\newif\ifpdf
\newcommand{\fhat}[1]{\widehat{#1}}
\newcommand{\Z}{\mathbb{Z}}
\newcommand{\spa}{{\rm span \,}}
\newcommand{\supp}{{\rm supp}\, }
\newcommand{\R}{\mathbb{R}}
\newcommand{\Q}{\mathbb{Q}}
\newcommand{\N}{\mathbb{N}}
\newcommand{\C}{\mathbb{C}}
\newcommand{\calS}{{\cal S}}
\newcommand{\vol}{ {\rm vol }\, }
\newcommand{\diag}{\text{diag}}
\newcommand{\LtR}{L^2(\mathbb{R}^d)}
\newtheorem{theorem}{Theorem}[section]
\newtheorem{lemma}[theorem]{Lemma}
\newtheorem{proposition}[theorem]{Proposition}
\newtheorem{definition}[theorem]{Definition}
\newtheorem{remark}[theorem]{Remark}
\newtheorem{example}[theorem]{Example}
\newtheorem{corollary}[theorem]{Corollary}
\title{A geometric construction of tight Gabor frames with multivariate compactly supported smooth windows}
\author{G\"otz E. Pfander, Peter Rashkov, Yang Wang}
\begin{document}

\maketitle

\begin{abstract}
The geometry of fundamental domains of lattices was used by Han and
Wang to construct multivariate Gabor frames for separable lattices. We
build upon their results to obtain Gabor frames with smooth and compactly
supported window functions. For this purpose we study pairs of lattices
which have equal density and allow for a common compact and star-shaped
fundamental domain. The results are then extended to a larger class of
lattices via symplectic equivalence.
 \footnote{ {\it Math Subject Classifications:}\  42C15, 42C40\\
{\it Keywords and phrases:} \ time-frequency lattices; lattice   tilings and
packings; symplectic equivalence. }
\end{abstract}

\section{Introduction}

Gabor systems are useful to construct discrete time-frequency
representations of signals. A Gabor system is customarily denoted as
$(g,\Lambda)=\{M_\omega T_x g:(x,\omega)\in\Lambda\}$, where the
square integrable function $g$ is the Gabor window, $\Lambda=M\Z^{2d}$,
$M$ full rank, a lattice  in the time-frequency space $\R^{2d}$, $T_x$ the
translation operator $(T_x f)(t) = f(t-x),\,x\in \R^d$, and $M_\omega$ the
modulation operator $(M_\omega f)(t)= e^{2\pi
i\langle\omega,y\rangle}f(t),\,\omega\in \R^d$. %These are based
%on the joint-time frequency representation known as the short-time
%Fourier (or Gabor) transform. %Especially when $g$ is chosen such
%that the description of time-frequency content of the signal $f$,
%given by $\langle f, M_\omega T_x g\rangle$ meets the requirements of
%the user.
A Gabor system is a tight frame for the space of square integrable functions
$L^2(\R^d)$ if up to a scalar factor, Parseval's identity holds. That is, for
some $c>0$, we have
\begin{equation}\label{eq:eqngab}
f=c \, \sum_{(x,\omega)\in\Lambda}\langle f, M_\omega T_x g   \rangle M_\omega T_x g , \quad f\in L^2(\R^d),
                                           \end{equation}
where the series converges unconditionally and  the computation of the representation coefficients is stable.
%This is achieved if the Gabor system $(g,\Lambda)$ is an orthonormal basis,
%a Riesz basis, or a frame for the space of square-integrable functions on $\R^d$, $L^2(\R^d)$. Moreover, the representation should be local: if $f$ is localized in time and frequency, then only few coefficients $c_{(x,\omega)}$ should be needed to represent $f$ well. For this to be the case, it is necessary and sufficient that $g$ is localized well in time and in frequency, which holds, for example, if $g$ is in the Schwarz class. %expansions with respect to a Gabor frame prove to
%be useful for a wide range of applications because the frame
%property assures unconditional convergence, implying good and robust
%reconstruction of signal from discrete data, such as

%as well as stable encoding of the signal into discrete coefficients.
%Most practical applications are based on full-rank lattices
%$\Lambda$.

Necessary and sufficient conditions for the Gabor system $(g,\Lambda)$ to
be a frame for $L^2(\R^d)$ have a long history in applied harmonic analysis.
A classical statement is the density condition, which states that any Gabor
system whose lattice $\Lambda=M\Z^{2d}$ has density $d(\Lambda)=1
/|\det M| <1$  is not complete and therefore not a Gabor frame for
$L^2(\R^d)$ \cite{Dau90,Dau92,Jan94}. Recently Bekka showed that the
density condition is sufficient as well: for each $\Lambda$ with
$d(\Lambda)\ge1$ there exists a function $g\in L^2(\R^d)$ such that
$(g,\Lambda)$ is a frame for $L^2(\R^d)$ \cite{Bek04}. In his fundamental
paper, Bekka proves existence only and his work reveals nothing about the
window besides membership in $L^2(\R^d)$. Multivariate Gabor frames are
constructed more explicitly for separable lattices in \cite{HW01,HW04},
however, the obtained Gabor windows are characteristic functions on sets
that are fundamental domains for pairs of lattices. These fundamental
domains may well be unbounded; in this case the constructed Gabor
windows decay neither in time nor in frequency, so \eqref{eq:eqngab} is not
local. For any lattice $\Lambda$, the existence of so-called multi-window
Gabor frames with windows in the Schwarz space or in modulation spaces
has been shown in \cite{Rie88,Lue05}, but the number of windows needed
does not follow from their analysis, for example, it is not clear whether
$d(\Lambda)>1$ implies that a single window in the Schwarz space suffices.

In this paper, we further explore the results of Han and Wang and use our
findings to construct tight Gabor frames $(g,\Lambda)$ in $L^2(\R^2)$ for
separable lattices $\Lambda=\Lambda_1\times\Lambda_2\subset\R^4$ with
nonnegative, smooth and compactly supported windows, that is, $g\in
C_c^\infty(\R^2)$. To achieve this, we shall assume star-shapedness of the
common fundamental domain of a pair of lattices. Note that the so-called
Balian-Low theorem \cite{BHW98, GHHK03} implies that if $(g,\Lambda)$ is
a frame for $L^2(\R^d)$ with $g$ being in the Schwarz space $\calS(\R^d)$,
then $d(\Lambda)>1$. Hence,
 our analysis is limited to lattices with density greater than 1.

 %allows the construction of a Gabor frame with
%smooth window (Proposition~\ref{prop:rb-adjoint}). %for separable lattices $\Lambda\subset\R^2\times\fhat\R^2$.

The characterization of lattices $\Lambda$ so that $(g,\Lambda)$ is a frame
for $L^2(\R^d)$ for a fixed $g$ are rare in the literature
\cite{Lyu92,SW92,JS02,Jans}, and with the exception of \cite{GroGa, PRtr}
these concern the case $d=1$ only. In principle, characterizing  spanning
properties of Gabor frames for lattices $\Lambda$ of higher dimension
($\Lambda\subset\R^{2d},d\ge 2$) is much more intricate than in the
one-dimensional case.

The paper is organized as follows. Section~\ref{section:theory} recalls basic
facts from Gabor analysis and from the Fourier theory of translational tilings
that are used in this paper.  Section~\ref{section:charwind} contains results
from \cite{HW01,HW04} as well as minor extensions of these. The
construction of smooth and compactly supported Gabor windows for a class
of separable lattices is presented in Section~\ref{section:smoothwind}.
These results extend to some lower block-triangular matrices,
Section~\ref{section:examples} contains examples in the bivariate case, that
is, we present pairs of lattices in $\R^4$ which allow for a common
star-shaped fundamental domain.
%In our study we also employ the well-known result about symplectic
%equivalence of Gabor frames (Theorem~\ref{th:symplectic lattices})
%to .
 %In fact, this example seems to imply that for characteristic functions on
%$\R^2$ not only the size but the shape of the support seems to
%affect the spanning property of the Gabor system.

\section{Tools in time-frequency analysis}\label{section:theory}
%\subsection{Basic notation}

Throughout the text, all subsets of $\R^d$ are assumed to be Lebesgue
measurable. For $\Omega \subseteq \R^d$, we use $\Omega +x=\{y+x:\ y\in
\Omega\}$, $x\in \R^d$,  $\gamma \Omega =\{\gamma y:\ y\in \Omega\}$,
$\gamma
>0$, and $ \Omega_\epsilon =\{y+z:\ y\in \Omega, \ \|z\|_2<\epsilon\}$, $\epsilon
>0$, where $\|\ \|_2$ denotes the Euclidean norm on $\R^d$. The characteristic function on
$\Omega \subseteq \R^d$ is denoted by $\chi_\Omega$, that is, we have
$\chi_\Omega(x)=1$ if $x\in\Omega$ and $\chi_\Omega(x)=0$ else.

We use the standard notation for complex-valued functions on $\R^d$:
$\calS(\R^d)$ is the Schwarz space, and $C_c^\infty(\R^d)$ the space of
smooth and compactly supported  functions.

A \emph{time-frequency shift} is $ (\pi(\lambda)f)(t)=(M_\omega T_x
f)(t)=f(t-x)e^{2\pi i\langle \omega,t\rangle}$ for
$\lambda=(x,\omega)\in\R^{2d}$. The Fourier transformation used here is
$L^2(\R^d)$--normalized, that is, $\fhat f(\xi)=\int_{\R^d} f(y)e^{-2\pi i\xi\cdot
y}dy$ for $f$ integrable.

%A key tool in Gabor analysis is the short-time Fourier transform
%(STFT), also called `continuous Gabor transform' or `windowed
%Fourier transform', defined as
%\begin{equation}\label{def:STFT}
%V_g f(x, \omega) = \int_{\R^d} f(t)\overline{g(t-x)}e^{-2 \pi i
%\omega t} dt = \langle f, M_\omega T_x g\rangle = \langle \fhat f,
%T_\omega M_{-x} \fhat g\rangle
%\end{equation}
%A review of the important properties of STFT can be found for
%instance in \cite{G01}.

\begin{definition}
  \label{def:lattice} A  {lattice} $\Lambda$ in $\R^{2d}$ is a discrete subgroup of the additive group
$\R^{2d}$, that is $\Lambda=M\Z^{d}$, with $M$ being full-rank ($\det M\neq
0$). A separable lattice has the form $\Lambda=A\Z^d{\times} B\Z^d$.
%In line with the notation for the dual group of $\R^d$, we will sometimes write the lattice as a subset of
%$\R^d{\times}\fhat\R^d$, which is isomorphic to $\R^{2d}$.
The  {dual lattice} of $\Lambda=M\Z^{d}$ is $\Lambda^\bot=M^{-T}\Z^{2d}$,
and the
 {adjoint lattice} of $\Lambda=M\Z^{2d}$ is $\Lambda^\circ=\{\lambda\in
\R^d\times\fhat\R^d: \pi(\lambda)\pi(\mu)=\pi(\mu)\pi(\lambda)\text{ for all
}\mu\in\Lambda\}$. The  {volume} of the lattice $\Lambda=M\Z^d$ equals the
Lebesgue measure of $\R^{d}/\Lambda$, that is
$\vol\Lambda=m(\R^{d}/\Lambda)=|\det M|$, and the  {density} of
$\Lambda$ is $d(\Lambda)=(\vol\Lambda)^{-1}$.
\end{definition}
 We have
$d(\Lambda^\bot)=d(\Lambda^\circ)=1/ {d(\Lambda)}$, and the adjoint
$\Lambda^\circ$ of a separable lattice $\Lambda=A\Z^d{\times} B\Z^d$  is
$\Lambda^\circ=(B\Z^d)^\bot{\times}(A\Z^d)^\bot=(B^{-T}\Z^d){\times}(A^{-T}\Z^d)$,
where $M^{-T}$ denotes $(M^T)^{-1}$. Moreover, $(M^\circ)^\circ=M$ and
$(M^\perp)^\perp=M$ \cite{G01}.

\begin{definition}\label{def:Fundamental domain}
Let $\Omega\subset\R^d$ be measurable and let $\Lambda$ be a full rank
lattice. If $(\Omega+\lambda_1)\cap(\Omega+\lambda_2)$ is a null set for
$\lambda_1\neq \lambda_2, \lambda_1, \lambda_2\in\Lambda$, then
$\Omega$ is a packing set  for $\Lambda$. If in addition, $\R^d
=\cup_{\lambda\in\Lambda}(\Omega+\lambda)$, then $\Omega$ is a tiling
set (fundamental domain) for $\Lambda$. \end{definition}

Clearly, a measurable set $\Omega$ is a packing set  if and only if
$\sum_{\lambda\in\Lambda}\chi_\Omega(x-\lambda)\le 1$ for almost all
$x\in\R^d$ and a fundamental domain for $\Lambda$ if and only if
$\sum_{\lambda\in\Lambda}\chi_\Omega(x-\lambda)=1$ for almost all
$x\in\R^d$. Furthermore, if $\Omega$ is a packing set  and
$m(\Omega)=\vol\Lambda$, then $\Omega$ is a fundamental domain for
$\Lambda$. Moreover, any translate of a fundamental domain of $\Lambda$
is a fundamental domain of $\Lambda$.

If $\Omega$ is a star-shaped fundamental domain for $\Lambda$, that is,
there exists a point $N\in\Omega$ such that for all $Q\in \Omega$ the line
segment $\overrightarrow{NQ}$ is contained entirely within $\Omega$ (see
Figure~\ref{figure:dilation-omega}), then $\gamma (\Omega-Q)$ is a packing
set for $\Lambda$ for all $0<\gamma \leq 1$.

Fundamental domains for lattices can be
characterized with methods from Fourier analysis \cite{K01}. %We recall a duality result between
%fundamental domains of $\Lambda$ and exponentials on the dual lattice $\Lambda^\bot$.
\begin{lemma}\label{lemma:vanishing} Let $\Lambda$ be a lattice in $\R^{d}$ and $\Omega\subset\R^d$ be measurable.
$\Omega$ is a fundamental domain for $\Lambda$ if and only if $\fhat{\chi_\Omega}$ vanishes on
$\Lambda^\bot{\setminus}\{0\}$.
\end{lemma}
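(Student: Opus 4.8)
The plan is to convert the tiling condition into a statement about the Fourier coefficients of a single periodic function. As recorded just before the lemma, $\Omega$ is a fundamental domain for $\Lambda$ exactly when its periodization
\[
P(x)=\sum_{\lambda\in\Lambda}\chi_\Omega(x-\lambda)
\]
equals $1$ for almost every $x$. First I would reduce to the case $m(\Omega)<\infty$: if $m(\Omega)=\infty$ then $\Omega$ cannot be a fundamental domain of a full-rank lattice and $\widehat{\chi_\Omega}$ is not defined at the dual-lattice points, so both conditions fail. Under $m(\Omega)<\infty$ we have $\chi_\Omega\in L^1(\R^d)$, and $P$ is $\Lambda$-periodic and integrable over one cell.

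The heart of the argument is to identify the samples of $\widehat{\chi_\Omega}$ on $\Lambda^\bot$ with the Fourier coefficients of $P$. Writing $\Lambda=M\Z^d$, fixing the cell $F=M[0,1)^d$, and splitting $\R^d=\bigsqcup_{\lambda\in\Lambda}(F+\lambda)$, I would unfold the defining integral of $\widehat{\chi_\Omega}(\mu)$ and use the defining property $\langle\mu,\lambda\rangle\in\Z$ of $\mu\in\Lambda^\bot=M^{-T}\Z^d$, $\lambda\in\Lambda$, to obtain for every $\mu\in\Lambda^\bot$ the identity
\[
\widehat{\chi_\Omega}(\mu)=\int_{F}P(y)\,e^{-2\pi i\langle\mu,y\rangle}\,dy .
\]
Thus $\widehat{\chi_\Omega}(\mu)$ is, up to the factor $\vol\Lambda$, the $\mu$-th Fourier coefficient of $P$.

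Both implications then fall out. If $\Omega$ is a fundamental domain then $P\equiv 1$, and for $\mu\in\Lambda^\bot\setminus\{0\}$ the right-hand side becomes the integral of a nontrivial character over $F$, which is $0$; hence $\widehat{\chi_\Omega}$ vanishes on $\Lambda^\bot\setminus\{0\}$. Conversely, if all these samples vanish, then every nonzero Fourier coefficient of $P$ vanishes, so $P$ is almost everywhere equal to its mean $c_0=\widehat{\chi_\Omega}(0)/\vol\Lambda=m(\Omega)/\vol\Lambda$.

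The step I expect to be the real obstacle is closing this converse. Vanishing on $\Lambda^\bot\setminus\{0\}$ forces $P$ to be constant a.e., but, being integer-valued, this constant is some $k\in\Z_{\ge 0}$, i.e.\ $\Omega$ is a $k$-fold tiling; for instance $\Omega=[0,2)$ with $\Lambda=\Z$ has $\widehat{\chi_\Omega}$ vanishing on $\Z\setminus\{0\}$ while $P\equiv 2$. To force $k=1$ one must pin the mean, i.e.\ $m(\Omega)=\vol\Lambda$. I would therefore either read this normalization as part of the hypothesis on $\Omega$ (as is implicit when $\Omega$ is meant to carry the covolume of $\Lambda$) or add it explicitly; with $m(\Omega)=\vol\Lambda$ we get $c_0=1$, hence $P\equiv 1$, and $\Omega$ is a fundamental domain.
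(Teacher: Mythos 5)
Your proof is correct, but there is in fact no proof in the paper to compare it against: the lemma is stated without proof, with a citation to \cite{K01}, and the argument you give --- unfolding $\fhat{\chi_\Omega}(\mu)$, $\mu\in\Lambda^\bot$, over the cells $F+\lambda$ with $F=M[0,1)^d$, and identifying these samples with the Fourier coefficients of the periodization $P(x)=\sum_{\lambda\in\Lambda}\chi_\Omega(x-\lambda)$ --- is precisely the standard argument in that literature. The implication ``fundamental domain $\Rightarrow$ $\fhat{\chi_\Omega}$ vanishes on $\Lambda^\bot\setminus\{0\}$'' is complete as you wrote it, including the justification that the integral of a nontrivial character of $\R^d/\Lambda$ over $F$ vanishes.

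The more valuable part of your work is the observation that the converse, as literally printed, is false; this is a defect of the paper's statement, not of your proof. Your counterexample checks out: for $\Omega=[0,2)$ and $\Lambda=\Z$ one has $\fhat{\chi_\Omega}(n)=\int_0^2 e^{-2\pi i n t}\,dt=0$ for every $n\in\Z\setminus\{0\}$, yet $[0,2)$ is not a fundamental domain of $\Z$ (it tiles at level $2$). In general, vanishing of $\fhat{\chi_\Omega}$ on $\Lambda^\bot\setminus\{0\}$ only forces $P$ to equal the constant $m(\Omega)/\vol\Lambda$ almost everywhere, and since $P$ is integer-valued this constant is some $k\in\{0,1,2,\dots\}$; one gets $k=1$ exactly when $m(\Omega)=\vol\Lambda$. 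So the lemma needs the additional hypothesis $m(\Omega)=\vol\Lambda$ (equivalently $\fhat{\chi_\Omega}(0)=\vol\Lambda$, or alternatively the assumption that $\Omega$ is a packing set for $\Lambda$), which is exactly the repair you propose. This imprecision is harmless for the rest of the paper: the lemma is invoked only through its true direction, for instance in the orthogonality computation $\fhat{\chi_\Omega}(\ell'-\ell)=0$ at the end of the proof of Theorem~\ref{th:main}, where $\Omega$ is a fundamental domain of $B^{-T}\Z^d$ and $\ell'-\ell\in B\Z^d\setminus\{0\}=(B^{-T}\Z^d)^\bot\setminus\{0\}$.
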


Moreover, the following result is central to our analysis
\cite{Fu74,IKT03,KM06}.

\begin{theorem}
\label{th:Fug}  The set $\Omega$ is a fundamental domain of the lattice
$\Lambda$ in $\R^d$ if and only if the set of pure frequencies $\{e^{2\pi i
\lambda x}\}_{\lambda \in \Lambda^\perp}$ is an orthonormal basis for
$L^2(\Omega)$.
\end{theorem}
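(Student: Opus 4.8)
The plan is to prove both implications through the single Fourier-analytic identity that links the Gram structure of the exponential system to the values of $\fhat{\chi_\Omega}$ on the dual lattice, and then to supply completeness separately by periodization. First I would compute the pairwise inner products. For $\lambda,\mu\in\Lambda^\perp$, writing $e_\lambda(x)=e^{2\pi i\lambda\cdot x}$, the $L^2(\Omega)$ pairing is
\begin{equation}
\langle e_\lambda,e_\mu\rangle_{L^2(\Omega)}=\int_\Omega e^{2\pi i(\lambda-\mu)\cdot x}\,dx=\fhat{\chi_\Omega}(\mu-\lambda).
\end{equation}
Since $\mu-\lambda$ runs through all of $\Lambda^\perp$ as $\lambda,\mu$ vary, and $\fhat{\chi_\Omega}(0)=m(\Omega)=\vol\Lambda$, the system $\{e_\lambda\}_{\lambda\in\Lambda^\perp}$ is orthogonal (and, after normalization by $(\vol\Lambda)^{-1/2}$, orthonormal) precisely when $\fhat{\chi_\Omega}$ vanishes on $\Lambda^\perp\setminus\{0\}$. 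By Lemma~\ref{lemma:vanishing} this vanishing is equivalent to $\Omega$ being a fundamental domain for $\Lambda$. This already settles the direction $(\Leftarrow)$: if the exponentials form an orthonormal basis, they are in particular orthonormal, hence $\fhat{\chi_\Omega}$ vanishes on $\Lambda^\perp\setminus\{0\}$ and $\Omega$ is a fundamental domain.

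For the direction $(\Rightarrow)$ the orthonormality is provided by the same computation, so the only remaining task is completeness: every $f\in L^2(\Omega)$ must lie in the closed span of $\{e_\lambda\}_{\lambda\in\Lambda^\perp}$. Here I would use that, because $\Omega$ tiles $\R^d$ under $\Lambda$ up to a null set, the map $P$ sending $f\in L^2(\Omega)$ to its $\Lambda$-periodic extension is a well-defined unitary isomorphism from $L^2(\Omega)$ onto $L^2(\R^d/\Lambda)$. The quotient $\R^d/\Lambda$ is a compact abelian group isomorphic to the torus $\T^d$, whose dual group is exactly $\Lambda^\perp$; the associated characters are the functions $x\mapsto e^{2\pi i\lambda\cdot x}$ with $\lambda\in\Lambda^\perp$ (these are precisely the exponentials that are $\Lambda$-periodic), and by the classical theory of Fourier series on the torus they form an orthonormal basis of $L^2(\R^d/\Lambda)$. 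Transporting this basis back through $P^{-1}$ identifies it with $\{e_\lambda\}_{\lambda\in\Lambda^\perp}$ restricted to $\Omega$, which is therefore an orthonormal basis of $L^2(\Omega)$.

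The main obstacle is the completeness step rather than orthogonality, which is immediate from the displayed identity and Lemma~\ref{lemma:vanishing}. Two points there require care: verifying that $P$ is genuinely unitary (injectivity and surjectivity, as well as norm preservation, rest on the tiling identity $\sum_{\lambda\in\Lambda}\chi_\Omega(x-\lambda)=1$ for almost all $x$, which makes the periodic extension well defined and isometric), and identifying the characters of $\R^d/\Lambda$ with the lattice $\Lambda^\perp$ under the convention $\Lambda^\perp=M^{-T}\Z^d$. Alternatively, completeness can be obtained without invoking abstract harmonic analysis by first establishing it for a cube-type fundamental domain, where it reduces to ordinary multivariate Fourier series, and then transferring along the tiling; but the periodization argument is the most transparent for an arbitrary measurable fundamental domain.
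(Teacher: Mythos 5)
Your forward direction is fine: the Gram computation together with the periodization unitary $L^2(\Omega)\cong L^2(\R^d/\Lambda)$ is the standard argument, and your remark that the exponentials have norm $m(\Omega)^{1/2}$ (so ``orthonormal'' must be read as ``orthogonal,'' or the system rescaled) is apt. (For context: the paper does not prove this theorem at all, it cites \cite{Fu74,IKT03,KM06}, so your proposal stands on its own.) The converse direction, however, has a genuine gap: you deduce that $\Omega$ is a fundamental domain from orthonormality alone, and never use completeness. That implication is false. Take $d=1$, $\Lambda=\tfrac12\Z$, so $\Lambda^\perp=2\Z$, and $\Omega=[0,1)$. The system $\{e^{2\pi i(2n)x}\}_{n\in\Z}$ is literally orthonormal in $L^2[0,1)$, and $\fhat{\chi_\Omega}$ vanishes on $2\Z\setminus\{0\}$, yet $[0,1)$ is not a fundamental domain of $\tfrac12\Z$ (it is a $2$-fold tiling, not even a packing set). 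The same example shows that the ``if'' direction of Lemma~\ref{lemma:vanishing}, on which your argument leans, is not correct as literally stated: vanishing of $\fhat{\chi_\Omega}$ on $\Lambda^\perp\setminus\{0\}$ only says that the periodization $\sum_{\mu\in\Lambda}\chi_\Omega(\cdot-\mu)$ equals a.e.\ a constant, necessarily the integer $k=m(\Omega)/\vol\Lambda$; it forces $k=1$ only under the extra hypothesis $m(\Omega)=\vol\Lambda$. So any proof of $(\Leftarrow)$ must invoke the basis (completeness) hypothesis, which yours never does.

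The repair is short. Orthogonality gives, as above, that $\Omega$ is a $k$-fold tiling for some integer $k\ge1$ (note $0<m(\Omega)<\infty$ is forced by $e_\lambda\in L^2(\Omega)$). Suppose $k\ge2$. Then $\Omega$ is not a packing set, so there is $\lambda_0\in\Lambda\setminus\{0\}$ with $m\bigl(\Omega\cap(\Omega+\lambda_0)\bigr)>0$; pick $E\subseteq\Omega\cap(\Omega+\lambda_0)$ with $m(E)>0$ and diameter smaller than $\|\lambda_0\|_2$, so that $E$ and $E-\lambda_0$ are disjoint subsets of $\Omega$. Put $f=\chi_E-\chi_{E-\lambda_0}\neq0$. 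For every $\lambda\in\Lambda^\perp$,
\begin{equation}
\langle f,e_\lambda\rangle_{L^2(\Omega)}=\fhat f(\lambda)=\fhat{\chi_E}(\lambda)\,\bigl(1-e^{2\pi i\,\lambda\cdot\lambda_0}\bigr)=0,
\end{equation}
since $\lambda\cdot\lambda_0\in\Z$. Thus $f$ is a nonzero element of $L^2(\Omega)$ orthogonal to the whole system, contradicting completeness; hence $k=1$ and $\Omega$ is a fundamental domain. (This is the same mechanism the paper itself uses in the proof of Proposition~\ref{lem:necessity}, where a positive-measure overlap $W=\Omega\cap(\Omega+x)$ is turned into a violation of the Riesz/basis property.) With this paragraph added, your proof is complete; without it, the converse as written proves a false statement.
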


We recall some important definitions and properties from the theory of
Gabor frames and Gabor Riesz basis  sequences.
%\begin{definition}\label{def:Gaborsys-d}
%Let $\Lambda\subset\R^{2d}$ be a discrete set. A Gabor system $(g,
%\Lambda)$ for $L^2(\R^d)$ is the set of all time-frequency shifts of
%the window function $g$ by $\lambda=(x,\omega)\in\Lambda$, i.e.
%\[(g,\Lambda):=\{g_{\lambda}: \lambda\in\Lambda\},\]
%for $g_{\lambda}(t)=\pi(\lambda)g(t)=M_\omega T_x g=g(t-x)e^{2\pi i
%\langle \omega, t\rangle}$
%\end{definition}
%A Gabor system is a frame if it satisfies the following condition:
\begin{definition}\label{def:frame}
%A Gabor system $(g,\Lambda)=\{M_\omega T_x g:(x,\omega)\in\Lambda\}$
%is a Bessel sequence, if there exists a constant $b>0$ such that
%\[\sum_{\lambda\in\Lambda}|\langle f,\pi(\lambda)g\rangle|^2\le b\|f\|^2\quad\text{for all }f\in \LtR.\]
A Gabor system $(g,\Lambda)$ is a frame for $L^2(\R^d)$ with
frame bounds $0<a\le b$ if
\begin{equation}\label{eq:frameineq}
a\|f\|_{L^2}^2\leq\sum_{\lambda\in\Lambda}|\langle
f,\pi(\lambda)g\rangle|^2\leq b\|f\|_{L^2}^2\quad\text{for all }f \in L^2(\R^d).
\end{equation}
The Gabor frame is tight if $a=b$ is possible.
A Gabor system $(g,\Lambda)$ is a Riesz basis for $L^2(\R^d)$
if there exist constants $0<a\le b$ such that
\begin{equation}\label{eq:RBineq}
a\|\mathbf{c}\|_{\ell^2}^2\leq\Big\|\sum_{\lambda\in\Lambda}c_\lambda
\pi(\lambda)g\Big\|_{L^2}^2\leq b\|\mathbf{c}\|_{\ell^2}^2\quad\text{for all }\mathbf{c}\in\ell^2(\Lambda),
\end{equation}
and $\overline{\spa(g,\Lambda)}=L^2(\R^d)$. A Gabor Riesz basis
sequence is a Riesz basis for the $L^2$-closure of its linear span.
\end{definition}
Clearly, if $(g,\Lambda)$ is a Riesz basis sequence with $\|g\|_{L^2}=1$
and $a=b=1$ in \eqref{eq:RBineq} then $(g,\Lambda)$ is is an orthonormal
sequence. Moreover, we shall use below that if $(g,\Lambda)$ is a tight
Gabor frame, then $a=b=d(\Lambda)\|g\|^2_{L^2}$ in  \eqref{eq:frameineq}
and $c=(d(\Lambda)\|g\|^2_{L^2})^{-1}$ in \eqref{eq:eqngab} (Theorem 5 in
\cite{BCHL06b}).

The usefulness of Gabor frames and tight Gabor frames stems from the
following result.
\begin{theorem}
Let $g\in L^2(\R^d)$ and let  $\Lambda$ be a full rank lattice. If
$(g,\Lambda)$ is a frame for $L^2(\R^d)$, then exists a so-called dual
window $\gamma \in L^2(\R^d)$ with
\begin{equation}\label{eq:framereconstruction}
  f=\sum_{\lambda\in\Lambda} \langle f,\pi(\lambda)\gamma \rangle \pi(\lambda) g
  =\sum_{\lambda\in\Lambda} \langle f,\pi(\lambda)g \rangle \pi(\lambda) \gamma,\quad f\in L^2(\R^d)\,.
\end{equation}

If $(g,\Lambda)$ is a tight frame for $L^2(\R^d)$, then we can choose in
\eqref{eq:framereconstruction}
\\ $\gamma=(d(\Lambda)\|g\|^2_{L^2})^{-1}\,g$, that is, \eqref{eq:eqngab}
holds with $c=(d(\Lambda)\|g\|^2_{L^2})^{-1}$.
\end{theorem}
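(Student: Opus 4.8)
The plan is to build the dual window from the \emph{frame operator} and exploit its commutation with time-frequency shifts. First I would introduce $S\colon L^2(\R^d)\to L^2(\R^d)$ given by $Sf=\sum_{\lambda\in\Lambda}\langle f,\pi(\lambda)g\rangle\,\pi(\lambda)g$. The frame inequality \eqref{eq:frameineq} states precisely that $a\,\|f\|_{L^2}^2\le\langle Sf,f\rangle\le b\,\|f\|_{L^2}^2$, so $S$ is a bounded, positive, self-adjoint operator satisfying $aI\le S\le bI$; in particular $S$ is boundedly invertible with $b^{-1}I\le S^{-1}\le a^{-1}I$. I would then set $\gamma=S^{-1}g$, which lies in $L^2(\R^d)$ by boundedness of $S^{-1}$, and propose this as the dual window.

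The key step, and the one I expect to be the main obstacle, is the identity $S\pi(\mu)=\pi(\mu)S$ for every $\mu\in\Lambda$. This rests on the composition law for time-frequency shifts: for $\lambda,\mu\in\R^{2d}$ one has $\pi(\mu)^*\pi(\lambda)=c(\mu,\lambda)\,\pi(\lambda-\mu)$ with a unimodular scalar $c(\mu,\lambda)$ arising from the symplectic phase. Writing $S\pi(\mu)f=\sum_{\lambda}\langle f,\pi(\mu)^*\pi(\lambda)g\rangle\,\pi(\lambda)g$ and reindexing by $\nu=\lambda-\mu$, which is legitimate because $\Lambda$ is a group so $\lambda\mapsto\lambda-\mu$ permutes $\Lambda$, the phases coming from $\pi(\mu)^*\pi(\lambda)$ and from $\pi(\nu+\mu)=c'\,\pi(\mu)\pi(\nu)$ collect into a single factor that cancels, leaving $\pi(\mu)Sf$. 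Care is needed only in tracking these unimodular cocycle factors; the unconditional convergence guaranteed by the frame property justifies the reindexing term by term.

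Since $S$ is self-adjoint and commutes with each $\pi(\mu)$, $\mu\in\Lambda$, so does $S^{-1}$. The reconstruction formulas then follow by inserting $SS^{-1}=S^{-1}S=I$: for $f\in L^2(\R^d)$,
\[
f=SS^{-1}f=\sum_{\lambda}\langle S^{-1}f,\pi(\lambda)g\rangle\,\pi(\lambda)g=\sum_{\lambda}\langle f,\pi(\lambda)S^{-1}g\rangle\,\pi(\lambda)g=\sum_{\lambda}\langle f,\pi(\lambda)\gamma\rangle\,\pi(\lambda)g,
\]
where I used self-adjointness of $S^{-1}$ and the commutation $S^{-1}\pi(\lambda)=\pi(\lambda)S^{-1}$. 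Applying $S^{-1}$ to the identity $Sf=f$ instead yields the companion expansion $f=\sum_{\lambda}\langle f,\pi(\lambda)g\rangle\,\pi(\lambda)\gamma$, which together establish \eqref{eq:framereconstruction}.

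Finally, for the tight case $a=b$ the frame inequality forces $\langle Sf,f\rangle=a\,\|f\|_{L^2}^2$ for all $f$, whence $S=aI$ since the quadratic form of the self-adjoint operator $S-aI$ vanishes. By the remark preceding the theorem, $a=d(\Lambda)\|g\|_{L^2}^2$, so $S^{-1}=(d(\Lambda)\|g\|_{L^2}^2)^{-1}I$ and therefore $\gamma=S^{-1}g=(d(\Lambda)\|g\|_{L^2}^2)^{-1}g$. Substituting this choice of $\gamma$ into \eqref{eq:framereconstruction} reduces it to \eqref{eq:eqngab} with $c=(d(\Lambda)\|g\|_{L^2}^2)^{-1}$, as claimed.
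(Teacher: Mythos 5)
Your proof is correct and is essentially the argument the paper relies on: the paper states this theorem without a self-contained proof, deferring to its references, and its appendix identifies the dual window as the canonical one, $\gamma=S_{g,\Lambda}^{-1}g$, which is precisely the frame-operator construction you carry out, including the commutation $S\pi(\mu)=\pi(\mu)S$ for $\mu\in\Lambda$ (your cocycle phases do cancel) and the tight-case identification $S=aI$ with $a=d(\Lambda)\|g\|_{L^2}^2$ taken from the remark preceding the theorem. One wording slip, not a gap: ``applying $S^{-1}$ to the identity $Sf=f$'' should read ``writing $f=S^{-1}Sf$ and expanding $Sf$ as the frame series before pushing the bounded operator $S^{-1}$ through term by term.''
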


The benefit of having compactly supported and smooth $g$ in
\eqref{eq:eqngab} and \eqref{eq:framereconstruction}   is clear.  Only then,
we can guarantee that for any $r\in \N$ and any compactly supported
$r$-times differentiable function $f$, there exists $N_f,C_f >0$ with  $\langle
f,\pi(x,\omega) g \rangle =0$ whenever $\|x\|_2\geq N_f$, and $|\langle
f,\pi(x,\omega) g \rangle|\leq C |\omega|^{-r}$. This property also allows for
efficient quantization of the expansion coefficients in \eqref{eq:eqngab} and
\eqref{eq:framereconstruction} \cite{Yil03}.

For a detailed discussion of Bessel sequences, Gabor frames and Riesz
basis sequences, we refer to \cite{G01,C03}.
%
%Han and Wang observed that if $\Omega$ is a common fundamental
%domain for the pair of lattices $\Lambda_1,\Lambda_2$, then
%$(\chi_\Omega,\Lambda_1\times\Lambda_2^\perp)$ is a Gabor orthonormal
%basis for $\LtR$ \cite{HW01,HW04}. They extended this construction also to
%the case of Gabor frames.\footnote{this paragraph appears to be
%misplaced!}

The following results are central in the theory of Gabor frames
 \cite{RS97,FG97,FZ98}.
\begin{theorem}\label{th:ronshen}
Let $g\in L^2(\R^d)$ and let $\Lambda$ be a full rank lattice. Then
$(g,\Lambda)$ is a frame for $L^2(\R^d)$ if and only if $(g,\Lambda^\circ)$
is a Riesz sequence. Moreover, $(g,\Lambda)$ is a tight frame for
$L^2(\R^d)$ if and only if $(g,\Lambda^\circ)$ is an orthonormal sequence.
\end{theorem}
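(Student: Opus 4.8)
The plan is to recast both conditions as spectral statements about a single operator and then link them through the adjoint lattice. Write $C_\Lambda f=(\langle f,\pi(\lambda)g\rangle)_{\lambda\in\Lambda}$ for the analysis operator and $S_{g,\Lambda}=C_\Lambda^*C_\Lambda=\sum_{\lambda\in\Lambda}\langle\,\cdot\,,\pi(\lambda)g\rangle\,\pi(\lambda)g$ for the frame operator. Comparing with \eqref{eq:frameineq}, $(g,\Lambda)$ is a frame with bounds $a\le b$ exactly when $aI\le S_{g,\Lambda}\le bI$, i.e. when $S_{g,\Lambda}$ is a bounded, boundedly invertible, positive operator on $L^2(\R^d)$. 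On the other side, let $D_{\Lambda^\circ}\mathbf c=\sum_{\lambda^\circ\in\Lambda^\circ}c_{\lambda^\circ}\,\pi(\lambda^\circ)g$ be the synthesis operator of the adjoint system and $G_{g,\Lambda^\circ}=D_{\Lambda^\circ}^*D_{\Lambda^\circ}$ its Gram operator on $\ell^2(\Lambda^\circ)$, whose matrix entries are $\langle\pi(\mu^\circ)g,\pi(\lambda^\circ)g\rangle$. Comparing with \eqref{eq:RBineq}, $(g,\Lambda^\circ)$ is a Riesz sequence with bounds $a\le b$ exactly when $aI\le G_{g,\Lambda^\circ}\le bI$ on $\ell^2(\Lambda^\circ)$.

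The crux is that $S_{g,\Lambda}$ and $G_{g,\Lambda^\circ}$ are two faces of the same algebraic object. A direct computation shows that $S_{g,\Lambda}$ commutes with $\pi(\lambda)$ for every $\lambda\in\Lambda$, so it lies in the commutant of the projective representation of $\Lambda$. By the definition of $\Lambda^\circ$ and the commutation theorem for lattice time-frequency shifts, this commutant is precisely the von Neumann algebra generated by $\{\pi(\lambda^\circ):\lambda^\circ\in\Lambda^\circ\}$, and one obtains Janssen's representation $S_{g,\Lambda}=\tfrac1{\vol\Lambda}\sum_{\lambda^\circ\in\Lambda^\circ}\langle g,\pi(\lambda^\circ)g\rangle\,\pi(\lambda^\circ)$. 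The Gram operator $G_{g,\Lambda^\circ}$ is the image of the very same element of that algebra under its left regular (twisted convolution) representation on $\ell^2(\Lambda^\circ)$, since, up to unimodular phases, its entries are the same numbers $\langle g,\pi(\lambda^\circ-\mu^\circ)g\rangle$. As both representations are faithful and normal, they preserve spectra, so the spectra of $S_{g,\Lambda}$ and $G_{g,\Lambda^\circ}$ agree up to the fixed positive factor $\vol\Lambda$. In particular $S_{g,\Lambda}$ is bounded and boundedly invertible if and only if $G_{g,\Lambda^\circ}$ is, which is the asserted equivalence between the frame property of $(g,\Lambda)$ and the Riesz sequence property of $(g,\Lambda^\circ)$.

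For the second assertion I would specialize to equality of bounds. A frame is tight, $a=b=c$, exactly when $S_{g,\Lambda}=cI$. Under the identification above this forces every off-diagonal Janssen coefficient to vanish, $\langle g,\pi(\lambda^\circ)g\rangle=0$ for $\lambda^\circ\ne0$, while the term $\lambda^\circ=0$ pins down $c=d(\Lambda)\|g\|_{L^2}^2$, in agreement with the normalization recorded after Definition~\ref{def:frame}. Vanishing of the off-diagonal Gram entries says exactly that $\{\pi(\lambda^\circ)g\}_{\lambda^\circ\in\Lambda^\circ}$ is orthogonal with constant norm $\|g\|_{L^2}$, i.e. orthonormal once $\|g\|_{L^2}=1$; equivalently $G_{g,\Lambda^\circ}=I$. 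The converse is immediate: orthonormality gives $G_{g,\Lambda^\circ}=I$, hence $S_{g,\Lambda}=d(\Lambda)I$, a tight frame.

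The delicate point, and the one I expect to cost the most work, is justifying this identification for an \emph{arbitrary} window $g\in L^2(\R^d)$: the Janssen series need not converge absolutely, so it cannot be manipulated termwise. The clean remedy is to bypass the series and argue entirely at the level of von Neumann algebras, invoking the commutation theorem (in the spirit of Rieffel and of Daubechies--H.~Landau--Z.~Landau) to identify the two algebras and transport spectral data; alternatively, one proves the operator identity first for a dense class of well-localized windows, for which absolute convergence holds, and then passes to the limit while checking that the frame bounds and the Riesz bounds vary continuously with $g$. It is this analytic control in the limit, rather than the algebraic bookkeeping, that is the real obstacle.
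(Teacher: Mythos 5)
The paper offers no proof of Theorem~\ref{th:ronshen} to compare against: it is quoted as a known result (Ron--Shen/Wexler--Raz duality) with references to \cite{RS97,FG97,FZ98}, so your proposal must be judged as a reconstruction of one of the literature proofs. What you sketch is indeed the Janssen/Rieffel/Daubechies--Landau--Landau route, and the algebraic skeleton is correct: the frame operator lies in the commutant of $\{\pi(\lambda)\}_{\lambda\in\Lambda}$, the Gram operator of $(g,\Lambda^\circ)$ is the twisted left-regular image of the same Janssen symbol, tightness corresponds to vanishing of the off-diagonal coefficients $\langle g,\pi(\lambda^\circ)g\rangle$, and the constant $d(\Lambda)\|g\|_{L^2}^2$ comes out right.

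The genuine gap is exactly where you place it, but it is larger than your closing paragraph suggests, and one of your two proposed remedies would fail. For arbitrary $g\in L^2(\R^d)$ no Bessel hypothesis is available in either direction of the equivalence, so $S_{g,\Lambda}$ may be unbounded and the Janssen series need not converge in any operator topology; until that is addressed, ``$S_{g,\Lambda}$ and $G_{g,\Lambda^\circ}$ are two faces of the same element'' is not yet a statement. Your second remedy --- prove the identity for well-localized windows and pass to the limit --- is not viable: the approximation would have to go from a given frame window $g$ to nice windows $g_n\to g$ generating frames with uniform bounds, and the Balian--Low theorem (see \cite{BHW98,GHHK03}) forbids this outright at critical density. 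For example $\Lambda=\Z^{2d}$ has $\Lambda^\circ=\Z^{2d}$ and $g=\chi_{[0,1]^d}$ generates an orthonormal basis, yet no Schwartz or $C_c^\infty$ window generates a frame for $\Z^{2d}$ at all, so the dense class on which your identity holds contains no frames to take limits of; relatedly, the fundamental identity of Gabor analysis is known not to extend to arbitrary $L^2$ data by naive limiting. That leaves your first remedy, which is the proof that actually works, but invoking ``the commutation theorem'' names the theorem rather than proves it: establishing that the commutant of the $\Lambda$-shifts is the von Neumann algebra generated by the $\Lambda^\circ$-shifts, that the two representations in play are faithful and normal so that spectra (hence bounded invertibility) transfer, and doing all of this while the frame operator is only densely defined, is essentially the entire content of the Daubechies--H.~Landau--Z.~Landau paper. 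So the proposal correctly identifies the strategy and locates the obstacle, but it does not constitute a proof.
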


Gabor frame theory is rooted in the representation theory of the
Weyl-Heisenberg group, a fact that we shall exploit in
Sections~\ref{section:charwind} and \ref{section:smoothwind}
\cite{Fol89,G01}. In particular, we shall use so-called metaplectic operators
which are discussed below.

\begin{definition}
  The symplectic group ${\rm Sp}(d)$ is the subgroup of $GL(2d,\R)$ whose members $\left(\begin{smallmatrix}
     A & C \\
     D & B \\
   \end{smallmatrix}
 \right)$ are characterized by $AD^T=A^TD$, $BC^T=B^TC$ and $A^TB-D^TC=I$.
 A  lattice $\Lambda=M\Z^{2d}$ with $M\in{\rm Sp}(d)$ is called symplectic lattice.
\end{definition}

\begin{theorem}\label{th:metaplectic}
  For $M\in {\rm Sp}(d)$ exists a unitary operator  $\mu(M)$ on $L^2(\R^d)$, a so-called metaplectic operator, with
  $\pi(M \lambda)=\mu(M)^\ast \pi(\lambda) \mu(M)$, $\lambda\in\R^{2d}$.
\end{theorem}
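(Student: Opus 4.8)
The plan is to deduce the result from the Stone--von Neumann theorem on the uniqueness of irreducible unitary representations of the Heisenberg group $\HW$ with a prescribed central character. First I would record the Weyl commutation relations for the time-frequency shifts, which follow by a direct computation from the definitions of $T_x$ and $M_\omega$:
\[
\pi(\lambda)\pi(\mu) = e^{2\pi i \sigma(\lambda,\mu)}\,\pi(\mu)\pi(\lambda), \qquad \lambda,\mu\in\R^{2d},
\]
where $\sigma(\lambda,\mu)=\langle\omega,x'\rangle-\langle\omega',x\rangle$ is the standard symplectic form for $\lambda=(x,\omega)$ and $\mu=(x',\omega')$. Equivalently, $\lambda\mapsto\pi(\lambda)$ is the Schr\"odinger model of an irreducible unitary representation of $\HW$ whose restriction to the center is the character $t\mapsto e^{2\pi i t}$; the irreducibility is classical.

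Second, I would introduce the twisted map $\pi_M(\lambda):=\pi(M\lambda)$ and check that it is again a unitary representation of $\HW$ carrying the \emph{same} central character. The underlying group law is inherited because $M$ is linear, and the commutation cocycle is preserved precisely because $M$ is symplectic: the defining relations $AD^T=A^TD$, $BC^T=B^TC$, $A^TB-D^TC=I$ are equivalent to $\sigma(M\lambda,M\mu)=\sigma(\lambda,\mu)$, whence
\[
\pi_M(\lambda)\pi_M(\mu) = e^{2\pi i \sigma(M\lambda,M\mu)}\,\pi_M(\mu)\pi_M(\lambda) = e^{2\pi i \sigma(\lambda,\mu)}\,\pi_M(\mu)\pi_M(\lambda).
\]
Since $M$ is invertible, $\pi_M$ ranges over the same operators as $\pi$ and is therefore irreducible as well. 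This verification---that the symplectic conditions are exactly what preserves the commutation cocycle---is the crux of the argument and the only place where the hypothesis $M\in{\rm Sp}(d)$ enters.

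Finally, with $\pi$ and $\pi_M$ two irreducible unitary representations of $\HW$ sharing the central character $e^{2\pi i t}$, the Stone--von Neumann theorem furnishes a unitary intertwiner $U$ on $L^2(\R^d)$ with $\pi_M(\lambda)=U\,\pi(\lambda)\,U^\ast$ for every $\lambda$. Setting $\mu(M):=U^\ast$ then yields $\pi(M\lambda)=\mu(M)^\ast\,\pi(\lambda)\,\mu(M)$, as claimed; Schur's lemma moreover shows $\mu(M)$ is unique up to a unimodular scalar, although this refinement is not needed here. The main obstacle is conceptual rather than computational: one must set up the Heisenberg-group formalism and invoke Stone--von Neumann in the correct form. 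Once that machinery is in place, the symplectic hypothesis feeds in cleanly through the cocycle identity and existence is immediate.
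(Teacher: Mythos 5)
Your overall strategy---deduce the theorem from Stone--von Neumann---is the standard route, and it is essentially how the sources the paper cites for this statement proceed (\cite{Fol89,G01}; the paper itself states the theorem without proof). However, there is a genuine gap at exactly the step you single out as the crux. What you verify is that $M$ preserves the \emph{commutation relation}, i.e.\ the antisymmetrized form $\sigma$; what the argument needs is that $M$ preserves the full composition \emph{cocycle}. With the paper's convention $\pi(x,\omega)=M_\omega T_x$ one computes, for $\lambda=(x,\omega)$ and $\nu=(x',\omega')$,
\[
\pi(\lambda)\pi(\nu)=e^{-2\pi i\langle\omega',x\rangle}\,\pi(\lambda+\nu),
\]
and it is invariance of the cocycle $e^{-2\pi i\langle\omega',x\rangle}$---not merely of its antisymmetrization $\sigma$---that is required for $\pi_M=\pi\circ M$ to extend to a unitary representation of $\HW$ with the same group law and central character. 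Symplectic maps preserve $\sigma$ but not this cocycle: writing $M\lambda=((M\lambda)_x,(M\lambda)_\omega)$, exact invariance would force $\langle(M\nu)_\omega,(M\lambda)_x\rangle=\langle\omega',x\rangle$ for all $\lambda,\nu$, which fails for general $M\in{\rm Sp}(d)$. So $\pi$ and $\pi_M$ are projective representations with cohomologous but \emph{unequal} cocycles, and Stone--von Neumann does not apply in the form you invoke. In fact the phase-free conclusion is unattainable: for the symplectic matrix $M=\left(\begin{smallmatrix}0&-I\\I&0\end{smallmatrix}\right)$, so that $M(x,\omega)=(-\omega,x)$, a direct computation gives
\[
\calF^{\ast}\,\pi(x,\omega)\,\calF=e^{2\pi i\langle x,\omega\rangle}\,\pi(-\omega,x),
\]
and no choice of unitary $U$ can remove the $\lambda$-dependent phase, because exact intertwining $U^{\ast}\pi(\lambda)U=\pi(M\lambda)$ would force equality of the two cocycles, $e^{-2\pi i\langle\omega',x\rangle}=e^{2\pi i\langle x',\omega\rangle}$, which is false.

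The repair is small and standard: run your argument with the \emph{symmetric} time-frequency shifts $\rho(x,\omega)=e^{-\pi i\langle x,\omega\rangle}M_\omega T_x$, whose composition law $\rho(\lambda)\rho(\nu)=e^{\pi i\sigma(\lambda,\nu)}\rho(\lambda+\nu)$ involves only $\sigma$ and therefore \emph{is} ${\rm Sp}(d)$-invariant. Then $\rho\circ M$ genuinely is an irreducible unitary representation of $\HW$ with the same central character, Stone--von Neumann yields a unitary $\mu(M)$ with $\rho(M\lambda)=\mu(M)^{\ast}\rho(\lambda)\mu(M)$ exactly (for the matrix above this unitary is precisely $\calF$, and one checks $\calF^\ast\rho(x,\omega)\calF=\rho(-\omega,x)$ with no phase), and rewriting $\rho$ in terms of $\pi$ gives
\[
\pi(M\lambda)=\gamma(\lambda)\,\mu(M)^{\ast}\pi(\lambda)\mu(M),\qquad
\gamma(\lambda)=e^{\pi i\left(\langle(M\lambda)_x,(M\lambda)_\omega\rangle-\langle x,\omega\rangle\right)}.
\]
The covariance thus holds only up to the unimodular factor $\gamma(\lambda)$. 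This phase is exactly what the theorem as stated (and your write-up) silently suppresses; it is a known and harmless imprecision, since multiplying the operators $\pi(\lambda)$ by unimodular scalars affects none of the frame, tight-frame, Riesz, or orthonormality properties used in Theorem~\ref{th:symplectic lattices} and the rest of the paper---but a proof claiming the phase-free identity literally cannot succeed.
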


\begin{theorem}\label{th:symplectic lattices}
Let $\Lambda$ be a full rank lattice in $\R^{2d}$ and $M$ be a symplectic
matrix in $GL(\R,2d)$. Then the following are equivalent:
\begin{enumerate}
\item There exists $g\in L^2(\R^d)$, respectively $g\in\calS(\R^d)$, such
    that $(g,\Lambda)$ is a Gabor frame for $L^2(\R^d)$.
\item There exists $\tilde g\in L^2(\R^d)$, respectively $\tilde
    g\in\calS(\R^d)$, such that $(\tilde g,M\Lambda)$ is a Gabor frame for
    $L^2(\R^d)$.
\end{enumerate}
\end{theorem}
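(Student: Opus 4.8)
The plan is to transport a frame from one lattice to the other by conjugating the window with the metaplectic operator $\mu(M)$ furnished by Theorem~\ref{th:metaplectic}. Since ${\rm Sp}(d)$ is a group, $M^{-1}$ is again symplectic and $M^{-1}(M\Lambda)=\Lambda$, so it suffices to establish the implication $(1)\Rightarrow(2)$; the reverse implication then follows by applying the same argument to $M^{-1}$ in place of $M$ and to the lattice $M\Lambda$ in place of $\Lambda$.

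Assume $(g,\Lambda)$ is a frame with bounds $0<a\le b$, and set $\tilde g=\mu(M)^\ast g$. For $\lambda\in\Lambda$ the intertwining relation $\pi(M\lambda)=\mu(M)^\ast\pi(\lambda)\mu(M)$ together with $\mu(M)\mu(M)^\ast=I$ gives, for every $f\in L^2(\R^d)$,
\[
\langle f,\pi(M\lambda)\tilde g\rangle=\langle f,\mu(M)^\ast\pi(\lambda)\mu(M)\mu(M)^\ast g\rangle=\langle \mu(M)f,\pi(\lambda)g\rangle .
\]
Summing the squared moduli over $\lambda\in\Lambda$ (equivalently over all points of $M\Lambda$) and invoking the frame inequality \eqref{eq:frameineq} for $(g,\Lambda)$ applied to the vector $\mu(M)f$, I would obtain
\[
a\,\|\mu(M)f\|_{L^2}^2\le\sum_{\mu\in M\Lambda}|\langle f,\pi(\mu)\tilde g\rangle|^2\le b\,\|\mu(M)f\|_{L^2}^2 .
\]
Because $\mu(M)$ is unitary, $\|\mu(M)f\|_{L^2}=\|f\|_{L^2}$, and thus $(\tilde g,M\Lambda)$ is a frame for $L^2(\R^d)$ with the same bounds $a,b$.

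It remains to treat the refinement concerning the Schwarz space. The computation above is purely Hilbert-space theoretic and transfers verbatim, so the only additional point is that $\tilde g=\mu(M)^\ast g$ must lie in $\calS(\R^d)$ whenever $g$ does. This is exactly where I expect the main obstacle to sit: one must know that metaplectic operators preserve $\calS(\R^d)$. I would settle this by recalling that $\mu$ is generated by the Fourier transform, by multiplications with quadratic chirps $e^{\pm\pi i\langle Lx,x\rangle}$, and by unimodular linear changes of variables $f\mapsto |\det A|^{1/2}f(A^\top\,\cdot\,)$; each of these maps $\calS(\R^d)$ bijectively onto itself, hence so does any finite composition, and in particular $\mu(M)^\ast=\mu(M^{-1})$. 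With this, $\tilde g\in\calS(\R^d)$, and the symmetric argument for $M^{-1}$ closes the equivalence in both the $L^2$ and the Schwarz settings.
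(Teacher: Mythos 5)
Your proposal is correct and follows essentially the same route as the paper, which proves the theorem in one line by invoking Theorem~\ref{th:metaplectic}, transporting the window with the metaplectic operator, and noting that metaplectic operators restrict to $\calS(\R^d)$; you merely supply the details (the frame-bound computation, the reduction to one implication via the group structure of ${\rm Sp}(d)$, and the generator argument for Schwarz invariance, which the paper instead cites from \cite{G01}). Incidentally, your choice $\tilde g=\mu(M)^\ast g$ is the one consistent with the intertwining relation $\pi(M\lambda)=\mu(M)^\ast\pi(\lambda)\mu(M)$ as stated in Theorem~\ref{th:metaplectic}, whereas the paper writes $\tilde g=\mu(M)g$ (a harmless adjoint/convention discrepancy).
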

Theorem~\ref{th:symplectic lattices} follows from
Theorem~\ref{th:metaplectic} and the choice   $\tilde g=\mu(M)g$.  In fact,
all metaplectic operators restrict to $\calS(\R^d)$ \cite{G01}, but
unfortunately not to $C_c^\infty(\R^d)$ as discussed in the next paragraph. In
general, the spanning properties of the Gabor system $(g,\Lambda)$ are
transferred to the Gabor system $(\mu(M) g,M\Lambda)$ \cite{G01}. Thus,
we can replace the quantifier `a Gabor frame for $L^2(\R^d)$' by `Riesz
basis sequence' or `frame sequence'.

Note that ${\rm Sp}(d)$ is generated by matrices of the form
$\left(\begin{smallmatrix}
    0 & I \\
    -I & 0 \\
   \end{smallmatrix}
 \right), \left(\begin{smallmatrix}
     B & 0 \\
     0 & B^{-T} \\
   \end{smallmatrix}
 \right)$, $\det B\neq 0$, and $
\left(\begin{smallmatrix}
    I & 0 \\
    C & I \\
   \end{smallmatrix}
 \right)
$, $C$ - symmetric. The corresponding metaplectic operators are,
respecitively, the Fourier transform, the normalized dilation $f\mapsto |\det
B|^{1/d}\, f\circ B$, and the multiplication with a chirp $e^{\pi i \langle
x,Cx\rangle}$. Clearly, $\mu(M)$ restricts to $C_c^\infty(\R^d)$ if $M$ is
generated by matrices of the form $\left(\begin{smallmatrix}
     B & 0 \\
     0 & B^{-T} \\
   \end{smallmatrix}
 \right)$, and $
\left(\begin{smallmatrix}
    I & 0 \\
    C & I \\
   \end{smallmatrix}
 \right)$,  $C$ - symmetric, a fact that we shall exploit below.

\section{Characteristic functions as  Gabor frame windows}\label{section:charwind}

%In this section we review the literature on constructing Gabor
%frames on separable lattice $\Lambda=A\Z^d\times B\Z^d$.
%In the one-dimensional case, we refer to \cite{Jans} for a discussion
%of Gabor systems with windows characteristic functions on the unit interval.
Han and Wang construct Gabor systems with windows that are characteristic
functions on fundamental domains of pairs of lattices \cite{HW01,HW04}.
Their construction is based on  part 1 implies part 3 of the following result.

\begin{proposition}\label{lem:necessity}
  Let $\Omega\subseteq \R^d$ be a fundamental domain for $A \Z^d$. The following are equivalent.
  \begin{enumerate}
\item $\Omega$ is a packing set  for $B^{-T}\Z^d$;
\item $(\chi_\Omega, A\Z^d\times B\Z^d)$ is a frame for $L^2(\R^d)$;
\item $(\chi_\Omega, A\Z^d\times B\Z^d)$ is a tight frame for $L^2(\R^d)$
    with frame bound $1/|\det B|$;
\item  $(\chi_\Omega, B^{-T}\Z^d\times A^{-T}\Z^d)$ is a Riesz basis
    sequence;
\item  $((m(\Omega)^{-1/2}\, \chi_\Omega, B^{-T}\Z^d\times A^{-T}\Z^d)$ is
    an orthonormal sequence.
  \end{enumerate}
\end{proposition}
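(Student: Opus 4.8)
\section*{Proof proposal}

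The plan is to transport every statement to the adjoint lattice $\Lambda^\circ=B^{-T}\Z^d\times A^{-T}\Z^d$ and to lean on one structural observation about the modulates of $\chi_\Omega$. Since every translate $\Omega+x$ is again a fundamental domain for $A\Z^d$, and $(A\Z^d)^\bot=A^{-T}\Z^d$, Theorem~\ref{th:Fug} shows that for each fixed $x\in B^{-T}\Z^d$ the family $\{\pi(x,\omega)\chi_\Omega=M_\omega\chi_{\Omega+x}\}_{\omega\in A^{-T}\Z^d}$ is an orthogonal basis of the subspace $L^2(\Omega+x)$, each vector having common norm $\sqrt{m(\Omega)}$. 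Thus the whole system $(\chi_\Omega,B^{-T}\Z^d\times A^{-T}\Z^d)$ breaks into fibers indexed by $x$, each fiber an orthogonal basis of $L^2(\Omega+x)$, and the only possible coupling between distinct fibers comes from overlaps of the sets $\Omega+x$. I expect this single fact to drive the whole argument.

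First I would dispose of the frame statements by duality. By the computation of the adjoint of a separable lattice recorded after Definition~\ref{def:lattice}, the adjoint of $A\Z^d\times B\Z^d$ is precisely $B^{-T}\Z^d\times A^{-T}\Z^d$; applying Theorem~\ref{th:ronshen} to $\chi_\Omega$ and to its normalization $m(\Omega)^{-1/2}\chi_\Omega$ (rescaling the window does not affect tightness) yields $2\Leftrightarrow 4$ and $3\Leftrightarrow 5$, the normalization $m(\Omega)^{-1/2}$ absorbing the common norm of the fiber vectors and the frame bound being $d(\Lambda)\|\chi_\Omega\|_{L^2}^2=\tfrac{1}{|\det A||\det B|}\,m(\Omega)=1/|\det B|$ since $m(\Omega)=\vol(A\Z^d)=|\det A|$. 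It then remains to tie the packing condition to the adjoint side, that is, to prove $1\Rightarrow 5$, $5\Rightarrow 4$ and $4\Rightarrow 1$; together with the two equivalences above this closes the cycle $1\Leftrightarrow 4\Leftrightarrow 5$ and attaches $2$ and $3$.

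For $1\Rightarrow 5$, packing means the sets $\Omega+x$, $x\in B^{-T}\Z^d$, are pairwise disjoint up to null sets, so the fibers $L^2(\Omega+x)$ are mutually orthogonal; combined with the orthogonality inside each fiber this makes the full system orthogonal with constant norm $\sqrt{m(\Omega)}$, hence orthonormal after the normalization in $5$. The implication $5\Rightarrow 4$ is immediate, an orthonormal sequence being a Riesz sequence. The crux is $4\Rightarrow 1$, which I would argue by contradiction: if $\Omega$ fails to pack $B^{-T}\Z^d$, then $(\Omega+x_1)\cap(\Omega+x_2)$ has positive measure for some $x_1\neq x_2$, and any nonzero $h$ supported there lies in both $L^2(\Omega+x_1)$ and $L^2(\Omega+x_2)$. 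Expanding $h$ in the two fiber bases supplied by the structural fact gives two representations $h=\sum_\omega c_\omega M_\omega\chi_{\Omega+x_1}=\sum_\omega d_\omega M_\omega\chi_{\Omega+x_2}$ with nonzero $(c_\omega),(d_\omega)\in\ell^2$ indexed over the disjoint fibers $x_1\neq x_2$; subtracting produces a nontrivial $\ell^2$ combination of the vectors $\pi(\lambda)\chi_\Omega$ that vanishes, contradicting the lower Riesz bound in \eqref{eq:RBineq}. The main obstacle is exactly this step: converting a mere set overlap into a genuine $\ell^2$-linear dependence, which is what the per-fiber orthogonal bases from Theorem~\ref{th:Fug} make possible.
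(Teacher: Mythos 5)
Your proposal is correct, and its overall architecture coincides with the paper's: both reduce parts 2 and 3 to parts 4 and 5 via the adjoint lattice and Theorem~\ref{th:ronshen} (with the same computation of the tight frame bound $d(\Lambda)\|\chi_\Omega\|_{L^2}^2=m(\Omega)/(|\det A||\det B|)=1/|\det B|$), both obtain $1\Rightarrow 5$ from Theorem~\ref{th:Fug} applied fiberwise to the translates $\Omega+x$, $x\in B^{-T}\Z^d$, and both prove $4\Rightarrow 1$ by contradiction starting from an overlap of positive measure. Where you genuinely differ is in how that overlap is turned into a violation of the lower Riesz bound. The paper expands $\chi_W+\chi_{W-x}$, $W=\Omega\cap(\Omega+x)$, in the single fiber basis of $L^2(\Omega)$ and then builds the telescoping sequence $f_N=\sum_{k=1}^N(-1)^k(\chi_{W+kx}+\chi_{W+(k-1)x})=(-1)^N\chi_{W+Nx}-\chi_W$, whose $L^2$ norm stays bounded by $2m(W)$ while the $\ell^2$ norm of its synthesis coefficients grows like $N$, so the Riesz ratio degenerates as $N\to\infty$. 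You instead expand the single function $\chi_W$ in the two fiber bases of $L^2(\Omega+x_1)$ and $L^2(\Omega+x_2)$ and subtract, exhibiting an exact nontrivial $\ell^2$-linear dependence among the vectors $\pi(\lambda)\chi_\Omega$. Your version is shorter, avoids the telescoping bookkeeping, and yields something formally stronger: the system fails even $\ell^2$-linear independence, not merely a uniform lower bound. The one point you should make explicit is the convergence bookkeeping: each fiber expansion converges unconditionally in $L^2$, so the combined coefficient sequence (supported on the two fibers, hence in $\ell^2(\Lambda^\circ)$) synthesizes to $\chi_W-\chi_W=0$; applying the lower bound in \eqref{eq:RBineq} to finite sections and passing to the limit then gives $a\bigl(\|c\|_{\ell^2}^2+\|d\|_{\ell^2}^2\bigr)\le 0$, the desired contradiction.
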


Note that $\Omega$ being a fundamental domain for $A\Z^d$ and a packing
set for $B^{-T}\Z^d$ implies $|\det B^{-T}| \geq |\det A|$ and for
$\Lambda=A\Z^d\times B\Z^d$, we then have $d(\Lambda)\geq1$ and for
$\Lambda=B^{-T}\Z^d\times A^{-T}\Z^d$, we have $d(\Lambda)\leq1$.
\begin{proof}
We shall  show that  part 1 implies part 5,  and  part 4 implies part 1. Clearly,
part 3 implies part 2 and part 5 implies part 4. The equivalence of parts 2
and 4 and parts 3 and 5 follows from Theorem~\ref{th:ronshen}. The explicit
frame bound in part 3 follows from $\|\chi_\Omega\|^2_{L^2}=|\det A|$ and
$d(A\Z^d\times B\Z^d)=1/|\det A\, \det B|$ \cite{BCHL06b}.

Note that $\Omega$ being a packing set  for $B^{-T}\Z^d$ implies
$$
\langle \pi(x,\omega) \chi_\Omega,
             \pi(x',\omega') \chi_\Omega\rangle =0, \quad(x,\omega),(x',\omega')
             \in B^{-T}\Z^d\times A^{-T}\Z^d,\text{ with } x\neq x'\,.
$$
Moreover, the family $\{M_{\omega}\chi_{\Omega + x}: \ \omega \in A^{-T}
\Z^n\}$ is an orthogonal basis for $L^2(\Omega+x)$, $x\in\R^d$, by
Theorem~\ref{th:Fug}, so part 4 follows.

Now, if $\Omega$ is not a packing set  for $B^{-T}\Z^d$, then there exists
$x\in B^{-T}\Z^d$, $W=\Omega\cap (\Omega+x)$ with $m(W)>0$.  Note that
$W-x= (\Omega-x)\cap \Omega\subseteq \Omega$. As $\Omega$ is a
fundamental domain for $A\Z^d$, there exists $\{c_\omega \}_{\omega \in
A^{-T}\Z^d }$ with
$$
\chi_W + \chi_{W-x}= \sum_{\omega\in A^{-T}\Z^d } c_\omega M_\omega \chi_\Omega\,.
$$
For $N\in\N$, consider
\begin{eqnarray}
  f_N &=& \sum_{k=1}^{N}\sum_{\omega\in A^{-T}\Z^d } (-1)^k c_\omega T_{kx}M_\omega  \chi_{\Omega}\notag \\
   &=& \sum_{k=1}^{N} (-1)^k (\chi_{W+kx}+  \chi_{W+(k-1)x} )= (-1)^N\chi_{W+Nx} -\chi_{W} \notag .
\end{eqnarray}
Clearly, $\|f_N\|^2_{L^2}\leq 2m( W)$, but the coefficients
$\{d_{x,\omega}\}_{(x,\omega)\in B^{-T}\Z^d\times A^{-T} \Z^d}$ of the Gabor
expansion of $f_N$ in terms of $(\chi_\Omega, B^{-T}\Z^d\times
A^{-T}\Z^d)$ satisfy
$$ \sum_{(x,\omega)\in B^{-T}\Z^d\times A^{-T}\Z^d }|d_{x,\omega}|^2
        =\sum_{k=1}^N \sum_{\omega \in A^{-T}\Z^d } |c_\omega|^2 =
        N \sum_{\omega \in A^{-T}\Z^d } |c_\omega|^2 \geq N m(W). $$

This implies that $(\chi_\Omega, B^{-T}\Z^d\times A^{-T}\Z^d)$ is not a Riesz
basis sequence.

%For $l_1\neq l_2$ in $\Z^d$ we have
%\[\langle M_{Bl_1}\chi_\Omega, M_{Bl_2}\chi_\Omega\rangle=\int_\Omega e^{2\pi i\langle B(l_1-l_2),t\rangle}dt=\int_\Omega e^{2\pi i\langle l_1-l_2,B^Tt\rangle}dt,\quad n\in\Z^d\]

\end{proof}

Han and Wang's central result is the following.

\begin{theorem}\label{th:hanwang}
Let $A\Z^d$ and $B\Z^d$ be two full-rank lattices in $\R^d$, such that $|\det
A|=|\det B|$. Then there exists a measurable set $\Omega$ which is a
fundamental domain for both $A\Z^d$ and $B\Z^d$. If $|\det A|\ge|\det B|$,
then there exists a measurable set $\Omega$, which is a  packing set  for
$A\Z^d$ and a tiling set for $B\Z^d$.
\end{theorem}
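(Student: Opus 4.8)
The plan is to prove the more general second statement---given $|\det A|\ge|\det B|$, construct a measurable $\Omega$ that is a packing set for $A\Z^d$ and a tiling set for $B\Z^d$---and then to recover the first statement as the boundary case $|\det A|=|\det B|$. Indeed, in that case $m(\Omega)=|\det B|=|\det A|=\vol(A\Z^d)$, so a packing set for $A\Z^d$ of this measure is automatically a fundamental domain for $A\Z^d$ by the remark following Definition~\ref{def:Fundamental domain}; hence $\Omega$ is a common fundamental domain. Writing $\Lambda_1=A\Z^d$ for the sparser lattice ($v_1:=|\det A|$) and $\Lambda_2=B\Z^d$ for the denser lattice ($v_2:=|\det B|\le v_1$), the goal becomes to find $\Omega$ whose reduction modulo $\Lambda_2$ is a.e.\ bijective and whose reduction modulo $\Lambda_1$ is a.e.\ injective.

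First I would start from an arbitrary bounded fundamental domain $F$ for $\Lambda_2$ and restrict every subsequent modification to $\Lambda_2$-translations of measurable pieces of the current set. Such moves permute the $\Lambda_2$-cosets within themselves, so the modified set always remains a tiling set for $\Lambda_2$; only the $\Lambda_1$-packing defect must be removed. To track this defect I would use the $\Lambda_1$-periodic counting function $N(x)=\sum_{\lambda\in\Lambda_1}\chi_\Omega(x-\lambda)$ together with the nonnegative quantities $\int_{\R^d/\Lambda_1}(N-1)_+$ (overlap) and $\int_{\R^d/\Lambda_1}(1-N)_+$ (hole). Since $\int_{\R^d/\Lambda_1}N=m(\Omega)=v_2\le v_1$, these satisfy $\text{(hole)}-\text{(overlap)}=v_1-v_2\ge0$; in particular, whenever positive overlap is present there is a hole region $H=\{N=0\}$ of at least equal measure available to receive displaced mass.

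The central swapping step is as follows. If $\Omega$ does not yet pack $\Lambda_1$, there is $\lambda\in\Lambda_1\setminus\{0\}$ with $W=\Omega\cap(\Omega+\lambda)$ of positive measure, so $W$ and $W-\lambda$ both lie in $\Omega$ and are $\Lambda_1$-congruent. I would move a sub-piece $P\subseteq W-\lambda$ by a suitable $\mu\in\Lambda_2$ into the hole set $H$, replacing $\Omega$ by $(\Omega\setminus P)\cup(P+\mu)$: removing $P$ strictly lowers $N$ on cosets where $N\ge2$, while placing $P+\mu$ in $H$ creates no new overlap, so the overlap functional strictly decreases by $m(P)$. Iterating and passing to the limit should produce a $\Lambda_1$-packing, $\Lambda_2$-tiling set. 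As an independent consistency check on the end product one can verify the tiling property through the Fourier criterion of Lemma~\ref{lemma:vanishing} and the orthonormal-basis characterization of Theorem~\ref{th:Fug}.

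I expect the genuine difficulty to be exactly the convergence and measurability of this limiting set, together with the existence of an admissible translation $\mu$ at each step. Whether a $\Lambda_2$-translate of $P$ can be steered into $H$ depends on the closure $\overline{\pi_1(\Lambda_2)}\subseteq\R^d/\Lambda_1$: when $\Lambda_1+\Lambda_2$ is dense this closure is all of $\R^d/\Lambda_1$ and a positive-measure sub-piece can always be placed, whereas when $\Lambda_1$ and $\Lambda_2$ are commensurable (so that $\Lambda_3=\Lambda_1+\Lambda_2$ is again a lattice) the whole problem descends to the finite torus $\R^d/\Lambda_3$ and becomes a finite bipartite matching governed by Hall's condition. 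I would therefore organize the iteration as a transfinite exhaustion---or, in the commensurable case, reduce it to the finite Hall-type matching---to guarantee that the overlap is driven to zero in the limit and that the resulting set $\Omega$ is measurable.
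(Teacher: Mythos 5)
First, a point of reference: the paper never proves Theorem~\ref{th:hanwang} at all --- it is imported verbatim from \cite{HW01,HW04} --- so your attempt has to be measured against Han--Wang's original argument, whose entire technical substance is precisely the two steps your sketch postpones. What you set up correctly: the reduction of the first statement to the second (packing set of measure $\vol(A\Z^d)$ implies tiling set), and the key invariant that replacing a piece $P\subseteq\Omega$ by $P+\mu$, $\mu\in\Lambda_2$, keeps $\Omega$ a $\Lambda_2$-fundamental domain. The first genuine gap is that your case dichotomy --- $\Lambda_1+\Lambda_2$ dense versus $\Lambda_1,\Lambda_2$ commensurable --- is not exhaustive once $d\geq 2$. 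The closure $\overline{\Lambda_1+\Lambda_2}$ can be a proper, non-discrete closed subgroup of the form $V\oplus\Gamma$. Concretely, take $\Lambda_1=\Z^2$ and $\Lambda_2=\left(\begin{smallmatrix}2&\sqrt2\\0&1/2\end{smallmatrix}\right)\Z^2$: both have covolume $1$, they are not commensurable (their intersection is $2\Z\times\{0\}$, not full rank), and $\overline{\Lambda_1+\Lambda_2}=\R\times\tfrac12\Z$ is neither discrete nor dense. In this situation the translation action of $\Lambda_2$ on $\R^2/\Lambda_1$ is not ergodic; its ergodic components are the cosets of the closed subgroup $(\R\times\tfrac12\Z)/\Z^2$, so an overlap sitting in one component can only be repaired by a hole in that \emph{same} component, and your claim that ``a positive-measure sub-piece can always be placed'' fails. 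Your global balance ${\rm (hole)}-{\rm (overlap)}=v_1-v_2\geq 0$ is then insufficient: one needs the balance to hold \emph{fiberwise} over $\R^d/\overline{\Lambda_1+\Lambda_2}$, which is true but requires an extra lemma (via the tower identity $m(\R^d/\Lambda_i)=m\bigl(\R^d/\overline{\Lambda_1+\Lambda_2}\bigr)\,m\bigl(\overline{\Lambda_1+\Lambda_2}/\Lambda_i\bigr)$, valid for $i=1,2$), together with a Fubini/continuity argument producing a single $\mu\in\Lambda_2$ that moves positive mass into holes across a positive-measure set of fibers. None of this is in your proposal.

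The second gap is one you name yourself but do not close, and it is the heart of the theorem: convergence of the swapping iteration. A process that strictly decreases the overlap functional at each step need not drive it to zero --- the decrements may shrink geometrically --- so ``iterating and passing to the limit should produce'' a packing is a plan, not a proof; likewise ``transfinite exhaustion'' is problematic as stated, since uncountable unions of measurable sets need not be measurable. A workable repair is countable and greedy: at each step perform a move of at least half the supremal movable mass; since a move of mass $m(P)$ lowers the overlap by $m(P)$, the moved masses are summable, hence the sets $\Omega_n$ converge in $L^1$ to a measurable $\Omega_\infty$ which is still a $\Lambda_2$-tiling set, and if $\Omega_\infty$ had positive overlap, the fiberwise placement argument above would yield an admissible move of definite positive mass, which transfers to $\Omega_n$ for $n$ large and contradicts summability. (Alternatively, one can invoke the ergodic-theoretic equidecomposition theorem component by component, with measurable dependence on the component.) Until the intermediate-case geometry and this limit argument are supplied, your text identifies the right invariant and the two extreme regimes --- ergodic exchange when the sum is dense, Hall matching when it is a lattice --- but does not yet constitute a proof of Theorem~\ref{th:hanwang}.
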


The combination of Proposition~\ref{lem:necessity} and
Theorem~\ref{th:hanwang} provides us with the main result from
\cite{HW01,HW04}. The results were the first to prove the existence of
Gabor frame windows for any separable lattice with density greater than or
equal 1.

\begin{theorem}\quad\label{th:ONBexist}
\begin{enumerate}
 \item If $|\det A  \det B|=1$, then there  exists $\Omega\subseteq \R^d$
     such that $(\chi_\Omega,A\Z^d\times B\Z^d)$ is an orthonormal basis
     for $L^2(\R^{d})$.
 \item If $|\det A\det B|\ge 1$, then there exists $\Omega\subseteq \R^d$
     such that $(\chi_\Omega,A\Z^d\times B\Z^d)$ is an orthogonal
     sequence for $L^2(\R^d)$.
\item If $|\det A\det B|\le 1$, then there exists $\Omega\subseteq \R^d$
    such that $(\chi_\Omega,A\Z^d\times B\Z^d)$ is a tight Gabor frame for
    $L^2(\R^d)$ with frame bound $1/|\det B|$.
\end{enumerate}
\end{theorem}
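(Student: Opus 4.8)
The plan is to derive all three parts from a single template: choose $\Omega$ via the Han--Wang tiling/packing dichotomy of Theorem~\ref{th:hanwang}, and then read off the conclusion from the equivalences in Proposition~\ref{lem:necessity}. The only bookkeeping is to decide, in each case, which of the two lattices $A\Z^d$ and $B^{-T}\Z^d$ is assigned the tiling role and which the packing role, and to check that the hypothesis on $|\det A\det B|$ is exactly the volume inequality Theorem~\ref{th:hanwang} requires. Throughout I use $\vol(A\Z^d)=|\det A|$ and $\vol(B^{-T}\Z^d)=1/|\det B|$.

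For part 3, I would apply Theorem~\ref{th:hanwang} to the pair $A\Z^d$, $B^{-T}\Z^d$. The hypothesis $|\det A\det B|\le 1$ says precisely $\vol(B^{-T}\Z^d)\ge\vol(A\Z^d)$, so the second clause of Theorem~\ref{th:hanwang} produces an $\Omega$ that packs $B^{-T}\Z^d$ and tiles $A\Z^d$. Thus $\Omega$ is a fundamental domain for $A\Z^d$ and a packing set for $B^{-T}\Z^d$, which is exactly hypothesis (1) of Proposition~\ref{lem:necessity}; the implication (1)$\Rightarrow$(3) then gives that $(\chi_\Omega, A\Z^d\times B\Z^d)$ is a tight frame with bound $1/|\det B|$.

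For part 2 I would simply interchange the two roles. The hypothesis $|\det A\det B|\ge 1$ now reads $\vol(A\Z^d)\ge\vol(B^{-T}\Z^d)$, so Theorem~\ref{th:hanwang} yields an $\Omega$ that packs $A\Z^d$ and tiles $B^{-T}\Z^d$. To conclude, I apply Proposition~\ref{lem:necessity} to the relabelled lattices $\tilde A=B^{-T}$ and $\tilde B=A^{-T}$; using $(M^\perp)^\perp=M$ one has $\tilde A^{-T}\Z^d=B\Z^d$ and $\tilde B^{-T}\Z^d=A\Z^d$, so $\Omega$ being a fundamental domain for $\tilde A\Z^d=B^{-T}\Z^d$ and a packing set for $\tilde B^{-T}\Z^d=A\Z^d$ is again hypothesis (1). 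The implication (1)$\Rightarrow$(5) then shows that $m(\Omega)^{-1/2}\chi_\Omega$ over $\tilde B^{-T}\Z^d\times\tilde A^{-T}\Z^d=A\Z^d\times B\Z^d$ is orthonormal, i.e. $(\chi_\Omega,A\Z^d\times B\Z^d)$ is orthogonal.

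Part 1 combines the two. When $|\det A\det B|=1$ the lattices $A\Z^d$ and $B^{-T}\Z^d$ have equal volume, so the first clause of Theorem~\ref{th:hanwang} supplies a \emph{common} fundamental domain $\Omega$. Being a fundamental domain for both lattices, this single $\Omega$ satisfies hypothesis (1) of Proposition~\ref{lem:necessity} in both of the labellings used above; hence $(\chi_\Omega,A\Z^d\times B\Z^d)$ is at once a tight frame (so complete) and an orthogonal system, and a complete orthogonal system is an orthogonal basis. The main point requiring care is the dual-lattice bookkeeping just described; the only genuinely hard input---the existence of sets that simultaneously tile one lattice and pack another, or tile both---is entirely delegated to Theorem~\ref{th:hanwang}, so no estimate remains to be carried out here. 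I would also flag one cosmetic subtlety: since $\|\chi_\Omega\|_{L^2}^2=m(\Omega)=|\det A|$, the system is orthonormal only after the scaling $m(\Omega)^{-1/2}$, so ``orthonormal basis'' is to be read up to this normalization, in keeping with part (5) of Proposition~\ref{lem:necessity}.
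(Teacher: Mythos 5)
Your proof is correct and takes essentially the same route as the paper: parts 2 and 3 are read off by combining Theorem~\ref{th:hanwang} with Proposition~\ref{lem:necessity} (with the roles of the two lattices assigned exactly as you describe), and part 1 follows by combining completeness with the orthogonality of part 2 for a common fundamental domain. Your explicit dual-lattice relabelling $\tilde A=B^{-T}$, $\tilde B=A^{-T}$ and the normalization caveat $\|\chi_\Omega\|_{L^2}^2=|\det A|$ merely spell out bookkeeping that the paper's terse proof leaves implicit.
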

\begin{proof}
Parts 2 and 3 follow directly from Theorem~\ref{th:hanwang} and
Lemma~\ref{lem:necessity}. Part 1 follows from part 2 and the observation
that $\Omega$ tiles with respect to $A\Z^d$ and $B^{-T}\Z^d$, then
$(\chi_\Omega,A\Z^d\times B\Z^d)$ is complete.
\end{proof}

Proposition~\ref{lem:necessity} states that if  $\Omega\subseteq \R^d$ is a
fundamental domain for $A \Z^d$, respectively $B^{-T}\Z^d$, then
$\Omega$ must be a packing set  for  $B^{-T} \Z^d$, respectively $A \Z^d$,
in order for $(\chi_\Omega, A\Z^d\times B\Z^d)$ to be a frame, respectively
a Riesz basis  sequence. The interplay of the fundamental domain property
of one lattice  with the packing set  property with respect to a second lattice
is further illuminated by the following observation.

\begin{proposition} Let $\Omega\subseteq \R^d$.

\begin{enumerate}
\item If $(\chi_\Omega, A\Z^d\times B\Z^d)$ is a  frame for $L^2(\R^d)$,
    then  $\Omega$ contains a fundamental domain of $A\Z^d$.

  \item If $\Omega$ is a packing set  for $B^{-T} \Z^d$, then
      $(\chi_\Omega, A\Z^d\times B\Z^d)$ is a  frame for $L^2(\R^d)$ if and
      only if $\Omega$ contains a fundamental domain of $A\Z^d$.
\item If $\Omega$ is a packing set  for $B^{-T} \Z^d$, then $(\chi_\Omega,
    A\Z^d\times B\Z^d)$ is a tight frame for $L^2(\R^d)$ if and only if
    $\Omega$ is the union of $k\in\N$  disjoint fundamental domains of
    $A\Z^d$. The frame bound is then $k/|\det B|$.

\item If  $(\chi_\Omega, A\Z^d\times B\Z^d)$ is a  Riesz basis sequence,
    then  $\Omega$ contains a fundamental domain of $B^{-T}\Z^d$.

\item If $\Omega$ is a packing set  for $A \Z^d$, then $(\chi_\Omega,
    A\Z^d\times B\Z^d)$ is a  Riesz basis sequence if and only if
    $\Omega$ contains a fundamental domain of $B^{-T}\Z^d$.
\item  If $\Omega$ is a packing set  for $A \Z^d$, then $(\chi_\Omega,
    A\Z^d\times B\Z^d)$ is an orthogonal sequence if and only if $\Omega$
    is the union of a finite number of fundamental domains of $B^{-T}\Z^d$.
  \end{enumerate}
\end{proposition}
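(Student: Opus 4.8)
The plan is to reduce everything to one computation of the frame operator of $(\chi_\Omega, A\Z^d\times B\Z^d)$ under the packing hypothesis, and then to obtain the Riesz/orthogonal statements by dualizing with Theorem~\ref{th:ronshen}. The engine is the following identity: whenever $\Omega$ is a packing set for $B^{-T}\Z^d$, one has, for every $f\in L^2(\R^d)$,
\[
\sum_{(x,\omega)\in A\Z^d\times B\Z^d}|\langle f,\pi(x,\omega)\chi_\Omega\rangle|^2 = |\det B|^{-1}\int_{\R^d}|f(t)|^2\,G_0(t)\,dt,\qquad G_0(t):=\sum_{n\in A\Z^d}\chi_\Omega(t-n).
\]
To prove this I would fix $x\in A\Z^d$, recognize $\langle f,M_\omega T_x\chi_\Omega\rangle$ as the Fourier coefficient at $\omega\in B\Z^d$ of $f\chi_{\Omega+x}$, and apply Parseval on the torus $\R^d/B^{-T}\Z^d$ to rewrite $\sum_{\omega\in B\Z^d}|\langle f,M_\omega T_x\chi_\Omega\rangle|^2$ as $|\det B|^{-1}$ times the integral of the squared $B^{-T}\Z^d$-periodization of $f\chi_{\Omega+x}$. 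Here the packing hypothesis is decisive: since $\Omega+x$ meets each coset $t+B^{-T}\Z^d$ at most once, the periodization has no cross terms and collapses to $\int_{\Omega+x}|f|^2$. Summing over $x\in A\Z^d$ unfolds the $A\Z^d$-translates of $\Omega$ into the multiplicity function $G_0$. In other words, under the packing hypothesis the frame operator is multiplication by $|\det B|^{-1}G_0$, and $G_0(t)$ is the integer counting how many $A\Z^d$-translates of $\Omega$ cover $t$.

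Part 1 does not assume packing, and I would argue it by completeness. If the $A\Z^d$-translates of $\Omega$ fail to cover $\R^d$ up to a null set, pick $f\neq 0$ supported in the uncovered set; then $\supp T_x\chi_\Omega=\Omega+x$ misses $\supp f$ for every $x\in A\Z^d$, so all coefficients $\langle f,\pi(x,\omega)\chi_\Omega\rangle$ vanish and the lower frame bound fails. Hence the translates cover, which is exactly the statement that $\Omega$ contains a fundamental domain of $A\Z^d$ (select one covering translate over each point of $\R^d/A\Z^d$). Parts 2 and 3 are then read off the identity: the frame bounds equal $|\det B|^{-1}\essinf G_0$ and $|\det B|^{-1}\esssup G_0$, so $(\chi_\Omega,A\Z^d\times B\Z^d)$ is a frame iff $G_0\ge 1$ a.e., i.e. iff $\Omega$ contains a fundamental domain of $A\Z^d$ (reproving Part 1 in the packing case); it is tight iff $G_0$ is a.e. a constant $k\in\N$, i.e. iff $\Omega$ is, up to null sets, a disjoint union of $k$ fundamental domains of $A\Z^d$, and then the bound is $k/|\det B|$.

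For Parts 4--6 I would invoke Theorem~\ref{th:ronshen} together with $(A\Z^d\times B\Z^d)^\circ=B^{-T}\Z^d\times A^{-T}\Z^d$ and the involutivity of the adjoint. Thus $(\chi_\Omega,A\Z^d\times B\Z^d)$ is a Riesz basis sequence iff $(\chi_\Omega,B^{-T}\Z^d\times A^{-T}\Z^d)$ is a frame, and it is an orthonormal sequence iff the latter is a tight frame. Applying Parts 1--3 to the adjoint lattice, i.e. replacing $(A,B)$ by $(B^{-T},A^{-T})$, turns ``packing set for $B^{-T}\Z^d$'' into ``packing set for $A\Z^d$'' and ``fundamental domain of $A\Z^d$'' into ``fundamental domain of $B^{-T}\Z^d$''. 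Part 4 is Part 1 for the adjoint, Part 5 is Part 2 for the adjoint, and Part 6 is Part 3 for the adjoint; in the last case the tight-frame condition yields a disjoint union of finitely many fundamental domains of $B^{-T}\Z^d$, and since every $\pi(\lambda)\chi_\Omega$ has the same norm $\sqrt{m(\Omega)}$, the resulting orthonormal sequence is exactly an orthogonal sequence in the sense of Part 6.

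The main obstacle is the upper (Bessel) bound in the backward directions of Parts 2 and 5: the identity gives the upper frame bound as $|\det B|^{-1}\esssup G_0$, which is finite only when $G_0\in L^\infty$. Containing a fundamental domain forces $G_0\ge 1$ but not boundedness, and a packing set for $B^{-T}\Z^d$ need only have finite \emph{measure}, so a priori $G_0$ could be unbounded. I would close this gap using that the relevant $\Omega$ are bounded, as in the rest of the paper: a bounded set meets only a uniformly bounded number of $A\Z^d$-translates, whence $G_0\in L^\infty$ automatically and the Bessel bound is in force. This boundedness point, rather than the algebra of the identity or the duality, is where the argument needs care.
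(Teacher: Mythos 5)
Your route is genuinely different from the paper's. The paper reduces everything, via Theorem~\ref{th:ronshen}, to parts 1, 5 and 6, and then treats the pure--modulation system $\{M_\omega\chi_\Omega\}_{\omega\in B\Z^d}$ by hand: for the ``only if'' directions it explicitly constructs a vanishing linear combination (part 5), respectively computes Fourier coefficients of the $B^{-T}\Z^d$--periodization (part 6), and for the ``if'' direction of part 5 it simply cites the literature. You instead diagonalize the frame operator: under the packing hypothesis for $B^{-T}\Z^d$ it is multiplication by $|\det B|^{-1}G_0$, where $G_0$ is the $A\Z^d$--covering multiplicity of $\Omega$, and parts 2 and 3 are read off from $\essinf G_0$ and $\esssup G_0$; parts 4--6 follow by the same duality the paper uses, with the orthogonal/orthonormal normalization handled correctly. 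Your identity is correct (a Walnut-type representation; the packing hypothesis is exactly what kills the cross terms in the periodization), and this is arguably cleaner and more quantitative than the paper's argument. One step you do gloss over: the equivalence ``$G_0\equiv k$ a.e.\ iff $\Omega$ is a disjoint union of $k$ fundamental domains of $A\Z^d$'' is not purely formal in the direction you need for part 3; it requires extracting a fundamental domain $\Omega_1\subseteq\Omega$ and inducting (this is precisely the removal argument in the last paragraph of the paper's proof), so it should be spelled out rather than introduced by ``i.e.''.

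Your closing concern about the Bessel bound is not a weakness of your approach relative to the paper's: it exposes a genuine gap in the statement and in the paper's own proof, where the ``if'' direction of part 5 is dismissed as ``clearly \dots\ a Riesz basis sequence''. Indeed, your identity shows that the backward implication in part 2 needs $G_0\in L^\infty$, and the hypotheses do not provide this when the lattices are incommensurable. Concretely, take $d=1$, $A\Z=\Z$, $B^{-T}\Z=\pi\Z$: since $\Z\cap\pi\Z=\{0\}$, packing with respect to $\pi\Z$ puts no constraint whatsoever on how often $\Omega$ meets a coset of $\Z$. One can therefore build $\Omega\supseteq[0,1)$ of total measure less than $\pi$, packing with respect to $\pi\Z$, which for every $M$ contains $M$ integer translates of a common small interval; then $G_0$ is essentially unbounded, and by your identity $(\chi_\Omega,\Z\times\tfrac1\pi\Z)$ is not even a Bessel sequence, although $\Omega$ packs with respect to $B^{-T}\Z$ and contains a fundamental domain of $A\Z$. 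So the literal backward implications in parts 2 and 5 are false for general measurable $\Omega$; your fix (boundedness of $\Omega$, which indeed forces $G_0\in L^\infty$) proves a corrected statement, not the one asserted. You should present this as an erratum with the extra hypothesis (bounded $\Omega$, or more generally $G_0\in L^\infty$, resp.\ bounded $B^{-T}\Z^d$--multiplicity in part 5) stated explicitly, rather than silently restricting to bounded sets.
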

\begin{proof}
Theorem~\ref{th:ronshen} implies that it suffices to show parts 1, 5, and 6.
The first statement is trivial, as else $(\chi_\Omega, A\Z^d\times B\Z^d)$
would not span $L^2(\R^d)$.

In the remaining parts, we assume that $\Omega\subseteq \R^d$ is a
packing set  for $A \Z^d$, hence, it suffices to show that $\{M_\omega
\chi_{\Omega}\}_{\omega\in B\Z^d}$ is a Riesz basis sequence if and only if
$\Omega$ contains a fundamental domain of $B^{-T}\Z^d$, respectively an
orthogonal sequence if and only if $\Omega$ is the union of a finite number
of fundamental domains of $B^{-T}\Z^d$.

If $\Omega$ contains a fundamental domain of $B^{-T}\Z^d$, then clearly
$\{M_\omega \chi_{\Omega}\}_{\omega\in B\Z^d}$ is a Riesz basis
sequence \cite{C03}. If $\Omega$ does not contain a fundamental domain
of $B^{-T}\Z^d$, then $\sum_{\omega \in B^{-T}\Z^d} \chi_{ \Omega +
\omega}=0$ on a set $W\subseteq  B^{-T}[0,1)^d$ with $m(W)>0$. Note that
we have also $\sum_{\omega \in B^{-T}\Z^d} \chi_{ \Omega + \omega}=0$
on a set $W+x$ for any $x\in B^{-T}\Z^d$.

As $B^{-T}[0,1)^d$ is a fundamental domain for $B^{-T}\Z^d$, there exist
$\{c_\gamma\}_{\gamma \in B\Z^d}\neq 0$ such that $\chi_W=\sum
c_\gamma M_\gamma \chi_{B^{-T}[0,1)^d}$. Then $\sum c_\gamma
M_\gamma \chi_{\R^d}= \sum_{\omega\in B^{-T}\Z^d } \chi_{W+\omega}$
and
$$0=\chi_\Omega\big( \sum_{\omega\in B^{-T}\Z^d } \chi_{W+\omega}\big)=
\sum_{\gamma\in B\Z^d} c_\gamma M_\gamma \chi_{\Omega}. $$
We conclude that
$\{M_\omega \chi_{\Omega}\}_{\omega\in B\Z^d}$ is not a Riesz basis.

Now, if $\Omega$ is the finite union of fundamental domains of
$B^{-T}\Z^d$, then clearly $\{M_\omega \chi_{\Omega}\}_{\omega\in B\Z^d}$
is an orthogonal sequence. On the other side, if  $\{M_\omega
\chi_{\Omega}\}_{\omega\in B\Z^d}$ is an orthogonal sequence, then for
$\omega\neq 0$,
\begin{eqnarray}
  \notag
 0&=&    \int  M_\omega \chi_\Omega(t) \,dt= \int e^{2\pi i \omega t} \chi_\Omega(t)\,dt \\
  &=&\sum_{p\in B^{-T}\Z^d}\int_{B^{-T}[0,1)^d +p}e^{2\pi i \omega t} \chi_\Omega(t)\,dt \notag \\
    &=&\int_{B^{-T}[0,1)^d}\sum_{p\in B^{-T}\Z^d}e^{2\pi i \omega (t-p)} \chi_\Omega(t-p)\,dt \notag \\
       &=&\int_{B^{-T}[0,1)^d} e^{2\pi i \omega t} \sum_{p\in B^{-T}\Z^d}\chi_\Omega(t-p)\,dt \,.
\end{eqnarray}
Hence, $ \sum_{p\in B^{-T}\Z^d}\chi_\Omega(t-p)$ is an integer-valued
constant function, that is, \\  $\sum_{p\in
B^{-T}\Z^d}\chi_\Omega(t-p)=k\in\N$. This implies that almost every point in
$\R^d$ is covered by $k$ translates of $\Omega$. Hence, $\Omega$ is the
union of $k$ fundamental domains.  In fact, we can find a  fundamental
domain $\Omega_1 \subseteq \Omega$, remove it as shown below, and
then continue inductively.

As long as $\sum_{p\in B^{-T}\Z^d}\chi_\Omega(t-p)>1$ on a set of nonzero
measure, there exists $q\in B^{-T}\Z^d$ such that $\Omega+q \cap \Omega
$ has nonzero measure.  Set $\Omega'=\Omega\setminus ( \Omega + q)$.
Now,
$$\bigcup_{p\in B^{-T}\Z^d} \big( \Omega +  p \big)= \bigcup_{p\in B^{-T}\Z^d} \big(  \Omega' +  p \big)$$ follows from
$\bigcup_{\ell\in\Z}  \big( \Omega + \ell q \big)  = \bigcup_{\ell\in\Z} \big(
\Omega' + \ell q \big)$, which in turn follows from $ \Omega \subseteq
\bigcup_{\ell\in\Z} \big(  \Omega' + \ell q \big)$. But if $x\in \Omega\setminus
\Big( \bigcup_{\ell\in\Z} \big(  \Omega' + \ell q \big)\Big)$, then $x\in  \Omega
+ \ell q$ for all $\ell\in \N$, a contradiction.
\end{proof}

\begin{remark}
\rm  For $(\chi_\Omega, A\Z^d\times B\Z^d)$ to be a  frame for $L^2(\R^d)$
it is not necessary that $\Omega\subseteq \R^d$ is a packing set  for
$B^{-T} \Z^d$. In fact, $(\chi_{[0,3/2)}, \ 1/2 \Z\times \Z)$ is a frame
\cite{Jans}.

\end{remark}

%This implies that results of \cite{HW04} can not be directly applied
%when searching for windows with good time-frequency localization.
%However, we have the following weaker result
%\begin{proposition}\label{prop:complete-ronshen} Let $\Lambda=A\Z^d\times B\Z^d$ with $d(\Lambda)>1$.
%Let $\Omega$ be a fundamental domain for $A\Z^d$ and packing set  for%
%$B^{-T}\Z^d$. Let $g\in C(\R^d),\supp g=\Omega$, that is, $g\neq0$
%almost everywhere on $\Omega$, then $(g,\Lambda)$ is complete in
%$\LtR$.
%\end{proposition}

%We note also that whenever the window function is a characteristic
%function supported on a (union of) fundamental domains $\Omega$, the
%matrix $\mathbf{G}(x)$ given by~\eqref{eq:cross-ambig} is
%independent of the shape of $\Omega$.

We close with a simple, but interesting example.
\begin{example}\label{example1} \rm
Let $W$ be a fundamental domain for $  1/2 \Z\times \Z$, $(x,y)\in \R^2$,
and $\Omega=W+(x,y)\cup W$. Then $(\chi_\Omega, \   1/2 \Z\times \Z
\times \Z \times \Z)$ is a frame for $L^2(\R^2)$ if $\Omega$ is a packing set
for $(\Z\times\Z)^{-T}=\Z\times\Z$. A simple computation shows that this is
the case if and only if $W\cap W \pm (k+x,\ell+y)=\varnothing$ for all
$(k,\ell)\in \Z^2$. If $\Omega$ is a packing set  for $\Z\times\Z$, then
$(\chi_\Omega, \ \tfrac 1 2 \Z\times \Z \times \Z \times \Z)$ is a tight frame
for $L^2(\R^2)$ if and only if $x\in(2\Z+1)$ and $y\in \Z $.  For a detailed
discussion see \cite{PRtr}
\end{example}

\section{Existence of smooth and compactly supported Gabor frame windows}\label{section:smoothwind}
%In this section we construct examples of smooth and compactly
%supported window functions.

Han and Wang's construction of Gabor orthonormal bases for separable
lattices is a landmark result in Gabor analysis.  The drawback of their
approach  lies in the fact that the constructed functions are discontinuous
and may not even decay at infinity. In this section, we will construct
nonnegative, compactly supported, and smooth window functions for a class
of separable lattices.

We begin this section with a simple result to indicates a limitation of
Theorem~\ref{th:ONBexist} as well as the direction which we will take. Its
derivation is included in the appendix.

\begin{proposition}\label{prop:noexistance-ronshen} Let $\Lambda=A\Z^d\times B\Z^d$ with $d(\Lambda)>1$.
Let $\Omega$ be a fundamental domain for $A\Z^d$ and a packing set  for
$B^{-T}\Z^d$. If $g\in C(\R^d)$ and $\supp g=\Omega$, then the Gabor
system $(g,\Lambda)$ is complete, but not a frame for $L^2(\R^d)$.
\end{proposition}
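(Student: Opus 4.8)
The plan is to treat the two conclusions separately. That $(g,\Lambda)$ is \emph{not} a frame is, by the duality of Theorem~\ref{th:ronshen}, equivalent to $(g,\Lambda^\circ)$ failing to be a Riesz sequence, where $\Lambda^\circ=B^{-T}\Z^d\times A^{-T}\Z^d$. So I would analyze the system $\{M_{\omega'}T_{x'}g:\ x'\in B^{-T}\Z^d,\ \omega'\in A^{-T}\Z^d\}$ and show its lower bound is $0$. Since $\Omega$ is a packing set for $B^{-T}\Z^d$ and $\supp g=\Omega$, the translates $T_{x'}g$ have essentially disjoint supports $\Omega+x'$; hence the system splits into mutually orthogonal blocks indexed by $x'$, and it is a Riesz sequence if and only if a single block $\{M_{\omega'}g\}_{\omega'\in A^{-T}\Z^d}$ is, with the same bounds.

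For one block I would use that $\Omega$ is a fundamental domain for $A\Z^d$: by Theorem~\ref{th:Fug} the exponentials $\{e^{2\pi i\omega' t}\}_{\omega'\in A^{-T}\Z^d}$ (normalized by $m(\Omega)^{-1/2}$) are an orthonormal basis of $L^2(\Omega)$, so under this identification the synthesis operator $c\mapsto\sum_{\omega'}c_{\omega'}M_{\omega'}g$ is, up to the scalar $m(\Omega)^{1/2}$, the multiplication operator $P\mapsto gP$ on $L^2(\Omega)$. Thus the block is a Riesz sequence precisely when $0<\essinf_\Omega|g|\le\esssup_\Omega|g|<\infty$. Now $g$ is continuous and vanishes off $\Omega$, hence vanishes on $\partial\Omega$; taking a boundary point of the nonempty proper open set $\{g\neq0\}\subseteq\Omega$ (proper since $m(\Omega)=|\det A|<\infty$), continuity makes $|g|$ arbitrarily small on positive-measure subsets of $\Omega$, so $\essinf_\Omega|g|=0$. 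The multiplication operator is then not bounded below, the block is not a Riesz sequence, and $(g,\Lambda)$ is not a frame. Note that the density hypothesis $d(\Lambda)>1$ is used only to place us in the genuinely packing (non-tiling) regime where such $\Omega$ exists and one would hope for a frame; the mechanism above does not otherwise need it.

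For completeness, suppose $f\in L^2(\R^d)$ with $\langle f,M_\omega T_x g\rangle=0$ for all $(x,\omega)\in A\Z^d\times B\Z^d$. For fixed $x$, the product $H(t)=f(t)\overline{g(t-x)}$ lies in $L^1(\R^d)$ and has $\widehat H(\omega)=0$ for every $\omega\in B\Z^d$; by the periodization principle behind Lemma~\ref{lemma:vanishing} this means $\sum_{\lambda\in B^{-T}\Z^d}H(t+\lambda)=0$ a.e. Because $\Omega$ packs $B^{-T}\Z^d$, for a.e.\ $t$ the sets $\Omega-\lambda$ are disjoint, so at most one summand is nonzero; its vanishing forces $H\equiv0$, i.e.\ $f(t)\overline{g(t-x)}=0$ a.e. Hence $f$ vanishes a.e.\ on $\{g\neq0\}+x$ for every $x\in A\Z^d$. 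Since the $A\Z^d$-translates of $\Omega$ tile $\R^d$ and $\{g\neq0\}$ has full Lebesgue measure in $\Omega$, we conclude $f=0$, so $(g,\Lambda)$ is complete.

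The step I expect to be the main obstacle is the passage from the topological hypothesis $\supp g=\Omega$ to the measure-theoretic statements that actually drive both halves: namely $\essinf_\Omega|g|=0$ for the non-frame part, and $m(\{g\neq0\})=m(\Omega)$ for completeness. The first is handled cleanly by working with a boundary point of the open set $\{g\neq0\}$. The second is genuinely delicate, because a continuous window with $\supp g=\Omega$ could in principle vanish on a positive-measure (nowhere dense) subset of $\Omega$, and then any $f$ supported there would be orthogonal to the entire system, breaking completeness. I would therefore record explicitly that $g\neq0$ almost everywhere on $\Omega$ — which holds for the positive, smooth windows positive on the interior of a star-shaped domain with null boundary that are constructed later — and flag it as the hypothesis that makes the completeness conclusion correct.
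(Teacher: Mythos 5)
Your proposal is correct, and it relates to the paper's proof in two different ways. The completeness half is essentially the paper's own argument: the paper fixes $k$, reads $\langle f,M_{Bl}T_{Ak}g\rangle$ as the $Bl$-th Fourier coefficient of $f_k\cdot\overline{T_{Ak}g}$ with $f_k=f\chi_{\Omega+Ak}$, and concludes that this product vanishes a.e.; your periodization of $H=f\,\overline{T_xg}$ over $B^{-T}\Z^d$ is the same mechanism in a different formulation. The non-frame half is where you genuinely diverge. The paper stays on the frame side and uses the Walnut/Gramian representation of the frame operator (Proposition~\ref{prop:ron-shen-matrix}): the packing property of $\Omega$ for $B^{-T}\Z^d$ makes $\mathbf{G}(x)$ diagonal, the diagonal entry is (the $A\Z^d$-periodization of) $|g|^2$, and the frame inequalities force $a\le|g(x)|^2\le b$ a.e.\ on $\Omega$, contradicting continuity. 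You instead pass to the adjoint lattice via Theorem~\ref{th:ronshen}, use the same packing property to split $(g,\Lambda^\circ)$ into mutually orthogonal blocks, and identify a single block with multiplication by $g$ on $L^2(\Omega)$ via Theorem~\ref{th:Fug}, so that failure of the lower Riesz bound becomes $\essinf_\Omega|g|=0$, which you prove from a boundary point of $\{g\neq0\}$. Both arguments exploit the same phenomenon (disjointness of $B^{-T}\Z^d$-translates turns the relevant operator into a multiplication operator), but your route has two advantages: it avoids the appendix machinery entirely, and in particular it does not require $g\in W(\R^d)$ --- a hypothesis of Proposition~\ref{prop:ron-shen-matrix} that the paper never verifies and that can genuinely fail here, since $\Omega$, and hence $\supp g$, may be unbounded. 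Your boundary-point argument is also a careful version of the paper's terse closing phrase ``contradicting the continuity of $g$.''

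Your final flag is also on target, and it identifies a gap in the statement itself, not in your proof. The paper's completeness argument rests on the unjustified assertion that ``$g$ does not vanish on a subset of $\Omega$ of positive measure.'' Under the standard reading $\supp g=\overline{\{g\neq0\}}$, this does not follow from the hypotheses: take $d=1$, $A=1$, $B=1/2$, $\Omega=[0,1]$ (so $d(\Lambda)=2>1$ and $\Omega$ packs $B^{-T}\Z=2\Z$), let $K\subset[0,1]$ be a fat Cantor set, and set $g(t)=\mathrm{dist}\bigl(t,\,K\cup(\R\setminus[0,1])\bigr)$. Then $g$ is continuous with $\supp g=[0,1]=\Omega$, yet every nonzero $f\in L^2(\R)$ supported on $K$ is orthogonal to each $M_\omega T_x g$, $(x,\omega)\in\Lambda$, so $(g,\Lambda)$ is not complete. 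Hence the proposition needs either your added hypothesis ($g\neq0$ a.e.\ on $\Omega$) or the nonstandard reading $\{g\neq0\}=\Omega$ of the support condition; with that fix, both your proof and the paper's go through.
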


In view of Proposition~\ref{prop:noexistance-ronshen} we have to consider
window functions in $C_c^\infty(\R^d)$ whose support extends beyond the
fundamental domain $\Omega$ of $A\Z^d$. Recall that
$\Omega_\epsilon=\{x+y,x\in \Omega,\|y\|_2\le \epsilon\}$. Further, $\phi\in
C_c^\infty(\R^d)$ is nonnegative and  satisfies $\supp \phi \subseteq
B_1(0)=\{x\in \R^d: \|x\|_2\leq 1\}$ and $\int \phi =1$. Further $\phi_\epsilon
(x) = 1/\epsilon \, \phi(x/\epsilon)$.

\begin{theorem}\label{th:main} \

  \begin{enumerate}

         \item If there exists $\Omega\subseteq \R^d$, $\epsilon>0$, such
             that $\Omega$ is a fundamental domain for $A\Z^d$ and
             $\Omega_\epsilon$ is a packing set  for $B^{-T}\Z^d$, then
             $\displaystyle (\sqrt{\chi_\Omega \ast \phi_\epsilon},A\Z^d\times
             B\Z^d)$ is a tight frame for $L^2(\R^d)$ with frame bound $1/|\det
             B|$.

              \item  Suppose there exists $\Omega\subseteq \R^d$,
                  $\epsilon>0$, such that $\Omega$ is a fundamental domain for
        $B^{-T}\Z^d$ and $\Omega_\epsilon$ is a packing set  for $A\Z^d$,
        then $\displaystyle (\sqrt{\chi_\Omega \ast \phi_\epsilon},A\Z^d\times
        B\Z^d)$ is an orthonormal system.
  \end{enumerate}

\end{theorem}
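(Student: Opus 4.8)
The plan is to isolate a single orthogonality statement and to read off both parts from it, using the adjoint-lattice duality of Theorem~\ref{th:ronshen}. Concretely, I would prove the auxiliary claim $(\star)$: for full-rank lattices $C\Z^d,D\Z^d$ in $\R^d$, if $\Omega$ is a fundamental domain for $D^{-T}\Z^d$ and $\Omega_\epsilon$ is a packing set for $C\Z^d$, then $(g,C\Z^d\times D\Z^d)$ is an orthogonal Gabor system whose elements all have norm $\sqrt{m(\Omega)}$. Part~2 is exactly $(\star)$ with $C=A$, $D=B$. For part~1, Theorem~\ref{th:ronshen} says that $(g,A\Z^d\times B\Z^d)$ is a tight frame iff $(g,(A\Z^d\times B\Z^d)^\circ)=(g,B^{-T}\Z^d\times A^{-T}\Z^d)$ is orthonormal; applying $(\star)$ with $C=B^{-T}$, $D=A^{-T}$ and using the involution $X\mapsto X^{-T}$ (so that $D^{-T}\Z^d=A\Z^d$), its hypotheses are exactly those of part~1. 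Thus everything reduces to $(\star)$.

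For $(\star)$ the whole point is that $g=\sqrt{\chi_\Omega\ast\phi_\epsilon}\ge 0$, so $|g|^2=\chi_\Omega\ast\phi_\epsilon$ is a nonnegative function supported in $\Omega_\epsilon$ with Fourier transform $\fhat{|g|^2}=\fhat{\chi_\Omega}\cdot\fhat{\phi_\epsilon}$. I would then evaluate $\langle\pi(x,\omega)g,\pi(x',\omega')g\rangle$ over $(x,\omega),(x',\omega')\in C\Z^d\times D\Z^d$ in two cases. If $x\neq x'$, the supports $\Omega_\epsilon+x$ and $\Omega_\epsilon+x'$ meet in a null set by the packing hypothesis for $C\Z^d$, so the inner product is $0$ for all $\omega,\omega'$. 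If $x=x'$, a translation in the integral gives $\langle\pi(x,\omega)g,\pi(x,\omega')g\rangle=e^{2\pi i\langle\omega-\omega',x\rangle}\,\fhat{|g|^2}(\omega'-\omega)$, and $\omega-\omega'$ ranges over $D\Z^d$. Since $\Omega$ is a fundamental domain for $D^{-T}\Z^d$, Lemma~\ref{lemma:vanishing} gives that $\fhat{\chi_\Omega}$, and hence $\fhat{\chi_\Omega}\cdot\fhat{\phi_\epsilon}$, vanishes on $(D^{-T}\Z^d)^\bot\setminus\{0\}=D\Z^d\setminus\{0\}$; this yields modulation orthogonality, while the diagonal value $\fhat{|g|^2}(0)=m(\Omega)\int\phi_\epsilon=m(\Omega)$ is the common squared norm.

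The one step requiring care, and the crux of the argument, is the modulation orthogonality; the translation orthogonality and the norm computation are routine. It hinges on the square root, which is exactly what forces $|g|^2=\chi_\Omega\ast\phi_\epsilon$, so that $\fhat{|g|^2}$ factors through $\fhat{\chi_\Omega}$ and inherits its vanishing on $D\Z^d\setminus\{0\}$. Had one used the smooth window $\chi_\Omega\ast\phi_\epsilon$ itself, $\fhat{|g|^2}$ would be a self-convolution of $\fhat{\chi_\Omega\ast\phi_\epsilon}$, with no reason to vanish on the lattice, and modulation orthogonality would break down; the mollification only enlarges the support from $\Omega$ to $\Omega_\epsilon$, which is precisely why the packing hypothesis is placed on $\Omega_\epsilon$. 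Equivalently, one may verify modulation orthogonality through the periodization identity $\sum_{p\in D^{-T}\Z^d}|g|^2(\cdot-p)=\big(\sum_{p}\chi_\Omega(\cdot-p)\big)\ast\phi_\epsilon\equiv 1$, which is the Fourier-series reformulation of the vanishing of $\fhat{|g|^2}$ on $D\Z^d\setminus\{0\}$. The only remaining care is normalization: with $\int\phi_\epsilon=1$ one has $\|g\|_{L^2}^2=m(\Omega)$, so in part~2 the family is orthonormal after the scaling $m(\Omega)^{-1/2}$ in the spirit of Proposition~\ref{lem:necessity}(5), and in part~1 the relation $a=d(\Lambda)\|g\|^2$ with $\|g\|^2=m(\Omega)=|\det A|$ produces the stated tight-frame bound $1/|\det B|$.
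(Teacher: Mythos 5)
Your proposal is correct and takes essentially the same route as the paper: you prove the orthogonality statement (part~2) directly --- translation orthogonality from the packing property of $\Omega_\epsilon$, modulation orthogonality from $\widehat{|g|^2}=\widehat{\chi_\Omega}\cdot\widehat{\phi_\epsilon}$ vanishing on the dual lattice via Lemma~\ref{lemma:vanishing} --- and then deduce part~1 from Theorem~\ref{th:ronshen} together with $\|g\|_{L^2}^2=m(\Omega)=|\det A|$, exactly as the paper does. Your auxiliary claim $(\star)$ simply makes explicit the lattice substitution ($C=B^{-T}$, $D=A^{-T}$) that the paper leaves implicit, and your normalization remarks tighten a point the paper glosses over.
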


Note that the conditions on $\Omega$ imply $d(\Lambda)<1$ in the first
statement and $d(\Lambda)>1$ in the second statement of
Theorem~\ref{th:main}.

\begin{proof}
We shall prove the second statement, the first statement follows then from
Theorem~\ref{th:ronshen} and the observation that
$$\|\sqrt{\chi_\Omega \ast
\phi_\epsilon}\|^2_{L^2}=\int |\chi_\Omega\ast \phi_\epsilon|=\int \chi_\Omega=m(\Omega)=|\det A|.$$

Following the proof of Theorem~\ref{th:ONBexist}, we obtain that
$(\chi_\Omega , \Lambda)$ is an orthonormal system. Moreover, $\supp
\chi_\Omega\ast \phi_\epsilon \subseteq \Omega_\epsilon$, and as
$\Omega_\epsilon$ is a packing set  for $A\Z^d$, we maintain
$\pi(k,\ell)\sqrt{\chi_\Omega\ast \phi_\epsilon}$ is orthogonal to $
\pi(k',\ell')\chi_\Omega\ast \phi_\epsilon$ if $k\neq k'$. It remains to show
that  $\{\pi(0,\ell')\chi_\Omega \ast \phi_\epsilon\}$ is orthogonal.  But this
follows as for $\ell\neq \ell'$
\begin{eqnarray}
 \langle \pi(0,\ell)\sqrt{\chi_\Omega \ast \phi_\epsilon} &\!\!\!\!\!\!\!\!\!, & \!\!\!\!\!\!\!\!\!  \pi(0,\ell')\sqrt{\chi_\Omega \ast \phi_\epsilon}\rangle
    =  \int e^{2\pi i (\ell-\ell') x }\chi_\Omega \ast \phi_\epsilon(x) \, dx \notag \\
     &=&  (\chi_\Omega \ast \phi_\epsilon)\widehat{\ \ } (\ell'-\ell)=  \widehat{\chi_\Omega}(\ell'-\ell)
     \widehat{ \phi_\epsilon} (\ell'-\ell)=0
     \,.\notag
\end{eqnarray}

\end{proof}

To construct sets that allow for the application of Theorem~\ref{th:main}, we
turn to lattices which have  starshaped common fundamental domains.%
%$\Omega'$ for $B^{-T}\Z^d$ such that there exists a fundamental domain
%$\Omega$ for $A\Z^d$ with $\Omega_\epsilon\subset\Omega'$ for some
%$\epsilon>0$. Due to the fact that $m(\Omega')=|\det B^{-T}|
%> |\det A|=m(\Omega)$,
%such $\Omega$ could exist. %In addition as
%long as the set $\Omega'$ is bounded, we can apply the result on
%perturbed frames \cite{CC97} to a mollified characteristic function.
%This leads to the question: \textit{Which lattices allow fundamental
%domains $\Omega$ and $\Omega'$ with such properties?} A sufficient
%condition for our purposes is \textit{star-shapedness} of the tiling set.

\begin{proposition}\label{prop:starshaped}
If $0< |\det A|<|\det B|$ and $|\det B / \det A|^{1/d} A\Z^d$ and $B\Z^d$
have a bounded  and star-shaped common fundamental domain, then exists
$\Omega\subseteq \R^d$ such that $\Omega_\epsilon$ is a packing set  for
$B\Z^d$ and $\Omega$ is a tiling set for $A\Z^d$.
\end{proposition}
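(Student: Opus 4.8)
The plan is to obtain $\Omega$ by dilating the given common fundamental domain toward its star center. Write $\lambda=|\det B/\det A|^{1/d}$, so $\lambda>1$ and the dilated lattice $\lambda A\Z^d$ satisfies $\vol(\lambda A\Z^d)=\lambda^d|\det A|=|\det B|=\vol(B\Z^d)$; thus $\lambda A\Z^d$ and $B\Z^d$ have equal volume, which is what makes a common fundamental domain possible. Let $D$ be the hypothesized bounded, star-shaped common fundamental domain, with star center $N$. Since any translate of a fundamental domain is again a fundamental domain for the same lattice, I may replace $D$ by $D-N$ and assume from the outset that the star center is the origin; then $D$ is star-shaped about $0$, and as $D$ is a (regular) fundamental domain of a full-rank lattice I take $0$ to lie in its interior, so $B_\rho(0)\subseteq D\subseteq B_R(0)$ for some $0<\rho\le R$. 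My candidate is $\Omega:=\tfrac1\lambda D$.

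First I would verify the tiling assertion. The dilation $x\mapsto\tfrac1\lambda x$ is a linear bijection of $\R^d$ carrying the lattice $\lambda A\Z^d$ onto $A\Z^d$ and the set $D$ onto $\Omega$; since invertible linear maps send tilings to tilings, $D$ tiling $\lambda A\Z^d$ forces $\Omega$ to tile $A\Z^d$. As a consistency check, $m(\Omega)=\lambda^{-d}m(D)=\lambda^{-d}|\det B|=|\det A|=\vol(A\Z^d)$.

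Next comes the packing assertion for $B\Z^d$, including the room to fatten. Because $0<\tfrac1\lambda\le1$ and $D$ is star-shaped about $0$, the dilation property recorded before Lemma~\ref{lemma:vanishing} gives that $\Omega=\tfrac1\lambda D\subseteq D$ is a packing set for $B\Z^d$. The substance is to produce $\epsilon>0$ with $\Omega_\epsilon$ still packing. I would phrase this through difference sets: $\Omega_\epsilon$ fails to pack $B\Z^d$ exactly when some nonzero $\beta\in B\Z^d$ lies in $\Omega_\epsilon-\Omega_\epsilon=\tfrac1\lambda(D-D)+B_{2\epsilon}(0)$, i.e.\ when $\lambda\beta$ lies within distance $2\lambda\epsilon$ of the compact set $D-D$. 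Hence it suffices to show that $\delta:=\inf\{\,\operatorname{dist}(\lambda\beta,\,D-D):\beta\in B\Z^d\setminus\{0\}\,\}$ is strictly positive, for then any $\epsilon<\delta/(2\lambda)$ works. Since $D-D$ is bounded, $\operatorname{dist}(\lambda\beta,D-D)\to\infty$ as $\|\beta\|_2\to\infty$, so the infimum is attained over the finitely many $\beta$ with $\lambda\|\beta\|_2$ at most the diameter of $D-D$; thus $\delta>0$ is equivalent to $\lambda\beta\notin D-D$ for each such $\beta\neq0$.

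The crux, and the step I expect to be the main obstacle, is precisely this separation $\lambda\beta\notin D-D$. The favorable structural fact is that, since $D$ is star-shaped about $0$, its difference set $D-D$ is again star-shaped about $0$ (for $u,v\in D$ and $t\in[0,1]$ one has $tu,tv\in D$, hence $t(u-v)\in D-D$), and it is compact, symmetric, and contains $B_{2\rho}(0)$. Packing of $D$ with respect to $B\Z^d$ means each nonzero $\beta$ has interior-disjoint translates, so $\beta$ sits on or outside the boundary of $D-D$ along its own radial direction; dilating by $\lambda>1$ then pushes $\lambda\beta$ strictly beyond $D-D$ along that ray. When $D$ is convex this is immediate, as the radial (Minkowski) function of the convex symmetric body $D-D$ is continuous and scales linearly under dilation. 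The delicate point for a merely star-shaped $D$ is that its boundary may pinch toward $0$ in some directions, so a strict radial dilation need not create a Euclidean margin; here one must use that a tiling fundamental domain is regular (the closure of its interior, with null boundary) to rule out such pinching and to identify $\{\beta:m(D\cap(D+\beta))>0\}$ with the interior of $D-D$. Granting this regularity, $\delta>0$ follows and the construction is complete.
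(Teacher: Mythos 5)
Your construction coincides with the paper's: set $\lambda=|\det B/\det A|^{1/d}$, translate the star center to the origin, and take $\Omega=\lambda^{-1}D$; the tiling claim for $A\Z^d$ is handled identically (invertible linear maps carry tilings to tilings). Where you diverge is in verifying that $\Omega_\epsilon$ packs $B\Z^d$: the paper's proof claims that $\Omega_\epsilon\subseteq D$ for some $\epsilon>0$, so that $\Omega_\epsilon$ packs simply because a subset of a tiling set is a packing set, whereas you route the argument through the difference set $D-D$ and a uniform separation $\delta=\inf\{\operatorname{dist}(\lambda\beta,D-D):\beta\in B\Z^d\setminus\{0\}\}>0$. (A small slip along the way: ``$\Omega_\epsilon$ fails to pack exactly when some nonzero $\beta$ lies in $\Omega_\epsilon-\Omega_\epsilon$'' is not an equivalence, since membership in a difference set does not imply positive-measure overlap; but you only use the correct direction, so this is harmless.)

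The genuine problem is that your proof stops at the step you yourself call the crux. The separation $\lambda\beta\notin D-D$ is never established: you defer it to the assertion that ``a tiling fundamental domain is regular (the closure of its interior, with null boundary).'' That assertion is neither a hypothesis of Proposition~\ref{prop:starshaped} nor true in general: a bounded star-shaped fundamental domain need not be regular --- for instance, a set whose radial function equals $1$ on a dense measurable set of directions and $2$ on its complement is bounded and star-shaped, yet $D\setminus{\rm int}(D)$ has positive measure --- and nothing in the hypothesis or in Theorem~\ref{th:hanwang} excludes such sets. Moreover, even granting regularity, the identification $\{\beta: m(D\cap(D+\beta))>0\}={\rm int}(D-D)$ that you need is itself left unproven, and the pinching phenomenon you describe is a real obstruction: if $0\in D$ then $D-D\supseteq D$, so a thin spike of $D$ reaching close to a dilated lattice point $\lambda\beta$ forces $\delta=0$ even when all lattice translates of $D$ have null overlap. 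So the proof is incomplete at its central step. In fairness, the paper's own proof is defective at exactly the same point: it claims $\Omega_\epsilon\subseteq\Omega'$ but establishes only $\Omega\subseteq\Omega'$, after quietly strengthening ``star-shaped'' to ``all segments from $N$ lie in the \emph{interior} of $\Omega'$,'' and never produces the $\epsilon$-margin. You have accurately located the difficulty that the paper glosses over; but naming the obstruction and then ``granting'' it away does not close it, and your route additionally requires the difference-set identification that the paper's containment argument avoids.
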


\begin{proof}
%We revert to the case of a lattice with unit volume.
We let $\tilde A= |\det B / \det A|^{1/d} A\Z^d$, and obtain $|\det \tilde
A|=|\det B|$. By Theorem~\ref{th:hanwang} there exists a measurable set
$\Omega'$ which is a common fundamental domain for $\tilde A\Z^d$ and
$B\Z^d$, and by hypothesis, we can assume $\Omega'$ is star-shaped. We
claim that there exists a fundamental domain $\Omega$ for $A\Z^d$ such
that $\Omega_\epsilon\subset\Omega'$. By our hypothesis, we choose a
point $N\in\Omega'$ such that for all $P\in\Omega'$, the segment
$\overrightarrow{NP}$ is contained in the interior of $\Omega'$. Without loss
of generality we may take $N$ to be the origin
(Figure~\ref{figure:dilation-omega}). We apply a dilation with center
\emph{N} and coefficient $|\det B / \det A|^{-1/d}$ to $\Omega'$ and obtain
a set $\Omega$ with
$\Omega\cap\Omega'=\Omega$. %(Figure~\ref{figure:dilation-omega}).
%In addition, there is a $\epsilon$-neighborhood of $\Omega$ (for $\epsilon$ sufficiently small), contained inside $\Omega'$.
Clearly, $\Omega$ is a fundamental domain for the lattice $A\Z^d$.
% A
%change of variable $x=y\cdot d(\Lambda)^{-\frac1d}$ yields
%\begin{eqnarray*}
%% \nonumber to remove numbering (before each equation)
%  \fhat{\chi_\Omega}(\xi) &=& \int_\Omega e^{-2\pi i\langle\xi,x\rangle}dx=
%%    \int_{\Omega-N}e^{-2\pi i\langle\xi,N+y\rangle}dy\\&=&
%   \tfrac1{d(\Lambda)}\int_{\Omega'}e^{-2\pi i\langle \xi, yd(\Lambda)^{-\frac1d}\rangle}dy\\
%   &=&\tfrac1{d(\Lambda)}\int_{\Omega'}e^{-2\pi i\langle d(\Lambda)^{-\frac1d}\xi,y\rangle}dy=\tfrac1{d(\Lambda)} \fhat{\chi_{\Omega'}}\bigl(d(\Lambda)^{-\frac1d}\xi\bigr)
%\end{eqnarray*}
%Since $\fhat{\chi_{\Omega'}}$ vanishes on $\tilde
%A^{-T}\Z^d{\setminus}\{0\}$, $\fhat\chi_{\Omega}$ vanishes on
%$d(\Lambda)^{\frac1d}\tilde
%A^{-T}\Z^d{\setminus}\{0\}=A^{-T}\Z^d{\setminus}\{0\}$. By
%Lemma~\ref{lemma:vanishing}, $\Omega$ is a fundamental domain for
%$A\Z^d$.
\end{proof}

\begin{corollary}\label{cor:main}
For $\Lambda=A\Z^d\times B\Z^d$ with $d(\Lambda)>1$ suppose that the
lattices $d(\Lambda)^{1/d}A\Z^d$ and $B^{-T}\Z^d$ have a bounded and
star-shaped common fundamental domain $\Omega$, then exists a
nonnegative $g\in \C_c^\infty(\R^d)$ with $(g,\Lambda)$ being a tight frame
for $L^2(\R^d)$.
\end{corollary}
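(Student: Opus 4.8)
The plan is to reduce the corollary to Proposition~\ref{prop:starshaped} and Theorem~\ref{th:main} by matching the volume bookkeeping, and then to verify the three regularity properties of the resulting window, with smoothness being the only delicate point.

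First I would set up the reduction. Recall that $d(\Lambda)=1/|\det A\det B|$, so the hypothesis $d(\Lambda)>1$ means $|\det A\det B|<1$. I apply Proposition~\ref{prop:starshaped} with the pair $(A,B^{-T})$ in place of $(A,B)$. The requirement $0<|\det A|<|\det B^{-T}|$ holds because $|\det B^{-T}|=1/|\det B|>|\det A|$, again by $|\det A\det B|<1$. The dilation factor in the proposition is $|\det B^{-T}/\det A|^{1/d}=(1/|\det A\det B|)^{1/d}=d(\Lambda)^{1/d}$, so the scaled lattice there is exactly $d(\Lambda)^{1/d}A\Z^d$. Hence the proposition's hypothesis reads: $d(\Lambda)^{1/d}A\Z^d$ and $B^{-T}\Z^d$ admit a bounded star-shaped common fundamental domain, which is precisely what the corollary assumes. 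The proposition then produces a set $\Omega_A$ (a suitable dilate of the given common fundamental domain) together with some $\epsilon>0$ such that $\Omega_A$ is a fundamental domain for $A\Z^d$ and $(\Omega_A)_\epsilon$ is a packing set for $B^{-T}\Z^d$.

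These are exactly the hypotheses of Theorem~\ref{th:main}(1), which I invoke to conclude that $g:=\sqrt{\chi_{\Omega_A}\ast\phi_\epsilon}$ generates a tight frame $(g,\Lambda)$ for $L^2(\R^d)$ with bound $1/|\det B|$. It then remains only to check $g\in C_c^\infty(\R^d)$ and $g\ge 0$. Nonnegativity is built into the definition of $g$, and compact support is immediate: $\Omega_A$ is bounded (it lies inside the bounded common fundamental domain), hence $\supp g\subseteq (\Omega_A)_\epsilon$ is bounded.

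The main obstacle is smoothness. Write $h:=\chi_{\Omega_A}\ast\phi_\epsilon$. Convolution with $\phi_\epsilon\in C_c^\infty(\R^d)$ gives $h\in C^\infty(\R^d)$ with $0\le h\le 1$, and $\{h>0\}$ is the open $\epsilon$-neighborhood of $\Omega_A$; on this open set $\sqrt h$ is visibly $C^\infty$. The difficulty is that the square root of a smooth nonnegative function need not be smooth where the function vanishes, so I must control $\sqrt h$ along $\partial\{h>0\}$. Here I would exploit that the mollifier $\phi$ vanishes to infinite order at $\partial B_1(0)$, which forces $h$ to vanish to infinite order at every boundary point of its support. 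I would then argue, via a local analysis near a boundary point (straightening the boundary and isolating the transverse transition profile obtained by integrating $\phi_\epsilon$ over a half-space), that for a regularly flat choice of $\phi$, for instance the standard bump $\phi(x)\propto e^{-1/(1-\|x\|_2^2)}$, the transition profile and hence $h$ vanish like $e^{-c/\mathrm{dist}}$, a flatness strong enough that $\sqrt h$ is again $C^\infty$ with all derivatives vanishing at the boundary. I expect this flatness verification to be the only genuinely technical step; the determinant bookkeeping and the appeals to Proposition~\ref{prop:starshaped} and Theorem~\ref{th:main} are routine.
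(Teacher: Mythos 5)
Your reduction is exactly the paper's proof of Corollary~\ref{cor:main}: the paper likewise applies Proposition~\ref{prop:starshaped} with $B$ replaced by $B^{-T}$, records the same identity $d(\Lambda)=|\det B^{-T}/\det A|$ (so that the dilation factor there is $d(\Lambda)^{1/d}$), and then invokes Theorem~\ref{th:main}. (The paper cites part~2 of that theorem, i.e.\ it applies the orthonormality statement to the adjoint lattice $B^{-T}\Z^d\times A^{-T}\Z^d$ and then uses Theorem~\ref{th:ronshen}; since part~1 is itself deduced from part~2 in this way, your direct appeal to part~1 is the same argument.) So the bookkeeping and the reduction are correct and match the paper.

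The gap is in your final paragraph, and it is a genuine one. The principle you rely on --- that if $h\in C_c^\infty(\R^d)$, $h\ge 0$, vanishes to infinite order at $\partial\{h>0\}$, even with bounds of the form $e^{-c/\mathrm{dist}}$, then $\sqrt h$ is smooth --- is false. On $\R$ take $f(x)=e^{-2/x}\bigl(1+\sin(1/x)\bigr)+e^{-4/x}$ for $x>0$ and $f(x)=0$ for $x\le 0$: this $f$ is smooth, nonnegative, flat at $0$, and satisfies $e^{-4/x}\le f(x)\le 3e^{-2/x}$, yet at the points $x_n$ with $\sin(1/x_n)=-1$ one computes $(\sqrt f)''(x_n)=\tfrac12 x_n^{-4}+o(1)\to\infty$, so $\sqrt f$ is not even $C^2$ at the origin. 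Thus flatness estimates alone cannot close the argument; what must be excluded is \emph{oscillation} of $h$ transverse to the boundary of its support. Your plan to control this by straightening the boundary and computing a transition profile presupposes that $\partial\Omega_\epsilon$ is regular enough to straighten; but the set $\Omega$ furnished by Proposition~\ref{prop:starshaped} is only a dilate of a measurable star-shaped fundamental domain, whose boundary (a radial graph over directions) can be highly irregular --- even your identification of $\{h>0\}$ with the open $\epsilon$-neighborhood of $\Omega$ can fail if $\Omega$ has measure-zero tentacles. To be fair, the paper never addresses why $\sqrt{\chi_\Omega\ast\phi_\epsilon}$ should lie in $C_c^\infty(\R^d)$ either; you have correctly isolated the weak point of the construction, but your patch does not repair it.

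A repair that does work avoids square roots of convolutions altogether. Choose $u\in C_c^\infty(\R^d)$, $u\ge 0$, with $\supp u\subseteq\Omega_\epsilon$ and $u>0$ on $\overline{\Omega}$ (for instance $u=\chi_{\Omega_{\epsilon/2}}\ast\phi_{\epsilon/4}$), set $F=\sum_{\lambda\in A\Z^d}u^2(\cdot-\lambda)$, which is smooth, $A\Z^d$-periodic and bounded below by a positive constant (translates of the compact set $\overline{\Omega}$ cover $\R^d$), and put $g=u/\sqrt F$. Then $g\in C_c^\infty(\R^d)$ is nonnegative, $\supp g\subseteq\Omega_\epsilon$, and $\sum_{\lambda\in A\Z^d}g^2(\cdot-\lambda)\equiv 1$. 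These are the only two properties of the window that the proof of Theorem~\ref{th:main} actually uses: disjointness of the supports of $g(\cdot-\mu)$, $\mu\in B^{-T}\Z^d$, gives orthogonality in the time direction, and the vanishing of $\widehat{g^2}$ on $A^{-T}\Z^d\setminus\{0\}$ (equivalent, by periodization, to the tiling identity) gives orthogonality in the frequency direction, so $(g,B^{-T}\Z^d\times A^{-T}\Z^d)$ is orthogonal and Theorem~\ref{th:ronshen} yields that $(g,\Lambda)$ is a tight frame. With this substitution your argument becomes a complete and rigorous proof of the corollary, and in fact a more solid one than the paper's.
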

\begin{proof}
 The result follows from Theorem~\ref{th:main}, part 2, and Proposition~\ref{prop:starshaped} where we replace $B$
 by $B^{-T}$ and note that
 $$1<d(\Lambda)=\Big|\frac 1 {\det A\, \det B} \Big|=\Big|\frac {\det B^{-T}} {\det A}  \Big|$$
\end{proof}
%%%%%%%%%%%%%%%%%%%%%%%%%%%%%%%%%%%%%%%%%%%%%%%%%%%%%%%%%%%%%%%
\begin{figure}
\begin{center}
   \includegraphics[width=7cm]{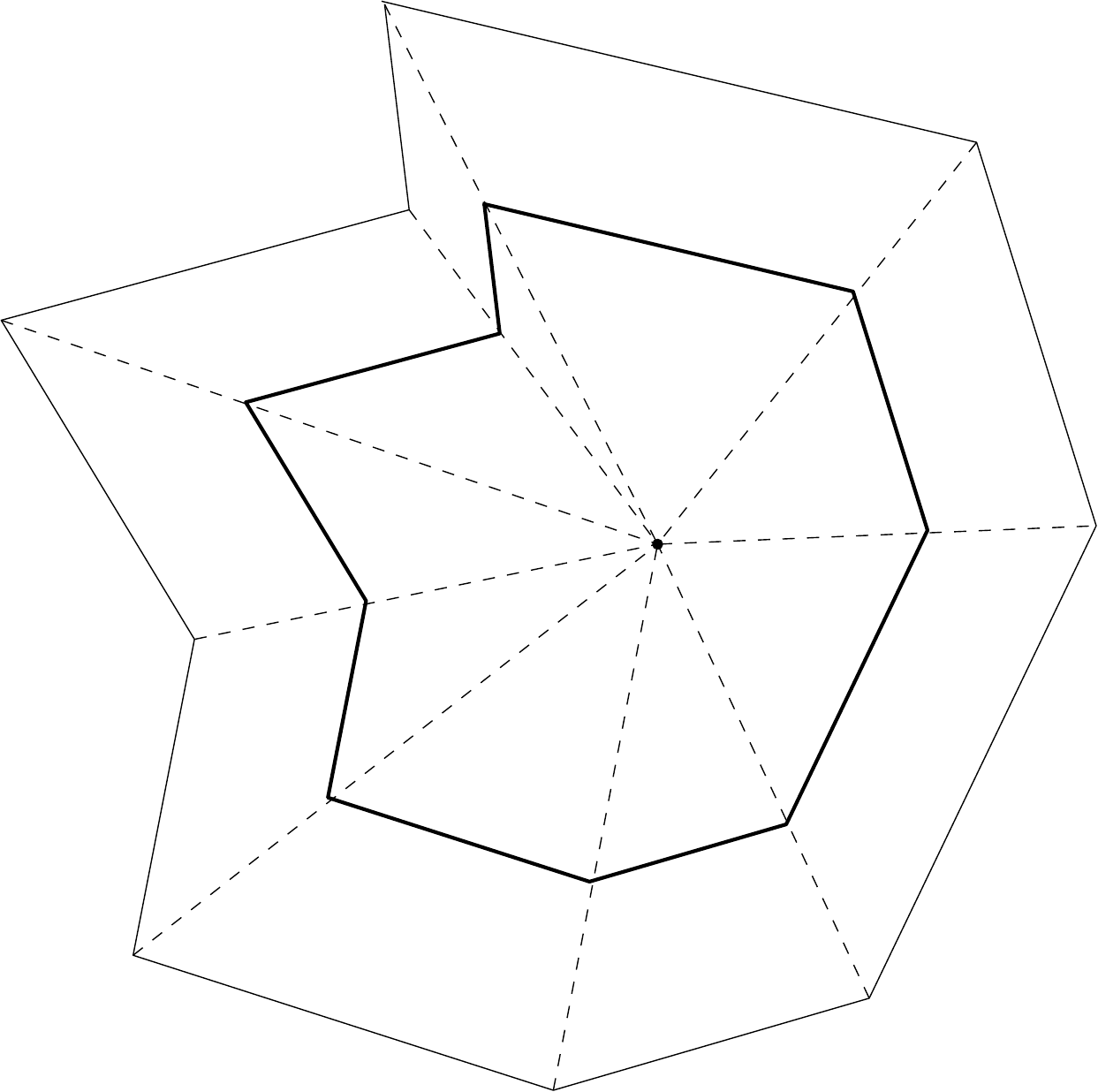}\\
   \end{center}
   \vspace{-4.8cm}\hspace{7.2cm}\emph{N}

   \vspace{1.6cm}\hspace{8.2cm}$\Omega$
   \hspace{1.5cm}$\Omega'$

\vspace{1.4cm}
   \begin{caption}{$\Omega$ is the scaled copy of $\Omega'$
   under dilation with center $N$.}\label{figure:dilation-omega}
\end{caption}
\end{figure}

Corollary~\ref{cor:main} can be extended to a class of upper and
lower-block triangular lattices.
\begin{corollary}\label{co:ltm}
If the lattices $ A\Z^d$ and $|\det A\, \det B|^{1/d}\,B^{-T}\Z^d$ have a
bounded and star-shaped common fundamental domain $\Omega'$ and if
$D$ is such that $DA^{-1}$  symmetric, then exists $g\in C_c^\infty(\R^d)$
such that $(g,\left(\begin{smallmatrix}
       A & 0 \\
       D & B \\
      \end{smallmatrix}\right)
    \Z^{2d})$ is a frame for $\LtR$.
\end{corollary}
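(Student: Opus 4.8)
The plan is to reduce the lower-block-triangular lattice to a separable one by peeling off a symplectic shear, and then to produce the separable window with Corollary~\ref{cor:main} and transport it with the metaplectic intertwining of Theorem~\ref{th:metaplectic}. The crucial observation is the matrix factorization
\[
\left(\begin{smallmatrix} A & 0 \\ D & B \end{smallmatrix}\right)
= \left(\begin{smallmatrix} I & 0 \\ DA^{-1} & I \end{smallmatrix}\right)
\left(\begin{smallmatrix} A & 0 \\ 0 & B \end{smallmatrix}\right),
\]
in which the right factor $M_0=\left(\begin{smallmatrix} A & 0 \\ 0 & B \end{smallmatrix}\right)$ generates the separable lattice $\Lambda_0=A\Z^d\times B\Z^d$, and the left factor $S=\left(\begin{smallmatrix} I & 0 \\ DA^{-1} & I \end{smallmatrix}\right)$ is a lower shear. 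The symmetry hypothesis on $DA^{-1}$ is exactly what makes $S$ symplectic: in the notation of the definition of ${\rm Sp}(d)$, $S$ has upper-right block $0$ and lower-left block $DA^{-1}$, so the three symplectic identities reduce to $(DA^{-1})^T=DA^{-1}$.

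First I would produce a smooth window for the separable piece. Rescaling both lattices by the common dilation factor $|\det A\,\det B|^{1/d}=d(\Lambda_0)^{-1/d}$ carries $A\Z^d$ to $d(\Lambda_0)^{1/d}A\Z^d$ and carries $|\det A\,\det B|^{1/d}B^{-T}\Z^d$ to $B^{-T}\Z^d$; since both rescaled lattices have covolume $|\det A|$ and since boundedness and star-shapedness are preserved under dilation, the hypothesis of Corollary~\ref{co:ltm} is \emph{verbatim} the hypothesis of Corollary~\ref{cor:main} for the same pair $A,B$. Working in the relevant regime $d(\Lambda_0)=1/|\det A\,\det B|>1$ (the only one compatible with a smooth window, by Balian--Low), Corollary~\ref{cor:main} then delivers a nonnegative $g_0\in C_c^\infty(\R^d)$ for which $(g_0,\Lambda_0)$ is a tight frame for $\LtR$.

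Next I would transport this frame across $S$. Because $S\in{\rm Sp}(d)$, Theorem~\ref{th:metaplectic} supplies a unitary $\mu(S)$ with $\pi(S\lambda)=\mu(S)^\ast\pi(\lambda)\mu(S)$, and Theorem~\ref{th:symplectic lattices} (with the choice $g:=\mu(S)g_0$) guarantees that $(g,S\Lambda_0)=(g,\left(\begin{smallmatrix} A & 0 \\ D & B \end{smallmatrix}\right)\Z^{2d})$ is again a frame for $\LtR$, with the same bounds by unitarity. The point that must be checked is that $g$ lands in $C_c^\infty(\R^d)$. Here $S$ is precisely the shear generator whose metaplectic operator is multiplication by the chirp $e^{\pi i\langle x,DA^{-1}x\rangle}$; multiplying $g_0$ by this smooth, unimodular phase neither enlarges its support nor disturbs its differentiability, so $g=e^{\pi i\langle x,DA^{-1}x\rangle}g_0(x)\in C_c^\infty(\R^d)$ (now complex-valued rather than nonnegative), which completes the argument.

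I expect the main obstacle to be exactly this preservation of $C_c^\infty(\R^d)$. A generic element of ${\rm Sp}(d)$ acts by a metaplectic operator that destroys compact support --- the Fourier transform attached to $\left(\begin{smallmatrix} 0 & I \\ -I & 0 \end{smallmatrix}\right)$ being the obvious culprit --- so the whole construction hinges on the hypothesis that $DA^{-1}$ be symmetric, which forces $S$ to be a \emph{pure} shear and hence $\mu(S)$ a chirp multiplication, one of the two generator types (alongside dilation) that do restrict to $C_c^\infty(\R^d)$, as recorded after Theorem~\ref{th:symplectic lattices}. Once this structural point is in place, verifying the symplectic identities for $S$ and reconciling the convention for which of $\mu(S),\mu(S)^\ast$ intertwines the two lattices are routine.
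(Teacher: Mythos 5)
Your proof is correct and takes essentially the same route as the paper: the paper applies the inverse shear $T=\left(\begin{smallmatrix} I & 0 \\ -DA^{-1} & I \end{smallmatrix}\right)$ (your $S^{-1}$) to map the block-triangular lattice onto the separable lattice $A\Z^d\times B\Z^d$, invokes Corollary~\ref{cor:main} there, and pulls the window back via the chirp multiplication attached to the shear, which is precisely your argument read in the opposite direction. The only blemish is a harmless sign slip: the common dilation matching your hypothesis to that of Corollary~\ref{cor:main} is by the factor $|\det A\,\det B|^{-1/d}$, not $|\det A\,\det B|^{1/d}$, as your own stated target lattices $d(\Lambda_0)^{1/d}A\Z^d$ and $B^{-T}\Z^d$ confirm.
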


\begin{proof}
Let
$T= \left(\begin{smallmatrix}
       I & 0 \\
       -DA^{-1} & I \\
\end{smallmatrix}\right)
$.
Then $T\left(\begin{smallmatrix}
       A & 0 \\
       D & B \\
      \end{smallmatrix}\right)\Z^d=A\Z^d\times B\Z^d$ is separable and fulfills the
conditions of Corollary~\ref{cor:main}, so there exists $\tilde g\in
C_c^\infty(\R^d)$ such that $(\tilde g,T\Lambda)$ is a frame for $\LtR$.
Since $T$ is symplectic, by Theorem~\ref{th:symplectic lattices} there
exists $g\in \calS(\R^d)$ such that $(g,\Lambda)$ is a frame for $\LtR$.
Furthermore, $g\in C_c^\infty(\R^d)$ because the metaplectic operator
associated to $T$ is a multiplication by a chirp, which preserves the
compact support of $\tilde g$ \cite{Fol89}.
\end{proof}
%Similarly we can consider the lattice
%\begin{equation}\label{eq:utm}
%\Lambda=
%      \begin{pmatrix}
%       A & C \\
%       0 & B \\
%      \end{pmatrix}\Z^{2d},\quad\text{such that } CB^{-1}\text{ is symmetric.}
%\end{equation}
%We can transform the associated Gabor system into one generated by a
%lattice parametrized by a lower-triangular matrix~\eqref{eq:ltm} by
%taking a Fourier transform. Then we know that there exists a Gabor
%window $g\in C_c^\infty(\R^d)$ for lattice~\eqref{eq:ltm}. The
%Fourier transform is unitary, so it preserves the frame
%property \cite{C03} and an isomorphism on $\calS$, hence preserving
%the smoothness but not the compact support of $g$. Thus for lattices
%given by~\eqref{eq:utm} we showed the existence of a Schwarz class
%Gabor window.

%%%%%%%%%%%%%%%%%%%%%%%%%%%%%%%%%%%%%%%%%%%%%%%%%%%%
\section{Bivariate examples}\label{section:examples} In this section we provide several examples
which illustrate the geometric criteria established above for a family of
matrices in the case $d=2$.
\begin{proposition}\label{cor:common3matrices}
Let $q\in\Q^+$ and $m,n$ co-prime integers such that $q=m/n$. There
exists a common {convex} fundamental domain for $\Z^2$,
$\left(\begin{smallmatrix}
  q & 0 \\
  0 & 1/q \\
\end{smallmatrix}\right)\Z^2$ and
$\left(\begin{smallmatrix}
   1/ n & m \\
  0 & n \\
  \end{smallmatrix}\right)\Z^2$. Similarly, the lattices $\Z^2$, $\left(\begin{smallmatrix}
  n/m  & 0 \\
  0 & m/n \\
  \end{smallmatrix}\right)\Z^2$
and $\left(\begin{smallmatrix}
  n & 0 \\
 m & 1/n  \\
  \end{smallmatrix}\right)\Z^2$,
have a common convex fundamental domain.
\end{proposition}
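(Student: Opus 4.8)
The plan is to sidestep the (nonconstructive) Han--Wang machinery of Theorem~\ref{th:hanwang}, which yields no convexity and only handles pairs, and instead to exhibit one explicit convex region --- a parallelogram --- and verify directly that it tiles all three lattices at once. First I would normalize. Each of the three generating matrices has determinant $1$, so the three lattices share covolume $1$, which is the necessary compatibility. Then I reduce the third lattice to product form: since $(m,n)=mn\cdot(1/n,0)+(0,n)$ with $mn\in\Z$, column reduction gives $\begin{pmatrix}1/n & m\\ 0 & n\end{pmatrix}\Z^2=(1/n)\Z\times n\Z=\diag(1/n,n)\Z^2$. Thus in the first family all three lattices are rectangular, of the form $\diag(a,1/a)\Z^2$ with $a\in\{1,\ m/n,\ 1/n\}$; likewise the third matrix of the second family reduces to $\diag(n,1/n)\Z^2$.

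The key geometric device is that a single parallelogram can be a fundamental domain for many distinct lattices. If $\Omega$ is the parallelogram with edge vectors $e,f$, it suffices, for $\Omega$ to tile a lattice $L$ of covolume equal to the area of $\Omega$, that (i) $L\cap\R e=\Z e$ and (ii) $\phi(L)=\Z\,\phi(f)$, where $\phi$ is a linear functional with $\phi(e)=0$: condition (i) makes each line parallel to $e$ carry the correct $\Z e$-spacing, condition (ii) selects the stacking lines at the spacing matching $f$, and the independent shearing of the $e$-strips is exactly what lets one shape serve several lattices. I would take $e=(m,n)$ and $f=(1/n,0)$, so that $\Omega=\{se+tf:\ s,t\in[0,1)\}$ is convex with area $|m\cdot 0-n\cdot\tfrac1n|=1$. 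With $\phi(x,y)=nx-my$ (so $\phi(e)=0$, $\phi(f)=1$) one checks (i) and (ii) for $\Z^2$, $\diag(m/n,n/m)\Z^2$ and $\diag(1/n,n)\Z^2$: each slice $L\cap\R e$ collapses to $\Z e$, while $\phi(\Z^2)=n\Z+m\Z=\Z$, $\phi(\diag(m/n,n/m)\Z^2)=m\Z+n\Z=\Z$, and $\phi(\diag(1/n,n)\Z^2)=\Z+mn\Z=\Z$. In every identity the input is $\gcd(m,n)=1$ (via a Bézout relation $\alpha m+\beta n=1$), which is precisely the hypothesis of the proposition.

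The main obstacle is conceptual rather than computational. A convex translational tile must be centrally symmetric, hence a parallelogram or a centrally symmetric hexagon; a hexagon tiles rigidly and so pins down its lattice uniquely, and the three rectangular lattices here have mutually incompatible aspect ratios. The only possibility is therefore a parallelogram together with its one-parameter shear family, which forces the common edge $e$ to be a \emph{diagonal} vector such as $(m,n)$ rather than an axis vector --- locating this $e$, and checking that it is simultaneously primitive in all three lattices with a common projection, is the crux of the argument. Once $e=(m,n)$ is in hand the verification is routine; alternatively one can confirm it in one stroke through Lemma~\ref{lemma:vanishing}, since for a parallelogram $\fhat{\chi_\Omega}$ is a product of two sinc factors vanishing exactly when a suitable coordinate is a nonzero integer, and this is then tested on each dual lattice.

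Finally, the second family is handled by symmetry. The coordinate swap $(x,y)\mapsto(y,x)$ sends $\diag(a,1/a)\Z^2$ to $\diag(1/a,a)\Z^2$, and this matches the second-family lattices $\Z^2$, $\diag(n/m,m/n)\Z^2$, $\diag(n,1/n)\Z^2$ termwise with the first-family lattices. Reflecting the parallelogram $\Omega=\spa\{(m,n),(1/n,0)\}$ across the line $y=x$ therefore produces $\spa\{(n,m),(0,1/n)\}$, a convex common fundamental domain for the three lattices of the second family, completing the proof.
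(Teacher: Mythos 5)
Your proposal is correct, and its central object is exactly the paper's: the parallelogram spanned by $(1/n,0)$ and $(m,n)$, i.e.\ $\Omega=\left(\begin{smallmatrix}1/n & m\\ 0 & n\end{smallmatrix}\right)[0,1)^2$. What differs is the verification. The paper argues directly in coordinates: two points of $\Omega$ congruent modulo $\Z^2$ must coincide (this is where $\gcd(m,n)=1$ enters), so $\Omega$ packs $\Z^2$, and since $m(\Omega)=1$ it tiles; the lattice $\left(\begin{smallmatrix}m/n & 0\\ 0 & n/m\end{smallmatrix}\right)\Z^2$ is dispatched by a ``similar proof,'' and the third lattice is free because $\Omega$ is its fundamental parallelogram by construction. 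You instead column-reduce the third lattice to $\diag(1/n,n)\Z^2$ and invoke a general sufficient condition for a parallelogram with edges $e,f$ to tile a lattice $L$ --- namely $L\cap\R e=\Z e$ together with $\phi(L)=\Z\phi(f)$ for a linear functional $\phi$ annihilating $e$ --- which you then check uniformly for all three lattices via B\'ezout, and you handle the second family by an explicit reflection across $y=x$ rather than the paper's ``analogously.'' Your route buys uniformity and some conceptual insight (the shear family of a parallelogram explains why the common edge must be the diagonal vector $(m,n)$ rather than an axis vector), and the criterion is genuinely correct: choosing $g\in L$ with $\phi(g)=\phi(f)$ gives $L=\Z e\oplus\Z g$ with $g=f+ce$, from which the unique decomposition of any point modulo $L$ is immediate. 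The cost is that this criterion and the primitivity checks (i) are only sketched in your write-up; for completeness you should record that argument and verify $L\cap\R e=\Z e$ for the two diagonal lattices (e.g.\ $(am/n,\,bn/m)\in\R(m,n)$ forces $am=bn$, hence $n\mid a$ and the point is an integer multiple of $(m,n)$). Both steps are routine, so the proof stands; the digression on convex tiles being parallelograms or centrally symmetric hexagons is motivation only and carries no logical weight.
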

\begin{proof}
We shall only prove the first assertion, the second one follows analogously.

Let $\Omega=\left(\begin{smallmatrix}
1/n  & m \\
0 & n \\
\end{smallmatrix}\right)[0,1)^2$.
$\Omega$ is a convex set and $m(\Omega)=1$. Suppose that there exists
$(k,l)^T\neq\overrightarrow{0} \in \Z^2$ such that $\{\Omega +
(k,l)^T\}\cap\Omega\neq\varnothing$. Then there exist points
$(\alpha_1,\beta_1), (\alpha_2,\beta_2)\in[0,1)^2$ such that
$\tfrac{\alpha_1}n+m\beta_1+k=\tfrac{\alpha_2}n+m\beta_2
\quad\text{and}\quad n\beta_1+l=n\beta_2$. Therefore,
$\beta_2-\beta_1=\tfrac ln$, which implies that
$\alpha_1-\alpha_2=ml-kn\in\Z$. Since $0\le\alpha_1,\alpha_2<1$,
necessarily $\alpha_1=\alpha_2$, and also that $\beta_2-\beta_1=\tfrac
km$. Since $\gcd(m,n)=1$ and $0\le \beta_2-\beta_1<1$, this is possible
only if $k=l=0$. Thus
$(\Omega+\Z^2{\setminus}\{0\})\cap\Omega=\varnothing$. Since
$m(\Omega)=1$, $\Omega$ is a fundamental domain for $\Z^2$.

A similar proof shows that $\Omega$ is also a fundamental domain for the lattice $\left(\begin{smallmatrix}
  m/n & 0 \\
  0 & n/m  \\
\end{smallmatrix}\right)\Z^2$.
%Suppose that there exists $(k,l)^T\neq \overrightarrow{0} \in \Z^2$
%such that $\{\Omega +
%\begin{pmatrix}
%  m/n & 0 \\
%  0 & n/m  \\
%\end{pmatrix}(k,l)^T \}\cap\Omega\neq\varnothing$. Then there exist
%points $(\alpha_1,\beta_1), (\alpha_2,\beta_2)\in[0,1)^2$ such that
%\[\frac{\alpha_1}n+m\beta_1+\frac {km}n=\frac{\alpha_2}n+m\beta_2
%\quad\text{and}\quad n\beta_1+\frac {ln}m=n\beta_2\] The equality
%\[\beta_2-\beta_1=\frac lm,\]
%again leads to the conclusion that $\alpha_2-\alpha_1=mk-ln$ is an
%integer. Because $0\le\alpha_1,\alpha_2<1$, we have
%$\alpha_1=\alpha_2$, and
%\[\beta_2-\beta_1=\frac kn.\]
%Again, since $\gcd(m,n)=1$ and $0\le \beta_2-\beta_1<1$, this is
%possible only if $k=l=0$. Thus
%$(\Omega+\Z^2{\setminus}\{0\})\cap\Omega=\varnothing$. Because
%$m(\Omega)=1$, $\Omega$ is a fundamental domain also for $
%\begin{pmatrix}
%  m/n & 0 \\
%  0 & n/m  \\
%\end{pmatrix}
%\Z^2$. Since $\Omega$ is convex, it is automatically star-shaped.
\end{proof}

\begin{figure}
\[
 \beginpicture \color{black}
 \setcoordinatesystem units <1cm,1cm> \linethickness 0.3mm
 %
 %
 %
 %%%% COORDINATE SYSTEM %%%%%%%%%%%%%%%%%%%%%%%%%%%%%%%%%%%%
 %
 % AXIS
 \arrow <0.4cm> [0.375,0.75]     from 1 0.5 to 1 10
 \arrow <0.4cm> [0.375,0.75]     from 0.5 1 to 7 1
 %
 % TICKS, LABELS (x-axis)
     \setsolid
%     \plot 1 0.8 1 1.2 /
%     \plot 2 0.9 2 1.1 /
%     \put {\small 2}     at 5 0.4
     \plot 4.5 0.9 4.5 1.1 /
      \plot 2 0.9 2 1.1 /
%     \plot 5 0.8 5 1.2 /
     \put {\small 0}     at 0.7 0.6
     \plot 5.5 0.9 5.5 1.1 /
%     \plot 6 0.9 6 1.1 /
    % \plot 10 0.9 10 1.1 /
%     \plot 8 0.9 8 1.1 /
%     \put {\small 1}     at 3 0.4
    % \put {\small 0}     at 1 0.4
%     \plot 7 0.8 7 1.2 /
%     \put {\small 3}     at 7 0.4
%     \plot 6 0.9 6 1.1 /
     \put {$m+\tfrac1n$}     at 5.5 0.4
     \put {$x_1$} at 7.5 1
     \put {$x_2$} at 0.7 9.5
\plot 0.9 9 1.1 9 /
%\plot 0.9 4 1.1 4 /
%     \put {\small 1}     at 0.7 4
 %    \put {\bf C.} at .3 1.5
 %
         \linethickness 2mm
           \plot 1 1 4.5 9 /
\plot 2 1 5.5 9 /
\plot 4.5 9 5.5 9 /
\put {\textit{\small m}} at 4.5 0.4
\put {\textit{\small n}} at 0.7 9
\put {$\tfrac 1n$} at 2 0.4
\put {$\Omega$} at 3.2 5
 \endpicture
 \]
 \begin{caption}{The set $\Omega$ constructed in Proposition~\ref{cor:common3matrices}.}\label{figure:example}
\end{caption}
\end{figure}

To illustrate strength and weakness of our method, we shall consider the
following, apparently simple example.
\begin{corollary}\label{prop:diagonal}
If $\Lambda=a\Z\times b\Z\times c\Z\times d\Z$  with
$d(\Lambda)=abcd<1$ satisfies
\begin{eqnarray}
  (1)\ ac<1,bd<1,\quad \text{or}\quad (2)\  abcd<1/2 ,\quad \text{or}\quad (3)\  \sqrt{\frac{ac}{bd}}\in \Q\notag
  \end{eqnarray}
 then there exists $g\in C_c^\infty(\R^2)$ such that $(g,\Lambda)$
is a tight frame for $L^2(\R^2)$.
\end{corollary}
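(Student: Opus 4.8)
The target statement (Corollary~\ref{prop:diagonal}) asserts that for a diagonal lattice $\Lambda=a\Z\times b\Z\times c\Z\times d\Z$ with $d(\Lambda)=abcd<1$, any of three conditions suffices to produce a smooth compactly supported tight frame window. Let me think about how to reduce each condition to the machinery already established, principally Corollary~\ref{cor:main}.

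The key reduction: Corollary~\ref{cor:main} needs $d(\Lambda)^{1/d}A\Z^d$ and $B^{-T}\Z^d$ to share a bounded star-shaped common fundamental domain. Here $A=\mathrm{diag}(a,c)$ and $B=\mathrm{diag}(b,d)$, so $B^{-T}=\mathrm{diag}(1/b,1/d)$. The scaled lattice $d(\Lambda)^{1/2}A\Z^2 = (abcd)^{1/2}\mathrm{diag}(a,c)\Z^2$ and $\mathrm{diag}(1/b,1/d)\Z^2$ are both rectangular lattices, so I'd be asking when two axis-aligned rectangular lattices admit a common star-shaped fundamental domain. Note both have the same covolume after scaling (this is exactly why $d(\Lambda)^{1/d}$ appears), so Theorem~\ref{th:hanwang} guarantees a common fundamental domain exists; the issue is getting star-shapedness (and boundedness).

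**The three cases.**

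For condition (3), $\sqrt{ac/bd}\in\Q$, I expect the cleanest path: after scaling, the two rectangular lattices have side-length ratios that are commensurable, and I'd apply Proposition~\ref{cor:common3matrices} (with $q$ the relevant rational) to produce an explicit \emph{convex}—hence star-shaped—common fundamental domain. Convexity is the strongest form of star-shapedness, so this case should follow almost directly by matching the diagonal entries to the $q=m/n$ normal form there. For conditions (1) and (2), rationality may fail, so I cannot use the explicit construction; instead I would build a common fundamental domain by hand for two rectangular lattices and argue star-shapedness directly. The natural candidate is a ``staircase'' or ``brick''-type region: tile $\R^2$ by one rectangle and cut/rearrange finitely many pieces to also tile by the other. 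Under (1), the conditions $ac<1$ and $bd<1$ should force the two rectangles to be comparable enough in each coordinate that a simple L-shaped or rectangular common domain can be taken star-shaped about a suitable interior point. Under (2), the slack $abcd<1/2$ gives enough room that even when the rectangles are badly proportioned one can still arrange a bounded star-shaped common domain, likely by a dyadic/splitting argument controlling how many pieces are needed.

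**The main obstacle and how I'd handle it.**

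The hard part will be guaranteeing \emph{star-shapedness} (not merely existence) of the common fundamental domain in cases (1) and (2), since Theorem~\ref{th:hanwang} is silent on shape. For two rectangular lattices $r_1\Z\times r_2\Z$ and $s_1\Z\times s_2\Z$ of equal area, I'd aim to construct a common fundamental domain that is star-shaped about the origin by taking a region whose boundary is monotone in each quadrant—e.g.\ a union of a central rectangle with rectangular ``tabs'' that are cut from one lattice's cell and translated by lattice vectors of the other. The geometric inequalities in (1) and (2) are precisely what I'd expect to ensure these tabs can be attached without breaking the line-of-sight condition from the origin. Once a bounded star-shaped common fundamental domain is in hand, Corollary~\ref{cor:main} delivers the nonnegative $g\in C_c^\infty(\R^2)$ immediately, so essentially all the work is in the planar tiling geometry. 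I would organize the proof as three short lemmas, one per condition, each constructing the requisite star-shaped domain, followed by a one-line appeal to Corollary~\ref{cor:main}.
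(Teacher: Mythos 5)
Your handling of case (3) is correct and coincides with the paper's (Theorem~\ref{th:separable1} via the explicit convex domain of Proposition~\ref{cor:common3matrices}), but your plan for cases (1) and (2) contains a fatal gap: the bounded star-shaped common fundamental domain you intend to construct does not exist in general, and the paper itself proves this. First, note the pairing: $\Lambda=(a\Z\times b\Z)\times(c\Z\times d\Z)$, so $A=\diag(a,b)$ and $B=\diag(c,d)$, not $\diag(a,c)$ and $\diag(b,d)$ as you write (your pairing would make the relevant ratio $\sqrt{ab/cd}$, which does not even match condition (3)). With the correct pairing, the two lattices that Corollary~\ref{cor:main} requires to share a star-shaped fundamental domain are $d(\Lambda)^{1/2}A\Z^2$ and $B^{-T}\Z^2$; applying the invertible map $\diag(c,d)$ to both, this is exactly the pair $\diag(\rho,1/\rho)\Z^2$ and $\Z^2$ with $\rho=\sqrt{ac/bd}$. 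Conditions (1) and (2) allow irrational $\rho$: for instance $a=0.9$, $b=0.45$, $c=d=1$ satisfies $ac<1$, $bd<1$ and $abcd=0.405<1/2$, yet yields precisely the pair $\diag(\sqrt2,1/\sqrt2)\Z^2$ and $\Z^2$, for which Proposition~\ref{prop:example-no-cssfd} proves that \emph{no} compact star-shaped common fundamental domain exists. So no staircase, L-shape, or tab construction can succeed; the obstruction is not a matter of geometric ingenuity. This is exactly what the paper means when it remarks that the rationality in case (3) is critical for the common-fundamental-domain method.

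The paper circumvents this by not using common fundamental domains in cases (1) and (2); instead it exploits the covolume slack $abcd<1$ through arguments whose hypotheses do not force equal covolumes. For (1) it tensors two univariate frames: taking $g_1,g_2\in C_c^\infty(\R)$ with $\chi_{[0,a]}\le g_1\le \chi_{[(ac-1)/(2c),(ac+1)/(2c)]}$ and $\chi_{[0,b]}\le g_2\le\chi_{[(bd-1)/(2d),(bd+1)/(2d)]}$ makes $(g_1,a\Z\times c\Z)$ and $(g_2,b\Z\times d\Z)$ frames, and then $g_1\otimes g_2$ works for $\Lambda$. For (2), where one may assume $ac>1$ and hence $bd<1/2$, the paper first applies Theorem~\ref{th:symplectic lattices} with the symplectic matrix $M=\diag(c,d,1/c,1/d)$ to reduce to $\diag(ac,bd,1,1)\Z^4$, and then uses the \emph{sheared} parallelogram $\Omega=\left(\begin{smallmatrix}ac&0\\1/2&bd\end{smallmatrix}\right)[0,1)^2$: this is a fundamental domain for $\diag(ac,bd)\Z^2$, and the shear combined with $bd<1/2$ makes $\Omega_\epsilon$, $\epsilon=(1-2abcd)/(4ac)$, a packing set for $\Z^2$ despite $ac>1$; a smooth window squeezed between $\chi_\Omega$ and $\chi_{\Omega_\epsilon}$ (as in Theorem~\ref{th:main}) then gives the frame. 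The key structural point you missed is that Theorem~\ref{th:main} only needs a fundamental domain for one lattice whose $\epsilon$-neighborhood \emph{packs} the other — two lattices of different covolumes — whereas Corollary~\ref{cor:main} demands a common tiling of two equal-covolume lattices, which is provably unattainable for the irrational ratios permitted by (1) and (2).
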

\begin{proof} (1) If $ac<1$ and $bd<1$, then any $g_1\in
C_c^\infty(\R)$ with $\chi_{[0,a]}\leq g_1\leq \chi_{[(ac-1)/(2c),(ac+1)/(2c)]}$
and
  $g_2\in
C_c^\infty(\R)$ with $\chi_{[0,b]}\leq g_2\leq \chi_{[(bd-1)/(2d),(bd+1)/(2d)]}$
guarantees that $ (g_1,a\Z\times c\Z)$ and $(g_2,b\Z\times d\Z)$ are
frames. A simple tensor argument then implies that $(g_1\otimes
g_2,a\Z\times b\Z\times c\Z\times d\Z)$ is a frame for $L^2(\R^d)$. Note that
the same line of argument shows that if either $ac>1$ or $bd>1$, then
exists no $g_1,g_2\in C_c^\infty(\R)$ with $(g_1\otimes g_2,a\Z\times
b\Z\times c\Z\times d\Z)$ is a frame for $L^2(\R^d)$ \cite{PRtr}.

\noindent (2) It suffices to consider $abcd<1/2$, and $ac>1$ or $bd>1$, as
else, (1) would apply. Without loss of generality, we consider $ac>1$, and,
hence $bd<1/2$. Moreover, applying Theorem~\ref{th:symplectic lattices}
with the symplectic matrix $M=\diag(c, d, 1/c, 1/d)$ implies  that the
existence of $g\in C_c^\infty(\R^2)$ with $(g,\Lambda)$ being a frame for
$L^2(\R^2)$ follows from the respective statement for $\Lambda'=\diag(c, d,
1/c, 1/d)\Lambda=\diag(ac, bd, 1, 1)\Z^4$. With
$\Omega=\left(\begin{smallmatrix}
    ac  & 0 \\
    1/2  & bd  \\
\end{smallmatrix}\right)[0,1)^2$, $\epsilon =\frac{1-2abcd}{4ac}$, any $g\in C_c^\infty(\R^2)$ with
$$ \chi_\Omega \leq g \leq \chi_{\Omega_\epsilon} $$ has the property that
$(g,\Lambda')$ is a frame for $L^2(\R^2)$.

\noindent (3) Theorem~\ref{th:separable1} applies whenever there exist
$m,n\in\Z,\alpha\in\R$ such that $\alpha m^2=ac$ and $\alpha n^2=bd$,
which is equivalent to $\sqrt{\tfrac{ac}{bd}}\in\Q$.
\end{proof}

The conditions on $\Lambda$ presented in Proposition~\ref{prop:diagonal}
are not necessary for the existence of $g\in C_c^\infty(\R^2)$ with
$(g,\Lambda)$ being a Gabor frame for $L^2(\R^d)$.  In fact, if for $g\in
C_c^\infty(\R^2)$, $(g,M\Z^{4})$ is a Gabor frame for $L^2(\R^2)$, then
exists an open neighborhood $U$ of $M$ in $GL(\R^4)$, such that
$(g,M'\Z^{4})$ is a Gabor frame for $L^2(\R^d)$ for all $M'\in U$
\cite{FeiKai}. For results where rationality of lattices plays a central role, see
results known as Janssen's tie \cite{Jans}.

Below, we show that the condition $abcd<1$ and the use of diagonal
matrices with rational entries  in Corollary~\ref{prop:diagonal} is critical for
our method to be applicable.    This clearly  illustrates the limitations of the
method described in Theorem~\ref{th:ONBexist}.

\begin{proposition}\label{prop:example-no-cssfd}
There exists no fundamental domain $\Omega$ for $\Z\times \frac 1 2 \Z$
with $\Omega_\epsilon$, $\epsilon>0$, is a packing set  for $\Z \times \Z$.
Consequently, there exists no common star-shaped fundamental domain for
$\Z^2$ and $\left(\begin{smallmatrix}
    \sqrt2  & 0 \\
    0  & \frac1{\sqrt2}  \\
\end{smallmatrix}\right)\Z^2$.
\end{proposition}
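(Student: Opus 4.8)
The plan is to prove the first assertion by converting "packing'' into a separation statement and then deriving a purely topological contradiction, and to obtain the second assertion as a direct application of Proposition~\ref{prop:starshaped}.

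First I would reformulate the packing condition. Since $\Omega_\epsilon$ is the \emph{open} $\epsilon$-neighborhood of $\Omega$, two translates $\Omega_\epsilon+\mu$ and $\Omega_\epsilon+\mu'$ meet in positive measure if and only if they meet at all, which happens precisely when $\mathrm{dist}(\Omega+\mu,\Omega+\mu')<2\epsilon$. Hence $\Omega_\epsilon$ is a packing set for $\Z\times\Z$ exactly when the family $\{\Omega+\mu\}_{\mu\in\Z^2}$ is pairwise $2\epsilon$-separated, i.e.\ $\mathrm{dist}(\Omega,\Omega+\mu)\ge 2\epsilon$ for all $\mu\in\Z^2\setminus\{0\}$. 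Next I would use that $\Z^2\subset\Z\times\tfrac12\Z$ has index $2$, with coset representative $(0,\tfrac12)$. As $\Omega$ tiles $\Z\times\tfrac12\Z=\Z^2\sqcup(\Z^2+(0,\tfrac12))$, periodizing $\chi_\Omega$ over $\Z^2$ shows that $E:=\Omega+\Z^2$ and $O:=\Omega+(0,\tfrac12)+\Z^2$ partition $\R^2$ up to a null set, each of density $\tfrac12$; in particular $\Omega$ is a $\Z^2$-packing set. Because translation by $(0,\tfrac12)$ is an isometry carrying $E$ onto $O$ and commuting with $\Z^2$, the separation assumption forces the translates $\{\Omega+(0,\tfrac12)+\nu\}_{\nu\in\Z^2}$ to be pairwise $2\epsilon$-separated as well.

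Then comes the contradiction, where I assume $\Omega$ bounded (which is automatic in the application below). Set $K_\mu=\overline{\Omega}+\mu$ and $L_\nu=\overline{\Omega}+(0,\tfrac12)+\nu$; these are bounded, the $K_\mu$ are pairwise $2\epsilon$-separated, so are the $L_\nu$, and $\bigcup_\mu K_\mu\cup\bigcup_\nu L_\nu=\R^2$ (the closure of the co-null set $E\cup O$). The set $\bigcup_\mu K_\mu$ is closed, all of its components are bounded, and it is uniformly sparse (each blob sits in a fixed-radius ball about $\mu$ and consecutive blobs are at distance $\ge2\epsilon$); consequently its complement contains an unbounded connected ``sea'' $W$, since any two far-apart complementary points can be joined by a path that detours around each blob it meets through that blob's $2\epsilon$-collar, which lies in the complement. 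Now $W$ is unbounded and connected and $W\subseteq\bigcup_\nu L_\nu$. But the $L_\nu$ are pairwise $2\epsilon$-separated closed sets, so each $W\cap L_\nu$ is relatively clopen in $W$ (closed because $L_\nu$ is closed, open because within distance $\epsilon$ of a point of $L_\nu$ no other $L_{\nu'}$ occurs). Connectedness of $W$ then forces $W\subseteq L_{\nu_0}$ for a single $\nu_0$, making $W$ bounded — the desired contradiction, which proves the first assertion for bounded $\Omega$.

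The main obstacle is exactly this topological step: making rigorous that a uniformly sparse family of pairwise-separated bounded blobs cannot cut $\R^2$ into a complement whose components are themselves bounded. Care is needed with the measure-zero and closure bookkeeping, and with non-simply-connected $\Omega$, where the complement of $\bigcup_\mu K_\mu$ also has bounded components trapped inside blobs; the point is that the unbounded sea $W$ always survives, and it is $W$ that cannot fit inside the separated $L_\nu$. (A genuinely unbounded $\Omega$ would need an extra argument but does not occur in the application.) Finally, for the second assertion I would argue by contradiction: if $\Z^2$ and $D\Z^2$ with $D=\diag(\sqrt2,\tfrac1{\sqrt2})$ admitted a bounded star-shaped common fundamental domain, I apply Proposition~\ref{prop:starshaped} with $d=2$, $A=\diag(1,\tfrac12)$ and $B=I$. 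Then $0<|\det A|=\tfrac12<1=|\det B|$, $|\det B/\det A|^{1/2}=\sqrt2$, and $\sqrt2\,A\Z^2=\diag(\sqrt2,\tfrac1{\sqrt2})\Z^2=D\Z^2$, so the hypothesis of Proposition~\ref{prop:starshaped} is precisely the assumed star-shaped common fundamental domain of $D\Z^2$ and $B\Z^2=\Z^2$. The proposition then yields a set $\Omega$ that tiles $A\Z^2=\Z\times\tfrac12\Z$ and whose $\epsilon$-neighborhood packs $B\Z^2=\Z\times\Z$, with $\Omega$ bounded since it is a dilate of the bounded star-shaped domain. This contradicts the first assertion, so no common star-shaped fundamental domain for $\Z^2$ and $D\Z^2$ can exist.
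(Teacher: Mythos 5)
Your reductions are sound and your overall strategy parallels the paper's own: you convert packing of $\Omega_\epsilon$ for $\Z^2$ into $2\epsilon$-separation of the translates $\{\Omega+\mu\}_{\mu\in\Z^2}$, use the coset decomposition $\Z\times\tfrac12\Z=\Z^2\sqcup\bigl(\Z^2+(0,\tfrac12)\bigr)$ to obtain two pairwise-separated systems that jointly cover $\R^2$, and then appeal to planar topology; your derivation of the second assertion from Proposition~\ref{prop:starshaped} (with $A=\diag(1,\tfrac12)$, $B=I$) is correct and is just a repackaging of the paper's inline dilation argument. The genuine gap is exactly the step you flag yourself: the existence of the unbounded connected ``sea'' $W$ in the complement of $\bigcup_\mu K_\mu$. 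The detour heuristic does not suffice as stated: $\overline\Omega$ may be disconnected or have holes, so a blob's $2\epsilon$-collar need not be connected, and a detour inside one collar may itself be blocked by further blobs, so the construction has no evident termination; making it rigorous requires genuine planar topology (unicoherence or a covering-dimension argument). To be fair, the paper is no more rigorous at the corresponding point --- it asserts ``clearly, there must be two such sets in the corona of $\Omega$ which have a common boundary point'' without proof. A clean way to close the gap (yours or the paper's) is Lebesgue's covering lemma: the sets $K_\mu=\overline\Omega+\mu$ and $L_\nu=\overline\Omega+(0,\tfrac12)+\nu$, $\mu,\nu\in\Z^2$, form a locally finite closed cover of $\R^2$ (local finiteness follows from the separation) by sets of diameter at most $R$, $R$ being the diameter of $\overline\Omega$; on a square of side length greater than $R$ no set of the cover meets two opposite sides, so some point lies in at least three sets of the cover, hence in two sets of the same family --- contradicting their disjointness. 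This replaces the whole ``sea'' construction.

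Your boundedness restriction is a second, smaller gap, since the first assertion is stated for arbitrary measurable fundamental domains --- and boundedness genuinely matters, because the topological claim is false without it: two systems of pairwise-separated \emph{unbounded} closed sets, e.g.\ the vertical strips $[2k,2k+1]\times\R$ and $[2k+1,2k+2]\times\R$, do cover the plane with each system internally $1$-separated. Fortunately boundedness can be forced in one line rather than deferred to ``the application'': packing of $\Omega_\epsilon$ for $\Z^2$ gives $m(\Omega_\epsilon)\le 1$, whereas an unbounded $\Omega$ contains points $x_n$ with $\|x_{n+1}\|_2>\|x_n\|_2+2\epsilon$, so $\Omega_\epsilon$ would contain infinitely many disjoint $\epsilon$-balls and have infinite measure. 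With that observation and the covering-lemma fix, your argument proves the full first assertion; the remaining ingredients --- the equivalence of packing with $2\epsilon$-separation, the clopen argument forcing $W$ into a single $L_{\nu_0}$, and the application of Proposition~\ref{prop:starshaped} --- are all correct.
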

\begin{proof}
Suppose that
 $\Omega$ is a tiling set for $\Z\times \frac 1 2 \Z$. If  $\Omega_\epsilon$ is a packing set  for
 $\Z\times \Z$, then
$\{\Omega+(m,n):m,n\in\Z\}$ have no boundary points in common.
%That is, with $\delta U$ being the boundary of the set $U$, $\delta\Omega\cap\delta(\Omega+(m,n))=\varnothing$.
Hence all sets $\{\Omega+(m,\tfrac n 2):m,n\in\Z\}$ with common boundary
point with $\Omega$ must be of the form $\Omega+(m,n+\tfrac12)$.
Clearly, there must be two such sets in the corona of $\Omega$ which have
a common boundary points. But this is a contradiction as the system
$\{\Omega+(m,n+\tfrac12):m,n\in\Z\}$ is a translate of the system
$\{\Omega+(m,n):m,n\in\Z\}$ and hence all of its members should have
disjoint boundaries.

To obtain the second assertion, assume that there exists a compact
star-shaped set $\Omega'$ serving as a common fundamental domain for
both lattices. Then there exists $x\in\R^2,\epsilon>0$ such that for
$\Omega=D_{\frac1{\sqrt2}}\Omega'+x$, we have
$\Omega\subseteq\Omega_\epsilon\subseteq\Omega'$. Note that
$\Omega$ is a tiling set
for $\frac1{\sqrt2}\left(\begin{smallmatrix} \sqrt2&0\\
0&\frac{\sqrt2}2
\end{smallmatrix}\right)\Z^2
=\left(\begin{smallmatrix} 1&0\\
0&\frac{1}2
\end{smallmatrix}\right)\Z^2$, contradicting the first assertion.

\end{proof}

\begin{theorem}\label{th:separable1}
Let $m,n\in\Z$ be relatively prime. Let $\Lambda=A\Z^2\times B\Z^2$ be a lattice in $\R^4$. Whenever $B^TA$ is of the form
\begin{enumerate}
          \item $\alpha I$, $|\alpha |< 1$;
          \item  $\left(\begin{smallmatrix}
    m^2\alpha  & 0 \\
    0 & n^2\alpha  \\
  \end{smallmatrix}\right)
$, where $|\alpha |< (mn)^{-1}$;
\item $
\left(\begin{smallmatrix}
    \alpha  & mn\alpha  \\
    0 & n^2\alpha  \\
\end{smallmatrix}\right)$, where %$m,n\in\Z$ are relatively prime, and
  $|\alpha |<n^{-1}$;
 or
          \item $
\left(\begin{smallmatrix}
    n^2\alpha  & 0 \\
    mn\alpha  & \alpha  \\
\end{smallmatrix}\right)$, where %$m,n\in\Z$ are relatively prime, and
$|\alpha |<n^{-1}$,
        \end{enumerate}
then there exists a function $g\in C_c^\infty(\R^2)$ such that
$(g,\Lambda)$ is a Gabor frame for $L^2(\R^2)$.
\end{theorem}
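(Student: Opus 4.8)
The plan is to derive the theorem from Corollary~\ref{cor:main}: in each of the four cases I would produce a bounded, star-shaped (in fact convex) common fundamental domain for the pair of lattices $d(\Lambda)^{1/2}A\Z^2$ and $B^{-T}\Z^2$, drawing the domain from the explicit convex fundamental domains supplied by Proposition~\ref{cor:common3matrices}. First the density bookkeeping: writing $S=B^TA$ one has $|\det S|=|\det A\,\det B|$, hence $d(\Lambda)=1/|\det S|$ and $d(\Lambda)^{1/2}=|\det S|^{-1/2}$. Computing $\det S$ in the four cases gives $|\det S|=\alpha^2,\ m^2n^2\alpha^2,\ n^2\alpha^2,\ n^2\alpha^2$ respectively, and the stated bounds on $|\alpha|$ are exactly equivalent to $|\det S|<1$, i.e. to $d(\Lambda)>1$, which is the regime in which Corollary~\ref{cor:main} operates.

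The crux is a single common linear change of variables. The existence of a bounded convex common fundamental domain for two full-rank lattices $L_1,L_2$ is preserved when one applies any $T\in GL(2,\R)$ to both, since $T$ carries tilings to tilings and preserves convexity and boundedness. I would take $T=B^T$, so that $T(B^{-T}\Z^2)=\Z^2$, while
\[
T\big(d(\Lambda)^{1/2}A\Z^2\big)=|\det S|^{-1/2}\,B^TA\,\Z^2=|\det S|^{-1/2}S\,\Z^2 .
\]
Thus it suffices to exhibit a convex common fundamental domain for $\Z^2$ and the single normalized lattice $|\det S|^{-1/2}S\,\Z^2$, and then pull it back by $T^{-1}=B^{-T}$ to obtain the domain required by Corollary~\ref{cor:main}.

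Finally I would evaluate $|\det S|^{-1/2}S$ case by case. Up to an overall sign and unimodular integer column operations, none of which change the lattice (so that one may assume $m,n>0$), the normalized generator becomes, respectively, $I$, $\diag(m/n,\,n/m)$, $\left(\begin{smallmatrix}1/n&m\\0&n\end{smallmatrix}\right)$, and $\left(\begin{smallmatrix}n&0\\m&1/n\end{smallmatrix}\right)$. In case~1 the two lattices coincide and any bounded convex fundamental domain of $\Z^2$ serves. In cases~2 and~3 the first assertion of Proposition~\ref{cor:common3matrices} provides a convex common fundamental domain for $\Z^2$ together with $\diag(m/n,n/m)\Z^2$, resp. $\left(\begin{smallmatrix}1/n&m\\0&n\end{smallmatrix}\right)\Z^2$; in case~4 the second assertion does the same for $\left(\begin{smallmatrix}n&0\\m&1/n\end{smallmatrix}\right)\Z^2$. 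These domains are bounded convex parallelograms, hence star-shaped, so Corollary~\ref{cor:main} furnishes the desired nonnegative $g\in C_c^\infty(\R^2)$ with $(g,\Lambda)$ a (tight) frame.

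I expect the only genuine obstacle to be identifying the correct normalization $T=B^T$, that is, matching $B^{-T}\Z^2$ to $\Z^2$ rather than matching $d(\Lambda)^{1/2}A\Z^2$ to it; this is precisely what makes $|\det S|^{-1/2}S$ land on the generator matrices appearing in Proposition~\ref{cor:common3matrices}. Once this choice is made, the per-case matrix evaluations and the sign and unimodular reductions are routine, and the hypothesis that $m,n$ be relatively prime flows through exactly because it is needed in the construction underlying Proposition~\ref{cor:common3matrices}.
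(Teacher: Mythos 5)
Your proposal is correct, and it shares the paper's essential reduction: in both arguments the decisive object is $S=B^TA$, its normalization $|\det S|^{-1/2}S\,\Z^2$ is identified case by case (after the same sign/unimodular cleanup) with the lattices of Proposition~\ref{cor:common3matrices}, and the smooth window ultimately comes from Corollary~\ref{cor:main}. Where you genuinely diverge is the transfer step back to $\Lambda$. The paper factors $\left(\begin{smallmatrix}A&0\\0&B\end{smallmatrix}\right)=\left(\begin{smallmatrix}B^{-T}&0\\0&B\end{smallmatrix}\right)\left(\begin{smallmatrix}B^TA&0\\0&I\end{smallmatrix}\right)$, produces a frame with $C_c^\infty$ window for the reduced separable lattice $(B^TA)\Z^2\times\Z^2$ (it cites Theorem~\ref{th:ONBexist} there, but what is actually used is Corollary~\ref{cor:main}), and then carries that frame to $\Lambda$ by the metaplectic operator of the symplectic matrix $\left(\begin{smallmatrix}B^{-T}&0\\0&B\end{smallmatrix}\right)$, which is a dilation and hence preserves $C_c^\infty(\R^2)$. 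You instead transport the geometry rather than the window: applying $T=B^T$ to the pair $\big(d(\Lambda)^{1/2}A\Z^2,\,B^{-T}\Z^2\big)$ yields the normalized pair $\big(|\det S|^{-1/2}S\Z^2,\,\Z^2\big)$, whose convex parallelogram fundamental domain from Proposition~\ref{cor:common3matrices} you pull back by $B^{-T}$, so that Corollary~\ref{cor:main} applies directly to $\Lambda$ itself. Both transfers are sound: yours because invertible linear maps carry tilings to tilings and preserve convexity and boundedness, the paper's because unitary metaplectic dilations preserve the frame property, tightness, and compact support. Your route is more elementary --- it needs neither Theorem~\ref{th:symplectic lattices} nor metaplectic operators, and it hands you the tight frame of Corollary~\ref{cor:main} on $\Lambda$ directly --- while the paper's route showcases the symplectic-transfer technique it needs anyway in situations where a geometric pullback cannot work, e.g.\ Corollary~\ref{co:ltm}, where the relevant metaplectic operator is a chirp multiplication rather than a dilation and the target lattice is not separable.
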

\begin{proof}
%%%%%%%%%%%%%%%%%%%%%%%%%%%%%%%%%%%%%%%%%%%%%%%%%%%%%%%%%%%%%%%%
We have
\[    \begin{pmatrix}
      A & 0 \\
      0 & B \\
    \end{pmatrix}=
    \underbrace{\begin{pmatrix}
      B^{-T} & 0 \\
      0 & B \\
    \end{pmatrix}}_M
    \begin{pmatrix}
      B^TA & 0 \\
      0 & I \\
    \end{pmatrix}.
\]
This shows that $\Lambda=M((B^TA)\Z^2\times\Z^2)$ with $M$ symplectic.
Since $|\det B^TA| \le1$, we can rescale $B^TA\Z^2$ to make its density 1.
Then $ |\det B^T A|^{-1/2} B^TA \Z^2$ is respectively of the form
\[\Z^2, \begin{pmatrix}
  m/n & 0 \\
  0 & n/m  \\
\end{pmatrix}\Z^2, \begin{pmatrix}
  1/n & m \\
  0 & n \\
\end{pmatrix}\Z^2,\begin{pmatrix}
  n & 0 \\
  m & 1/n \\
\end{pmatrix}\Z^2\] Proposition~\ref{cor:common3matrices} assures
the existence of a common convex fundamental domain for $(B^TA)\Z^2$
and $\Z^2$ accordingly. By Theorem~\ref{th:ONBexist} there exists $g'\in
C_c^\infty(\R^2)$ such that $(g',(B^TA)\Z^2\times\Z^2)$ is a Gabor frame for
$L^2(\R^2)$. The matrix $M$ is symplectic, and its associated metaplectic
operator $\mu(M)$ from Theorem~\ref{th:symplectic lattices} is the dilation
$(\mu(M)h)(x)=|\det B|^{-\frac12}h(B^{-1}x)$, \cite{Fol89}. Hence,
$g=\mu(M)^\ast g'\in C_c^\infty(\R^2)$ and $(g,\Lambda)$ is a Gabor frame
for $L^2(\R^2)$.
\end{proof}

Note that it is not known for which $\alpha,\beta$ there exists
$g\in\calS(\R^2)$ such that $(g,\Z^2\times\left(\begin{smallmatrix} \alpha&0\\
0&\beta
\end{smallmatrix}\right)$ is Gabor frame for $L^2(\R^d)$. If $g_0(x)=e^{-\pi\|x\|_2^2}$ is a
Gaussian, then $(g_0,\Z^2\times\left(\begin{smallmatrix} \alpha&0\\
0&\beta
\end{smallmatrix}\right)$ is a frame for $L^2(\R^d)$ if $\alpha,\beta<1$ \cite{PRtr}.

\section{Appendix: Proof of Proposition~\ref{prop:noexistance-ronshen}}

The operator \[S_{g,\Lambda}f=\sum_{\lambda\in\Lambda} \langle
f,\pi(\lambda)g\rangle\,\pi(\lambda)g,\quad f\in \LtR\] is called a Gabor frame
operator. It is positive and self-adjoint if $(g,\Lambda)$ is a frame for
$L^2(\R^d)$ \cite{G01,C03}. Gabor frames possess a very useful
reconstruction formula:
\begin{equation*}\label{eq:framexp}
f=\sum_{\lambda\in\Lambda}\langle f,\pi(\lambda)g\rangle\,
\pi(\lambda)\gamma=\sum_{\lambda\in\Lambda}\langle
f,\pi(\lambda)\gamma\rangle\,\pi(\lambda)g,\quad f\in\LtR
\end{equation*}
with $\gamma=S_{g,\Lambda}^{-1}g$ being the so-called canonical dual
window \cite{G01,C03}.

$S_{g,A\Z^d\times B\Z^d}$ can be represented in matrix form
\cite{Wa92,RS97}. For that purpose, we define the bi-infinite
cross-ambiguity Gramian matrix
\begin{equation}\label{eq:cross-ambig}
\begin{aligned}
&\mathbf{G}(x)=(G_{jk}(x))_{j,k\in\Z^d}:\\
&G_{jk}(x)=|\det B|^{-1}\sum_{\ell \in\Z^d}\overline{g(x-B^{-T}
k-A\ell )}g(x-B^{-T} j-A\ell ).
\end{aligned}
\end{equation}

 Below, $W(\R^d)$ denotes the Wiener space,
consisting of all functions such that the norm
\[\|f\|_W=\sum_{k\in\Z^d}\|f\cdot T_k\chi_{[0,1)^d}\|_\infty\] is finite \cite{C03}.

\begin{proposition}\label{prop:ron-shen-matrix} Let $g\in W(\R^d)$. Let $\Lambda=A\Z^d\times B\Z^d$ be a
full-rank lattice in $\R^{2d}$. For $f,h\in L^2(\R^d)$, define the sequences
\[\mathbf{f}(x):=\{f(x-B^{-T} j):j\in\Z^d\},\quad\mathbf{h}(x):=\{h(x-B^{-T}
k):k\in\Z^d\}.\] Then the following holds:
\begin{equation*}\label{eq:walnut-matrix}
\langle S_{g,\Lambda}f,h\rangle=\int_{B^{-T} [0,1)^d}\langle
\mathbf{G}(x)\mathbf{f}(x),\mathbf{h}(x)\rangle dx
\end{equation*}
for all $f,h\in L^2(\R^d)$. $S_{g,\Lambda}$ is a bounded operator on
$L^2(\R^d)$ if and only if there exists $b>0$ such that $\mathbf{G}(x)\le
bI_{\ell^2}$ for almost all $x\in \R^d$. Also $S_{g,\Lambda}$ is a boundedly
invertible operator on $L^2(\R^d)$ if and only if there exists $a>0$ such that
$\mathbf{G}(x)\ge aI_{\ell^2}$ for almost all $x\in \R^d$.
\end{proposition}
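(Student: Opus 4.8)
The plan is to establish the quadratic-form identity of the first assertion by a direct computation, and then to recognize that identity as exhibiting $S_{g,\Lambda}$ as unitarily equivalent to multiplication by the operator-valued field $\mathbf{G}(x)$; the boundedness and invertibility statements then reduce to standard facts about such multiplication operators.

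First I would expand $\langle S_{g,\Lambda}f,h\rangle=\sum_{p,q\in\Z^d}\langle f,\pi(Ap,Bq)g\rangle\,\langle\pi(Ap,Bq)g,h\rangle$, writing each factor as an integral so that the summand becomes a double spatial integral carrying the modulation factors $e^{\pm 2\pi i\langle Bq,\cdot\rangle}$. Summing over the frequency lattice $q\in\Z^d$ first, Poisson summation applied to $B\Z^d$ turns $\sum_q e^{2\pi i\langle Bq,s-t\rangle}$ into the comb $|\det B|^{-1}\sum_{k}\delta(s-t-B^{-T}k)$, which collapses one of the two spatial integrals and forces $s=t+B^{-T}k$. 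After renaming the spatial lattice index $p$ as $\ell$, the surviving integral over $\R^d$ can be folded onto $B^{-T}[0,1)^d$ by writing $\R^d=\bigsqcup_{j\in\Z^d}(B^{-T}[0,1)^d+B^{-T}j)$; the substitution $t=x-B^{-T}j$, together with the relabelling of the dual-lattice offset, then reassembles precisely the entries $G_{jk}(x)$ and the sampled sequences $\mathbf{f}(x),\mathbf{h}(x)$, yielding the claimed formula. The hard part here is purely analytic: I must justify interchanging the summations with the integrations and the use of Poisson summation, and this is exactly where the hypothesis $g\in W(\R^d)$ enters, since the Wiener norm controls $\sum_\ell|g(\cdot-A\ell)|$ and thereby makes all the series absolutely convergent and the entries $G_{jk}(x)$ well defined with off-diagonal decay in $j-k$.

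For the remaining two assertions I would introduce the fiberization map $U\colon\LtR\to L^2\!\big(B^{-T}[0,1)^d;\ell^2(\Z^d)\big)$, $Uf=\mathbf{f}$. The same tiling $\R^d=\bigsqcup_{j}(B^{-T}[0,1)^d+B^{-T}j)$ gives $\|Uf\|^2=\int_{B^{-T}[0,1)^d}\sum_j|f(x-B^{-T}j)|^2\,dx=\|f\|^2_{\LtR}$, and $U$ is surjective by construction, hence unitary. The first assertion then reads $\langle S_{g,\Lambda}f,h\rangle=\int_{B^{-T}[0,1)^d}\langle\mathbf{G}(x)(Uf)(x),(Uh)(x)\rangle\,dx$, that is, $U S_{g,\Lambda}U^{\ast}$ equals the multiplication operator $M_{\mathbf{G}}$ sending $F(x)\mapsto\mathbf{G}(x)F(x)$. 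Since each $\mathbf{G}(x)$ is a Gramian, it is a positive semidefinite operator on $\ell^2(\Z^d)$.

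Finally I would invoke the standard description of multiplication (decomposable) operators: $M_{\mathbf{G}}$ is bounded if and only if $\esssup_x\|\mathbf{G}(x)\|<\infty$, which for positive operators is the same as the existence of $b>0$ with $\mathbf{G}(x)\le bI_{\ell^2}$ almost everywhere; and $M_{\mathbf{G}}$ is boundedly invertible if and only if $\esssup_x\|\mathbf{G}(x)^{-1}\|<\infty$, which for positive operators means the existence of $a>0$ with $\mathbf{G}(x)\ge aI_{\ell^2}$ almost everywhere, the inverse being realized by the multiplier $x\mapsto\mathbf{G}(x)^{-1}$. Because $U$ is unitary, $S_{g,\Lambda}$ inherits exactly these two properties, which are the second and third assertions.
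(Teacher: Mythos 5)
Your proposal is correct, but note that the paper itself contains no proof of this proposition: it is quoted as a known result, with the citations \cite{Wa92,RS97}, and used as a black box in the appendix. What you have written is essentially a reconstruction of the standard argument from those references. Summing over the modulation lattice $B\Z^d$ first and applying Poisson summation produces the Walnut representation of $S_{g,\Lambda}$ as a sum of weighted translates $T_{B^{-T}k}f$; folding the remaining integral over the tiling $\R^d=\bigcup_{j}\big(B^{-T}[0,1)^d+B^{-T}j\big)$ gives the quadratic-form identity; and the fiberization unitary $U$ identifies $S_{g,\Lambda}$ with the decomposable multiplication operator $F(x)\mapsto \mathbf{G}(x)F(x)$, whose boundedness and bounded invertibility are equivalent to uniform two-sided bounds on the positive semidefinite fibers $\mathbf{G}(x)$. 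This is exactly the Walnut/Ron--Shen route, so there is no methodological divergence to report.

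Two steps deserve more care if the sketch were written out in full, though neither is a conceptual gap. First, the delta-comb form of Poisson summation is only formal; the rigorous version applies Parseval's identity on the torus $\R^d/B^{-T}\Z^d$ to the periodizations of $F_p=f\cdot\overline{T_{Ap}g}$ and $H_p=h\cdot\overline{T_{Ap}g}$, first for bounded, compactly supported $f,h$ and then by density. As you say, the hypothesis $g\in W(\R^d)$ is what makes all series absolutely convergent; in fact it yields $\sum_k\esssup_x|G_{0k}(x)|\lesssim \|g\|^2_W$, which shows that under the standing hypothesis $S_{g,\Lambda}$ is \emph{automatically} bounded, so the boundedness equivalence in the proposition is, strictly speaking, satisfied on both sides for trivial reasons. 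Second, the ``only if'' halves of the last two assertions rest on the localization argument for decomposable operators: using separability of $\ell^2(\Z^d)$, one takes a countable dense family of unit vectors $c_n$, notes that $\|\mathbf{G}(x)\|=\sup_n\langle\mathbf{G}(x)c_n,c_n\rangle$ for the positive semidefinite Gramian, and tests the multiplication operator against fields $\chi_E(x)\,c_n$ supported on positive-measure sets. You invoke these facts as standard, which is legitimate, but they carry the only non-computational content of the second and third assertions and should be cited or spelled out in a complete write-up.
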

Note that $(g,\Lambda)$ is a Gabor frame for $\LtR$ if and only if
$S_{g,\Lambda}$ is bounded and boundedly invertible on $\LtR$.

 {\it Proof of  Proposition~\ref{prop:noexistance-ronshen}.}
Let $\Omega$ be a fundamental domain for $A\Z^d$ and packing set  for
$B^{-T}\Z^d$, $g\in C(\R^d)$ with $\supp g=\Omega$. Let $f\in \LtR$.
Denote by $f_k$ the restriction of $f$ to
$\Omega+Ak$, $k\in\Z^d$. Thus $\|f\|_2^2=\sum_{k\in\Z^d}\|f_k\|_2^2$, where $f_k\in L^2(\Omega+Ak)$.% and can be
%identified with a $\Omega$-periodic function on $\R^d$.

To prove completeness, suppose there exists $f\in \LtR$ such that
\begin{equation}\label{eq:ftsuppg}
\langle f, M_{Bl}T_{Ak}g\rangle =0,\quad  k,l\in\Z^d.\end{equation} However,
because $\supp g=\Omega$, for a fixed $k\in\Z^d$,~\eqref{eq:ftsuppg} is
the Fourier coefficient $(f_k\cdot T_{Ak}g)\widehat{\ }(Bl)$.
%\begin{equation*}
%\langle f, M_{Bl}T_{Ak}g\rangle %&=&\int f(t) e^{-2\pi i Bl\cdot t}g(t-Ak)dt\\
%    =\int_{\Omega}f_k(t)T_{Ak}g(t)e^{-2\pi i Bl\cdot t}dt= \calF(f_k\cdot T_{Ak}g)(Bl)=0
%\end{equation*}
From the Fourier series expansion \[(f_k\cdot T_{Ak}g)(t)=\sum_{l\in\Z^d}
(f_k\cdot T_{Ak}g)\widehat{\ }(Bl)\,e^{2\pi iBl\cdot t}\] we see that $f_k\cdot
T_{Ak}g$ is identically 0 almost everywhere on $\Omega+Ak$. Because
$g$ does not vanish on a subset of $\Omega$ of positive measure,
$f_k=0$ almost everywhere on $\Omega+Ak$ for all $k$. Therefore $f=0$
almost everywhere on $\R^d$ and completeness of $(g,\Lambda)$ is
shown.

Assume now that $(g,A\Z^d\times B\Z^d)$ is a Gabor frame for $\LtR$. We
analyze the structure of the associated cross-ambiguity matrix
$\mathbf{G}(x)$. If $j\neq k$, then
\[\supp g(x-B^{-T} k-Al)g(x-B^{-T} j -Al)\subseteq
Al+[(\Omega+B^{-T} k)\cap(\Omega+B^{-T} j)].\] Since
$m((\Omega+B^{-T} k)\cap(\Omega+B^{-T} j))=0$, for almost all $x$ we
have $G_{kj}(x)=0$ for $j\neq k$. Thus the matrix $\mathbf{G}(x)$ given
by~\eqref{eq:cross-ambig} is diagonal for almost all $x$. Moreover,
\begin{equation*}\label{eq:me}
G_{00}(x)=\sum_{l\in\Z^d}|g(x-Al)|^2=|g(x)|^2,\end{equation*} since $\supp
g=\Omega$. When $\mathbf{c}=\{\delta_{0}(n)\}_{n\in\Z^d}$,
Proposition~\ref{prop:ron-shen-matrix} implies that $a\le\langle \mathbf{G}(x)
\mathbf{c},\mathbf{c}\rangle\le b$, because $S_{g}$ is a bounded and
invertible operator on $L^2(\R^d)$. Therefore, the associated matrix
$\mathbf{G}(x)$ shares these properties for almost every $x$. But then
$\langle\mathbf{G}(x) \mathbf{c},\mathbf{c}\rangle=G_{00}(x)=|g(x)|^2$,
which in turn implies $a\le |g(x)|^2\le b$ on $\Omega$, contradicting the
continuity of $g$ on $\R^d$.\hfill $\square$

%
%: References
\bibliographystyle{alpha}
\bibliography{bibliogr, ../Bibliography/gabor_goetz}

\begin{thebibliography}{GHHK02}

\bibitem[BCHL06]{BCHL06b}
R.~Balan, P.G. Casazza, C.~Heil, and Z.~Landau.
\newblock {Density, overcompleteness, and localization of frames. II: Gabor
  systems.}
\newblock {\em J. Fourier Anal. Appl.}, 12(3):307--344, 2006.

\bibitem[Bek04]{Bek04}
B.~Bekka.
\newblock Square integrable representations, von {N}eumann algebras and an
  application to {G}abor analysis.
\newblock {\em J. Fourier Anal. Appl.}, 10(4):325--349, 2004.

\bibitem[BHW98]{BHW98}
J.~J. Benedetto, C.~Heil, and D.~F. Walnut.
\newblock Gabor systems and the {B}alian-{L}ow theorem.
\newblock In {\em Gabor analysis and algorithms}, Appl. Numer. Harmon. Anal.,
  pages 85--122. Birkh\"auser Boston, Boston, MA, 1998.

\bibitem[Chr03]{C03}
O.~Christensen.
\newblock {\em An introduction to frames and {R}iesz bases}.
\newblock Applied and Numerical Harmonic Analysis. Birkh\"auser Boston Inc.,
  Boston, MA, 2003.

\bibitem[Dau90]{Dau90}
I.~Daubechies.
\newblock The wavelet transform, time-frequency localization and signal
  analysis.
\newblock {\em IEEE Trans. Inform. Theory}, 36(5):961--1005, 1990.

\bibitem[Dau92]{Dau92}
I.~Daubechies.
\newblock {\em Ten lectures on wavelets}, volume~61 of {\em CBMS-NSF Regional
  Conference Series in Applied Mathematics}.
\newblock Society for Industrial and Applied Mathematics (SIAM), Philadelphia,
  PA, 1992.

\bibitem[FG97]{FG97}
H.~G. Feichtinger and K.~Gr{\"o}chenig.
\newblock Gabor frames and time-frequency analysis of distributions.
\newblock {\em J. Funct. Anal.}, 146(2):464--495, 1997.

\bibitem[FK04]{FeiKai}
Hans~G. Feichtinger and Norbert Kaiblinger.
\newblock Varying the time-frequency lattice of {G}abor frames.
\newblock {\em Trans. Amer. Math. Soc.}, 356(5):2001--2023 (electronic), 2004.

\bibitem[Fol89]{Fol89}
G.~B. Folland.
\newblock {\em Harmonic analysis in phase space}, volume 122 of {\em Annals of
  Mathematics Studies}.
\newblock Princeton University Press, Princeton, NJ, 1989.

\bibitem[Fug74]{Fu74}
B.~Fuglede.
\newblock Commuting self-adjoint partial differential operators and a group
  theoretic problem.
\newblock {\em J. Functional Analysis}, 16:101--121, 1974.

\bibitem[FZ98]{FZ98}
Hans~G. Feichtinger and Georg Zimmermann.
\newblock A {B}anach space of test functions for {G}abor analysis.
\newblock In {\em Gabor analysis and algorithms}, Appl. Numer. Harmon. Anal.,
  pages 123--170. Birkh\"auser Boston, Boston, MA, 1998.

\bibitem[GHHK02]{GHHK03}
K.~Gr{\"o}chenig, D.~Han, C.~Heil, and G.~Kutyniok.
\newblock The {B}alian-{L}ow theorem for symplectic lattices in higher
  dimensions.
\newblock {\em Appl. Comput. Harmon. Anal.}, 13(2):169--176, 2002.

\bibitem[Gr{\"o}01]{G01}
K.~Gr{\"o}chenig.
\newblock {\em Foundations of time-frequency analysis}.
\newblock Applied and Numerical Harmonic Analysis. Birkh\"auser Boston Inc.,
  Boston, MA, 2001.

\bibitem[Gr{\"o}10]{GroGa}
K.~Gr{\"o}chenig.
\newblock Multivariate {G}abor frames and sampling of entire functions in
  several variables.
\newblock {\em Preprint}, 2010.

\bibitem[HW01]{HW01}
D.~Han and Y.~Wang.
\newblock Lattice tiling and the {W}eyl-{H}eisenberg frames.
\newblock {\em Geom. Funct. Anal.}, 11(4):742--758, 2001.

\bibitem[HW04]{HW04}
D.~Han and Y.~Wang.
\newblock The existence of {G}abor bases and frames.
\newblock In {\em Wavelets, frames and operator theory}, volume 345 of {\em
  Contemp. Math.}, pages 183--192. Amer. Math. Soc., Providence, RI, 2004.

\bibitem[IKT03]{IKT03}
A.~Iosevich, N.~Katz, and T.~Tao.
\newblock The {F}uglede spectral conjecture holds for convex planar domains.
\newblock {\em Math. Res. Lett.}, 10(5-6):559--569, 2003.

\bibitem[Jan94]{Jan94}
A.~J. E.~M. Janssen.
\newblock Signal analytic proofs of two basic results on lattice expansions.
\newblock {\em Appl. Comput. Harmon. Anal.}, 1(4):350--354, 1994.

\bibitem[Jan03]{Jans}
A.~J. E.~M. Janssen.
\newblock Zak transforms with few zeros and the tie.
\newblock In {\em Advances in {G}abor analysis}, Appl. Numer. Harmon. Anal.,
  pages 31--70. Birkh\"auser Boston, Boston, MA, 2003.

\bibitem[JS02]{JS02}
A.~J. E.~M. Janssen and T.~Strohmer.
\newblock Hyperbolic secants yield {G}abor frames.
\newblock {\em Appl. Comput. Harmon. Anal.}, 12(2):259--267, 2002.

\bibitem[KM06]{KM06}
M.~N. Kolountzakis and M.~Matolcsi.
\newblock Tiles with no spectra.
\newblock {\em Forum Math.}, 18(3):519--528, 2006.

\bibitem[Kol04]{K01}
M.~N. Kolountzakis.
\newblock The study of translational tiling with {F}ourier analysis.
\newblock In {\em Fourier analysis and convexity}, Appl. Numer. Harmon. Anal.,
  pages 131--187. Birkh\"auser Boston, Boston, MA, 2004.

\bibitem[Lue09]{Lue05}
F.~Luef.
\newblock Projective modules over noncommutative tori are multi-window {G}abor
  frames for modulation spaces.
\newblock {\em J. Funct. Anal.}, 257(6):1921--1946, 2009.

\bibitem[Lyu92]{Lyu92}
Yu.~I. Lyubarski{\u\i}.
\newblock Frames in the {B}argmann space of entire functions.
\newblock In {\em Entire and subharmonic functions}, volume~11 of {\em Adv.
  Soviet Math.}, pages 167--180. Amer. Math. Soc., Providence, RI, 1992.

\bibitem[PR10]{PRtr}
G.E. Pfander and P.~Rashkov.
\newblock Window design for multivariate {G}abor frames on lattices.
\newblock Technical Report~21, Jacobs University, 2010.

\bibitem[Rie88]{Rie88}
M.~A. Rieffel.
\newblock Projective modules over higher-dimensional noncommutative tori.
\newblock {\em Canad. J. Math.}, 40(2):257--338, 1988.

\bibitem[RS97]{RS97}
A.~Ron and Z.~Shen.
\newblock Weyl-{H}eisenberg frames and {R}iesz bases in {$L_2(\mathbf R^d)$}.
\newblock {\em Duke Math. J.}, 89(2):237--282, 1997.

\bibitem[SW92]{SW92}
K.~Seip and R.~Wallst{\'e}n.
\newblock Density theorems for sampling and interpolation in the
  {B}argmann-{F}ock space. {II}.
\newblock {\em J. Reine Angew. Math.}, 429:107--113, 1992.

\bibitem[Wal92]{Wa92}
D.~F. Walnut.
\newblock Continuity properties of the {G}abor frame operator.
\newblock {\em J. Math. Anal. Appl.}, 165(2):479--504, 1992.

\bibitem[Y{\i}l03]{Yil03}
{\"O}.~Y{\i}lmaz.
\newblock Coarse quantization of highly redundant time-frequency
  representations of square-integrable functions.
\newblock {\em Appl. Comput. Harmon. Anal.}, 14(2):107--132, 2003.

\end{thebibliography}

\end{document}